\title{Relative Quasiconvexity using Fine Hyperbolic Graphs}
\author[E.Mart\'inez-Pedroza]{Eduardo Mart\'inez-Pedroza}
\address{McMaster University\\ Hamilton, Ontario, Canada L8P 3E9}
\email{emartinez@math.mcmaster.ca}
\author[D.~T.~Wise]{Daniel T. Wise}
\address{McGill University \\  Montreal, Quebec, Canada H3A 2K6 }
\email{wise@math.mcgill.ca}
\newtheorem{thm}{Theorem}[section]    % Standard theorem environment
\newtheorem{lem}[thm]{Lemma}          % Lemma environment with numbering 
\newtheorem{cor}[thm]{Corollary}
\newtheorem{prop}[thm]{Proposition}
\theoremstyle{definition}
\newtheorem{defn}[thm]{Definition}    % Definition environment with 
\newtheorem*{rem}{Remark}             % Unnumbered environment for remarks.
\DeclareMathOperator{\valence}{valence}
\DeclareMathOperator{\dist}{\mathsf{dist}}
\newcommand{\mc}{\mathcal }
\newcommand{\mb}{\mathbb }
\newcommand{\gps}{\widehat \Gamma (\mc G, \mb P, S)}
\newcommand{\N}{\ensuremath{\mathbb{N}}}
\begin{document}

\begin{abstract}    % type your abstract below
We provide a new and elegant approach to relative quasiconvexity for relatively hyperbolic groups in the context of Bowditch's approach to relative hyperbolicity using cocompact actions on fine hyperbolic graphs. Our approach to quasiconvexity generalizes the other definitions in the literature that apply only for countable relatively hyperbolic groups. We also  provide an elementary and self-contained proof that relatively quasiconvex subgroups are relatively hyperbolic.
\end{abstract}

\maketitle

%%%%%%%%%%%%%%%%%%%%   Start of main body of article

\section{Introduction}

Hruska's survey on relatively hyperbolic groups~\cite{HK08}  provides foundational work on equivalent notions of quasiconvexity for countable relatively hyperbolic groups. Almost all characterizations of relative hyperbolicity have a corresponding notion of relatively quasiconvex subgroup~\cite{HK08, MM09, Os06}. However, a definition of relatively quasiconvex subgroup within the framework of relative hyperbolicity defined in terms of a cocompact action on a hyperbolic space had not yet been pursued. In particular,  \cite{HK08} does not examine quasiconvexity in the context of Bowditch's approach to relative hyperbolicity in terms of groups acting cocompactly on fine hyperbolic graphs.

In this paper, we introduce a definition of quasiconvex subgroup in the context of relatively hyperbolic groups acting on fine hyperbolic graphs. Our notion applies for all countable and a class of uncountable relatively hyperbolic groups. We prove that our notion is equivalent to the definitions studied in~\cite{HK08} for countable relatively hyperbolic groups. We also prove that our notion of relatively quasiconvex subgroup implies relative hyperbolicity, extending  one of main results in~\cite{HK08}.  Our approach is conceptually simpler than the previous definitions in the literature, it applies to a broader class of relatively hyperbolic groups than the previous approaches, and we feel it provides a natural viewpoint.

\begin{defn}[Fine Graph]
A \emph{graph} is a  1-dimensional complex. A \emph{circuit} in a graph is an embedded cycle. A graph $\mc K$ is \emph{fine} if each $1$-cell of $\mc K$ is contained in only finitely many circuits of length~$n$ for each $n$.
\end{defn}

The following was introduced by Bowditch~\cite[Def~2]{BO99}, and we refer the reader to \cite{BO99, HK08} for its equivalence with other definitions of relative hyperbolicity for the class of countable groups. Our definition does not assume the group to be countable.
\begin{defn}[Relatively Hyperbolic Group]\label{def:BowRH}
A group $\mc G$ is \emph{hyperbolic relative to a finite collection of subgroups $\mathbb P$} if
$\mc G$ acts (without inversions) on a connected, fine, hyperbolic graph $\mc K$ with finite edge stabilizers, finitely many orbits of edges,  and $\mathbb P$ is a set of representatives of distinct conjugacy classes of vertex stabilizers (such that each infinite stabilizer is represented).

We shall refer to a connected, fine, hyperbolic graph $\mc K$ equipped with such an action as a \emph{$(\mc G, \mb P)$-graph.} Subgroups of $\mc G$ that are conjugate into subgroups in $\mb P$ are \emph{parabolic subgroups}.
\end{defn}

Our definition of relatively quasiconvex subgroup in the context of relatively hyperbolic groups acting on fine hyperbolic graphs is the following:
\begin{defn}[(${\text{Q-0}}$) Relatively Quasiconvex Subgroup]\label{def:ours}
A subgroup $\mc H$ of $\mc G$ is \emph{quasiconvex relative to  $\mb P$} if for some $(\mc G, \mb P)$-graph $\mc K$,
there is a non-empty connected and quasi-isometrically embedded subgraph $\mc L$ of $\mc K$ that is $\mc H$-invariant and has finitely many $\mc H$-orbits of edges.
\end{defn}

Our first main result states that in Definition~\ref{def:ours}, ``for some $(\mc G, \mb P)$-graph" can be replaced by ``for every $(\mc G, \mb P)$-graph", namely,
\begin{thm}\label{thm:main2}
Relative quasiconvexity of Definition~\ref{def:ours} is independent of the choice of $(\mc G, \mb P)$-graph.
\end{thm}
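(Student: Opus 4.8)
The plan is to fix two $(\mc G,\mb P)$-graphs $\mc K$ and $\mc K'$, suppose $\mc H$ is quasiconvex relative to $\mb P$ as witnessed in $\mc K$ by a subgraph $\mc L$, and manufacture a corresponding subgraph of $\mc K'$. Two preliminary remarks organize everything: a connected graph with finitely many $\mc G$-orbits of edges has finitely many $\mc G$-orbits of vertices; and an infinite subgroup fixes at most one vertex of a fine graph with finite edge stabilizers, since two fixed vertices are joined by only finitely many geodesics, so a finite-index subgroup of their common stabilizer fixes an edge and is therefore finite. Consequently the infinite-stabilizer vertices of $\mc K$ and of $\mc K'$ are each canonically the $\mc G$-set $\bigsqcup_i\mc G/P_i$, over the infinite $P_i\in\mb P$, and there is a canonical $\mc G$-equivariant bijection between them. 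The only thing preventing an extension to a global $\mc G$-equivariant map $\mc K\to\mc K'$ is that a finite vertex-stabilizer of $\mc K$ need not fix any vertex of $\mc K'$ --- witness $\mb Z/2$ acting antipodally on an even cycle, which is a legitimate $(\mc G,\mb P)$-graph. So I first enlarge $\mc K'$.

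Let $F_1,\dots,F_s$ represent the finitely many conjugacy classes of finite vertex-stabilizers of $\mc K$. For each $F_j$ that fixes no vertex of $\mc K'$, the group $F_j$ acts on the hyperbolic graph $\mc K'$ with a bounded orbit, and the usual barycentre argument (rounded to a vertex) produces a vertex $w_j$ with $\operatorname{diam}(F_j w_j)$ bounded in terms of the hyperbolicity constant and $|F_j w_j|\ge 2$. Form $\mc N$ from $\mc K'$ by $\mc G$-equivariantly coning off each $\mc G$-translate of each such finite set $F_j w_j$ with a new vertex of valence $|F_j w_j|$. Then $\mc N$ is again a $(\mc G,\mb P)$-graph: it is connected, has finite edge stabilizers and finitely many orbits of edges, is quasi-isometric to $\mc K'$ (new vertices are within distance $1$ of $\mc K'$ and create only bounded detours), hence hyperbolic; it remains fine; and it has no new infinite vertex stabilizer, the stabilizer of a new vertex being the setwise stabilizer of the $\ge 2$-element set it cones off, which is finite by the geodesic argument above. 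Moreover $\mc K'\subseteq\mc N$ is $\mc G$-invariant and quasi-isometrically embedded, each vertex of $\mc N\setminus\mc K'$ is adjacent only to old vertices lying in a set of $\mc K'$-diameter at most $R$, and every finite vertex-stabilizer of $\mc K$ now fixes a vertex of $\mc N$.

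Define a $\mc G$-equivariant combinatorial map $\phi\colon\mc K\to\mc N$ sending each orbit-representative vertex to the canonically matched vertex if its stabilizer is infinite, and to a vertex fixed by its (finite) stabilizer otherwise; extend $\mc G$-equivariantly and then over $1$-cells by geodesic edge-paths. Finiteness of the number of orbits of edges makes $\phi$ Lipschitz. Build symmetrically a \emph{coarsely} $\mc G$-equivariant Lipschitz map $\psi\colon\mc N\to\mc K$, arranged so that $\psi$ inverts $\phi$ on the matched infinite-stabilizer vertices and $\psi\phi$ fixes the chosen orbit representatives of $\mc K$. Both composites $\psi\phi$ and $\phi\psi$ are then coarsely $\mc G$-equivariant, Lipschitz, and equal to the identity on the infinite-stabilizer vertices; since there are only finitely many orbits of vertices, each composite lies a uniformly bounded distance from the identity, so $\phi$ is a $\mc G$-equivariant quasi-isometry. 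Now put $\mc L'=\phi(\mc L)\subseteq\mc N$. As $\phi$ is $\mc H$-equivariant and sends paths to paths, $\mc L'$ is $\mc H$-invariant and connected; as $\mc L$ has finitely many $\mc H$-orbits of edges and $\phi$ sends each edge to a bounded path, $\mc L'$ has finitely many $\mc H$-orbits of edges; and $\mc L'$ is quasi-isometrically embedded in $\mc N$, by chaining $d_{\mc L'}\le C\,d_{\mc L}$ (along $\phi$), $d_{\mc L}\preceq d_{\mc K}$ ($\mc L$ quasi-isometrically embedded in $\mc K$) and $d_{\mc K}\preceq d_{\mc N}$ ($\phi$ a quasi-isometry). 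So $\mc H$ is quasiconvex with respect to $\mc N$.

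Finally I descend from $\mc N$ to $\mc K'$: given any witnessing subgraph $\mc M\subseteq\mc N$, delete the finitely many $\mc H$-orbits of its vertices lying in $\mc N\setminus\mc K'$ and glue in, $\mc H$-equivariantly, for each deleted vertex a bounded connected subgraph of $\mc K'$ spanning its set of former neighbours (which has $\mc K'$-diameter at most $R$); the resulting $\mc M'\subseteq\mc K'$ is $\mc H$-invariant, connected, has finitely many $\mc H$-orbits of edges, and --- being a bounded local modification of $\mc M$, with $\mc K'$ quasi-isometrically embedded in $\mc N$ --- is quasi-isometrically embedded in $\mc K'$. Hence $\mc H$ is quasiconvex with respect to $\mc K'$, and since $\mc K,\mc K'$ were arbitrary the property is graph-independent. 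The main obstacle is the enlargement $\mc K'\rightsquigarrow\mc N$: the fact that finite subgroups may fix no vertex forces the barycentre construction, and one must check with care that coning off the chosen finite sets keeps the graph fine and introduces no infinite vertex stabilizer; the remaining steps are the routine bookkeeping of performing bounded $\mc G$- and $\mc H$-equivariant modifications with controlled orbit counts.
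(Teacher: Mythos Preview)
Your argument is essentially correct but takes a genuinely different route from the paper. The paper never constructs a map between the two $(\mc G,\mb P)$-graphs. Instead it proves (Theorem~\ref{thm:main4}) that any two $(\mc G,\mb P)$-graphs embed $\mc G$-equivariantly and quasi-isometrically into a \emph{common} $(\mc G,\mb P)$-graph, obtained simply by gluing them along the canonical bijection of infinite-stabilizer vertices; this immediately reduces to the case $\mc K_2\subset\mc K_1$. It then builds the witnessing subgraph $\mc L_2\subset\mc K_2$ from scratch using only that $\mc H$ is finitely generated relative to the vertex-stabilizers it meets (Lemma~\ref{lem:initial-complex}), and verifies the quasi-isometric embedding by passing from $\mc L_1$ to $\mc L_2$ via finitely many arc attachments and finite-valence vertex removals (Lemmas~\ref{prop:arc-attachment-2} and~\ref{prop:removals}). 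Your route---enlarge $\mc K'$ to $\mc N$ so that every vertex-stabilizer of $\mc K$ has a fixed point, build a $\mc G$-equivariant vertex map $\phi\colon\mc K\to\mc N$, upgrade it to a quasi-isometry via a coarse inverse, and push $\mc L$ forward---actually yields a $\mc G$-equivariant quasi-isometry between (an enlargement of) the two graphs, which is stronger than anything the paper asserts. The price is more bookkeeping, whereas the paper's common-supergraph trick lets the whole proof run on elementary subgraph operations with no maps at all.

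Two small points. First, the barycentre argument is unnecessary: each $F_j$ is finite, so any vertex $w_j$ gives a finite orbit $F_jw_j$ of some diameter, and there are only finitely many $F_j$. Second, and more substantively, $\phi$ cannot in general be made $\mc G$-equivariant on \emph{edges}: the finite stabilizer of an edge $e$ fixes $\phi(u),\phi(v)$ but need not fix any particular geodesic between them, so ``$\mc L'=\phi(\mc L)$'' is not literally $\mc H$-invariant as written. The fix is easy---replace the image of each edge-orbit representative by the (finite) $\stabilizer(e)$-orbit of your chosen path, or equivalently define $\mc L'$ as the $\mc H$-orbit of a finite connected subgraph of $\mc N$ containing the $\phi$-images of vertex representatives of $\mc L$---but you should say so.
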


\begin{defn}[Finite Relative Generation]
 Let $\mc G$ be a group and  $\mb P$ a finite collection of subgroups of $\mc G$.

A set $S \subset \mc G$ is a \emph{relative generating set} for the pair $(\mc G,\mb P)$ if the set $S \cup \bigcup_{\mc P \in \mb P} \mc P$ is a generating set for $\mc G$ in the standard sense. If there is a finite relative generating set for $(\mc G, \mb P)$, we say that $\mc G$ is \emph{finitely generated relative to} $\mb P$.

A subgroup $\mc H$ of $\mc G$ is \emph{finitely generated relative to $\mb P$} if
there is a finite collection of subgroups $\mb R$ of $\mc H$ such that $\mc H$ is finitely generated relative to $\mb R$ and each subgroup $\mc R \in \mb R$ is conjugate  in $\mc G$ into a subgroup $\mc P \in \mb P$.
\end{defn}

The following is Hruska's version in~\cite{HK08} of Osin's definition of relative hyperbolicity in~\cite{Os06} for countable groups.

\begin{defn}[($\overline{\text{Q-1}}$) Osin Quasiconvex Subgroup in $\overline \Gamma$] \label{def:O}
Suppose that $\mc G$ is hyperbolic relative to $\mb P$  and $S$ is a finite relative generating set for $(\mc G, \mb P)$.
Let $\overline \Gamma = \overline \Gamma (\mc G, S\cup \mb P)$ be the Cayley graph of $\mc G$ with respect to the generating set $S \cup \bigcup_{\mc P \in \mb P} \mc P$, and let $\dist$ be a proper left-invariant metric on $\mc G$.

A subgroup $\mc H$ of $\mc G$ is \emph{quasiconvex relative to  $\mb P$} if there exists a constant $\sigma \geq 0$ such that the following holds: Let $f, g$ be two elements of $\mc H$, and let $P$ be an arbitrary geodesic path from $f$ to $g$ in $\overline \Gamma$.  For any vertex $p$ in $P$, there exists a vertex $h$ in $\mc H$ such that $\dist (p,h) \leq \sigma$.
\end{defn}

Every countable group is a subgroup of a finitely generated group, and therefore a group is countable if and only if it admits a proper left invariant metric. The second main result of the paper is the following equivalence:
\begin{thm}\label{thm:main}
For countable relatively hyperbolic groups, relative quasiconvexity of Definition~\ref{def:ours} is equivalent to relative quasiconvexity of Definition~\ref{def:O}.
\end{thm}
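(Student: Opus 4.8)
The plan is to prove the two implications separately, using Theorem~\ref{thm:main2} to work with whichever $(\mc G,\mb P)$-graph is most convenient on the (Q-0) side. The bridge between the two worlds will be a comparison between the coned-off Cayley graph $\overline\Gamma$ of Definition~\ref{def:O} and a suitable $(\mc G,\mb P)$-graph $\mc K$. First I would recall (or re-derive, since we want to be self-contained) that for a countable relatively hyperbolic group one can take $\mc K$ to be a graph built from $\overline\Gamma$: keep all vertices and $S$-edges of $\overline\Gamma$, and for each coset $g\mc P$ replace the clique on $g\mc P$ by a single ``cone vertex'' $v_{g\mc P}$ joined to every element of $g\mc P$. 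This $\mc K$ is connected, fine, hyperbolic, has finite edge stabilizers, finitely many edge orbits, and its infinite vertex stabilizers are exactly the conjugates of the $\mc P\in\mb P$; so it is a $(\mc G,\mb P)$-graph. The key quantitative point is that the inclusion $\mc G\hookrightarrow \mc K$ (sending $g$ to the corresponding vertex) is a quasi-isometry onto the subgraph $\mc K^{(0)}_{\mc G}$ spanned by the group vertices, with $\dist_{\overline\Gamma}(f,g)$ and $\dist_{\mc K}(f,g)$ comparable up to a factor of $2$, since a path through a clique in $\overline\Gamma$ corresponds to a length-$2$ detour through the cone vertex in $\mc K$, and conversely.

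For the implication (Q-0)$\Rightarrow$(Q-1): assume $\mc H$ is (Q-0)-quasiconvex. By Theorem~\ref{thm:main2} we may take the witnessing graph to be the graph $\mc K$ constructed above, so there is a connected, quasi-isometrically embedded, $\mc H$-invariant subgraph $\mc L\subseteq\mc K$ with finitely many $\mc H$-orbits of edges. Since $\mc L$ has finitely many $\mc H$-orbits of edges and is $\mc H$-invariant, $\mc H$ acts cocompactly on $\mc L$; hence for a fixed group vertex $g_0$ on $\mc L$ the set $\mc L^{(0)}$ lies in a bounded neighborhood of $\mc H\cdot g_0$ and thus, composing with the quasi-isometry $\overline\Gamma\to\mc K$, in a bounded $\dist$-neighborhood of $\mc H$ inside $\mc G$. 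Now given $f,g\in\mc H$ and a geodesic $P$ in $\overline\Gamma$ from $f$ to $g$, push $P$ into $\mc K$: its image is a path from $f$ to $g$ whose length is within a multiplicative/additive constant of $\dist_{\mc K}(f,g)$, hence (by $\mc L\hookrightarrow\mc K$ being a quasi-isometric embedding and connectedness of $\mc L$) every vertex of this image lies uniformly close to $\mc L$, therefore uniformly close in $\overline\Gamma$ to $\mc H$. One must be a little careful because vertices of $P$ may land on cone vertices of $\mc K$, but each cone vertex neighbors a group vertex, so at the cost of $+1$ we stay among group vertices; this yields the constant $\sigma$.

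For the converse (Q-1)$\Rightarrow$(Q-0): assume $\mc H$ is (Q-1)-quasiconvex with constant $\sigma$. Working in $\mc K$ as above, define $\mc L$ to be the subgraph spanned by $\mc H$ together with all vertices and edges lying on geodesics of $\mc K$ between pairs of points of $\mc H$, or more robustly, the subgraph spanned by the $\sigma'$-neighborhood (in $\mc K$) of $\mc H$ for a suitable $\sigma'$ derived from $\sigma$; this is manifestly $\mc H$-invariant. Connectivity follows from the (Q-1) hypothesis, which says geodesics between elements of $\mc H$ fellow-travel $\mc H$: translating to $\mc K$, geodesics in $\mc K$ between group vertices of $\mc H$ stay within a bounded neighborhood of $\mc H$, and any two elements of $\mc H$ are joined by a path inside $\mc L$. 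That $\mc L$ is quasi-isometrically embedded in $\mc K$ is the heart of the matter: one shows a $\mc K$-geodesic between two vertices of $\mc L$ can be replaced, within bounded Hausdorff distance, by a path lying in $\mc L$, using the bounded-coset-penetration / fineness properties of $\mc K$ together with the $\sigma$-bound. Finally, finiteness of the number of $\mc H$-orbits of edges in $\mc L$ follows because $\mc L$ sits in a bounded neighborhood of the orbit $\mc H\cdot v$ for finitely many vertices $v$, each edge of $\mc K$ lies in finitely many circuits (fineness) and $\mc K$ has finitely many edge orbits under $\mc G$, so within any bounded neighborhood of a single $\mc H$-orbit there are only finitely many $\mc H$-orbits of edges.

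The main obstacle I expect is the quasi-isometric embedding claim in the converse direction: translating the ``geodesics stay near $\mc H$'' condition of (Q-1), which is stated in the metric graph $\overline\Gamma$, into a genuine quasi-isometric embedding of the combinatorial subgraph $\mc L$ into the fine graph $\mc K$ requires controlling how geodesics of $\mc K$ interact with the parabolic cone vertices; one needs to know that a $\mc K$-geodesic between two points of $\mc H$ cannot make long unforced excursions, which is where fineness and the structure of vertex stabilizers must be invoked carefully rather than just cited. The other delicate bookkeeping point is ensuring all the constants are uniform and independent of the pair $f,g$, which is routine but must be tracked through the clique-to-cone translation.
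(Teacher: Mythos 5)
Your overall architecture (use the coned-off Cayley graph $\gps$ as the bridging $(\mc G,\mb P)$-graph, transfer the (Q-0) data to it via Theorem~\ref{thm:main2}, then compare with $\overline\Gamma$) matches the paper's, but both implications as you argue them have a genuine gap at the same point: you repeatedly convert closeness in the \emph{graph metric} of $\mc K$ (or of $\overline\Gamma$) into closeness ``to $\mc H$'', whereas Definition~\ref{def:O} and the auxiliary statements require closeness in the \emph{proper left-invariant metric} $\dist$ on $\mc G$. These are wildly different: cone vertices (and the coset cliques of $\overline\Gamma$) have infinite valence, so two group elements at $\mc K$-distance $2$ through a cone vertex can be arbitrarily $\dist$-far apart; in $\overline\Gamma$ every element of a coset $g\mc P$ is at distance $1$ from every other, so ``uniformly close in $\overline\Gamma$ to $\mc H$'' is not the conclusion you need. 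In your forward direction, Morse-type stability only puts the pushed-in geodesic within bounded $\dist_{\mc K}$-distance of $\mc L$, and that gives no $\dist$-bound. The missing ingredient is precisely a bounded-coset-penetration statement: embedded quasigeodesics in $\gps$ with common endpoints have their group vertices uniformly $\dist$-close. The paper proves this as the Strong Fellow Travel Property (Theorem~\ref{thm:BCP}, specialized in Propositions~\ref{prop:BCP} and~\ref{prop:BCP-2}) via the simple-ladder construction and fineness; this is the engine of both directions, and you acknowledge it only in passing, for one direction, without supplying or outlining the argument.

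The second gap is your construction of $\mc L$ in the direction (Q-1)$\Rightarrow$(Q-0). The subgraph spanned by a bounded $\mc K$-neighborhood of $\mc H$ does not, in general, have finitely many $\mc H$-orbits of edges: the $2$-neighborhood of a single group vertex already contains every edge of every coset clique cone through that vertex, and when $\mc H\cap g\mc Pg^{-1}$ is finite (e.g.\ $\mc H$ finite, $\mc P$ infinite) these edges fall into infinitely many $\mc H$-orbits; your closing claim that fineness plus finitely many $\mc G$-orbits of edges rescues this is false, since fineness bounds circuits, not valence. The paper avoids this entirely: it first shows that (Q-1) implies $\mc H$ is \emph{finitely generated relative to} finitely many subgroups $\mc H\cap g\mc Pg^{-1}$ (Proposition~\ref{prop:generation}, resting on the Bounded Intersection Lemma~\ref{lem:quasiorthogonality1} and the Parabolic Approximation Lemma~\ref{lem:parabolic-aprox} --- this is where countability, via the proper metric, genuinely enters), then takes $\mc L$ to be the $\mc H$-orbit of a finite connected subgraph containing $1_{\mc G}$, the relative generating set, and the relevant cone vertices (Lemma~\ref{lem:initial-complex}), which is cocompact by construction; the quasi-isometric embedding of this $\mc L$ is then proved by splitting $\gps$-geodesics into $S$-edges and two-edge shortcuts through cone vertices and bounding each step with Lemma~\ref{lem:parabolic-aprox}. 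Without the fellow-travel theorem in the proper metric and without a cocompact-by-construction $\mc L$, your outline does not close.
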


In~\cite{Os06}, Osin asked whether relative quasiconvexity of Definition~\ref{def:O} implies relative hyperbolicity with respect to the maximal parabolic subgroups.
This question was positively answered by Hruska in~\cite{HK08} using the convergence group approach to relative hyperbolicity and results of Tukia in~\cite{Tu98}.
Evidence of the naturality of  Definition~\ref{def:ours} is that it permits a short and self-contained alternative proof of a more general version of Hruska's result.

\begin{thm}
Let $\mc G$ be hyperbolic relative to $\mb P$. If $ \mc H < \mc G$ is relatively quasiconvex, then $\mc H$ is relatively hyperbolic with respect to a collection of parabolic subgroups of $\mc H$.
\end{thm}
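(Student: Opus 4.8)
The plan is to show that the subgraph $\mc L$ furnished by Definition~\ref{def:ours} is, with its restricted $\mc H$-action, itself an $(\mc H,\mb R)$-graph for a suitable finite collection $\mb R$ of parabolic subgroups of $\mc H$; relative hyperbolicity of $\mc H$ is then immediate from Definition~\ref{def:BowRH}. So fix a $(\mc G,\mb P)$-graph $\mc K$ together with a non-empty, connected, quasi-isometrically embedded, $\mc H$-invariant subgraph $\mc L \subseteq \mc K$ having finitely many $\mc H$-orbits of edges, as provided by Definition~\ref{def:ours}. Restricting the $\mc G$-action gives an action of $\mc H$ on $\mc L$ without inversions, and the remainder of the argument is a sequence of inheritance checks that this action has the properties required of a $(\mc H,\mb R)$-graph.

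First, $\mc L$ is connected by hypothesis, and it is fine because fineness passes to subgraphs: a circuit of length $n$ in $\mc L$ is in particular a circuit of length $n$ in $\mc K$, and each $1$-cell of $\mc K$ lies in only finitely many such circuits. Second, each edge stabilizer of the $\mc H$-action has the form $\mc H \cap \stabilizer_{\mc G}(e)$ for an edge $e$ of $\mc K$, hence is finite, and there are finitely many $\mc H$-orbits of edges by hypothesis. Third, $\mc L$ with its intrinsic path metric is a geodesic metric space and the inclusion $\mc L \hookrightarrow \mc K$ is a quasi-isometric embedding into the hyperbolic geodesic space $\mc K$; since a geodesic metric space that quasi-isometrically embeds into a hyperbolic geodesic space is itself hyperbolic (geodesic triangles in $\mc L$ map to uniformly thin quasi-geodesic triangles in $\mc K$), $\mc L$ is hyperbolic.

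It remains to pin down the parabolic structure. Since $\mc L$ is connected with finitely many $\mc H$-orbits of edges, every vertex of $\mc L$ is an endpoint of some edge (the case where $\mc L$ is a single vertex, in which case $\mc H$ stabilizes that vertex and is itself parabolic, is handled directly), so $\mc L$ has finitely many $\mc H$-orbits of vertices and hence finitely many $\mc H$-conjugacy classes of vertex stabilizers. Let $\mb R$ be a finite set of representatives of these classes that includes every infinite one. Then the $\mc H$-action on $\mc L$ witnesses that $\mc H$ is hyperbolic relative to $\mb R$, and if $\mc R = \stabilizer_{\mc H}(v) \in \mb R$ is infinite then $\stabilizer_{\mc G}(v)$ is infinite, so by Definition~\ref{def:BowRH} it is conjugate in $\mc G$ into some $\mc P \in \mb P$; as $\mc R \leq \stabilizer_{\mc G}(v)$, each such $\mc R$ is a parabolic subgroup of $\mc H$ in the required sense (the finite members of $\mb R$ may be discarded).

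I expect no single step to be a genuine obstacle — the content is really just the observation that $\mc L$ works verbatim as the witnessing graph — but the point demanding the most care is the hyperbolicity of $\mc L$: because $\mc K$ need not be locally finite, one must use the form of the stability of quasi-geodesics valid in non-proper hyperbolic geodesic spaces, and one must keep the intrinsic path metric on $\mc L$ distinct from the metric induced from $\mc K$ (these agree up to quasi-isometry by the hypothesis of Definition~\ref{def:ours}, which is exactly what is used). A secondary point worth stating carefully is that the passage from ``finitely many edge orbits'' to ``finitely many vertex orbits'' uses connectedness of $\mc L$ and the separate treatment of the degenerate single-vertex case.
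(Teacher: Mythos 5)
Your proof is correct and takes essentially the same route as the paper's: the quasiconvexity subgraph $\mc L$ itself, with the restricted $\mc H$-action, is the witnessing graph, with fineness, hyperbolicity (via the quasi-isometric embedding), finite edge stabilizers, and cocompactness all inherited from $\mc K$. Your extra bookkeeping (finitely many vertex orbits, parabolicity of the infinite vertex stabilizers, the single-vertex case) only spells out details the paper's terser argument leaves implicit.
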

\begin{proof}
Let $\mc K$ be a $(\mc G, \mb P)$-graph. Since $\mc H$ is relatively quasiconvex, there is a nontrivial connected and quasi-isometrically embedded subgraph $\mc L \subset \mc K$ which is $\mc H$-invariant and has finitely many $\mc H$-orbits of edges.  Since a subgraph of a fine graph is fine, and a quasi-isometrically embedded subspace of a hyperbolic space is hyperbolic, the graph $\mc L$ is hyperbolic and fine. Since $\mc G$-stabilizers of edges of $\mc K$ are finite, $\mc H$-stabilizers of edges of $\mc L$ are finite. Therefore $\mc H$ is hyperbolic relative to a finite collection of stabilizers of vertices of $\mc L$.
\end{proof}

Another corroboration of the naturality of Definition~\ref{def:ours} is that it allows us to correctly interpret results for countable groups acting on small cancellation complexes in the context of relative hyperbolicity and coherence of groups~\cite{MaWi10}.

\subsection*{Outline:}
The paper consists of three sections. The first section contains the proof of Theorem~\ref{thm:main2}. The second section shows a relation between fellow traveling of quasigeodesics in hyperbolic spaces and the existence of narrow disc diagrams between quasigeodesics. This  relation combined with the notion of fine graphs allows us to deduce a strong fellow travel property for fine hyperbolic graphs admitting cocompact actions.  The last section contains the proof of Theorem~\ref{thm:main}.

\subsection*{Acknowledgments:}
We thank the referee for critical corrections, and Inna Bumagin for useful comments. The first author acknowledges the support of the Geometry and Topology group at McMaster University through a Postdoctoral Fellowship, and partial support of the Centre de Recherches Math\' ematiques in Montreal to attend some of the Fall-2010 events during which part of this paper was prepared. The second author's research is supported by NSERC.

\section{Independence of Quasiconvexity} \label{sec:fine-graphs}

In this section, we prove that Definition~\ref{def:ours} is independent of the $(\mc G, \mb P)$-graph. This is restated in this section as Theorem~\ref{thm:movings}.
The proof is based on Theorem~\ref{thm:main4} which is a result on equivariant embeddings between $(\mc G, \mb P)$-graphs.

\subsection{Preliminary results}

The results on fine graphs  discussed below essentially all appeared  in the work of Bowditch~\cite{BO99}. We provide proofs for the convenience of the reader.

\begin{lem}\label{lem:finite-paths}
Let $\mc K$ be a graph. The following statements are equivalent.
\begin{enumerate}
\item $\mc K$ is fine.
\item  For each integer $n>0$, and any pair of vertices $u, v$ of $\mc K$, there are only finitely many embedded paths of length $n$ between $u$ and $v$.
\end{enumerate}
 \end{lem}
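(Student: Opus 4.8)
The plan is to prove the two implications separately, using that an embedded path between two vertices, together with an embedded path between the same two vertices, can be spliced into circuits.

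\textbf{(2) $\Rightarrow$ (1).} Suppose (2) holds and fix a $1$-cell $e = [u,v]$ and an integer $n$. I want to bound the number of circuits of length $n$ through $e$. Any such circuit is the union of $e$ with an embedded path of length $n-1$ from $u$ to $v$ that avoids $e$; by (2) applied to the pair $u,v$ and the integer $n-1$, there are only finitely many such paths, hence only finitely many circuits of length $n$ through $e$. So $\mc K$ is fine.

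\textbf{(1) $\Rightarrow$ (2).} This is the substantive direction. Assume $\mc K$ is fine, fix vertices $u,v$ and an integer $n$, and suppose for contradiction that there are infinitely many embedded paths of length $n$ from $u$ to $v$. I would argue by induction on $n$. The base case $n=1$ is trivial since $\mc K$ is a graph (at most, or a bounded number of, edges between $u$ and $v$; in a simplicial setting exactly one). For the inductive step, fix one embedded path $P_0$ of length $n$ from $u$ to $v$. Given any other embedded path $P$ of length $n$ from $u$ to $v$, consider the union $P_0 \cup P$. The idea is to extract a circuit: let $w$ be the last vertex along $P$ (starting from $u$) that still lies on $P_0$ before $P$ and $P_0$ first diverge, and let $w'$ be the next vertex of $P$ that returns to $P_0$; the subpath of $P$ between $w$ and $w'$ together with the subpath of $P_0$ between $w$ and $w'$ forms a circuit whose length is at most $2n$. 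If infinitely many of the $P$'s produce circuits sharing a common edge of $P_0$, fineness of $\mc K$ bounds the number of such circuits of each length $\le 2n$, hence bounds the number of such subpaths of $P$ off $P_0$, forcing infinitely many $P$'s to agree on a strictly longer initial segment with $P_0$ — and then one recurses / applies the inductive hypothesis to the shorter remaining segment to reach a contradiction. Since $P_0$ has only $n$ edges, by pigeonhole some edge of $P_0$ is shared by infinitely many of these circuits, which is the contradiction with fineness.

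\textbf{Main obstacle.} The delicate point is organizing the bookkeeping in (1) $\Rightarrow$ (2): an arbitrary embedded path $P$ may diverge from and re-converge to $P_0$ many times, so one cannot immediately read off a single circuit. The cleanest route is probably to induct on $n$ and, at each stage, use a pigeonhole argument to pass to an infinite subfamily of paths that share an initial edge (or initial segment) with a fixed path, then peel it off and apply the inductive hypothesis to the remaining shorter paths between the new pair of endpoints; fineness enters precisely to guarantee that only finitely many choices of "first detour" are possible. I expect the author's proof to handle this with a similar induction, possibly phrased in terms of circuits through a fixed edge rather than paths between a fixed pair of vertices.
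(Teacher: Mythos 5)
Your (2)$\Rightarrow$(1) direction is exactly the paper's: length-$n$ circuits through an edge $e$ with endpoints $u,v$ correspond to embedded length-$(n-1)$ paths from $u$ to $v$ avoiding $e$. For (1)$\Rightarrow$(2) you take a genuinely different route. The paper avoids induction altogether: given infinitely many distinct embedded length-$n$ paths $P_i$ from $u$ to $v$, it considers the symmetric difference $P_1\triangle P_i$. Every vertex has even degree in it, so it decomposes into edge-disjoint circuits; no such circuit can lie entirely inside the embedded path $P_i$, so each one contains an edge of $P_1$, and each has length at most $2n$. By fineness only finitely many circuits of length at most $2n$ pass through the $n$ edges of $P_1$, so only finitely many unions of such circuits can occur; since $P_i=P_1\triangle(P_1\triangle P_i)$, this bounds the number of distinct $P_i$, a contradiction. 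This one-step argument is what replaces your induction-plus-pigeonhole on first detours. Your scheme does work (pass to infinitely many $P$ sharing the same first detour from $w$ to $w'$, then apply the inductive hypothesis to the tails from $w'$ to $v$, which are distinct shorter embedded paths between a fixed pair of vertices), but it requires exactly the bookkeeping you flag as delicate, which the symmetric-difference trick dissolves.

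Two points in your sketch need repair if you carry it out. First, the base case $n=1$ is not ``trivial since $\mc K$ is a graph'': a graph here is a $1$-complex, which may a priori have infinitely many parallel edges between $u$ and $v$, so you must invoke fineness (each such edge would lie in infinitely many length-$2$ circuits), or start the induction at $n=0$. Second, your closing sentence --- that by pigeonhole some edge of $P_0$ lies on infinitely many of these circuits, contradicting fineness --- is a non sequitur: the circuits produced by different paths $P$ need not be distinct, so infinitely many paths yielding circuits through one edge of $P_0$ does not yield infinitely many distinct circuits through that edge. Fineness enters the other way, as you in fact say earlier: it makes the set of possible first detours finite, and the contradiction then comes from the inductive hypothesis applied to the tails, not from fineness directly.
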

\begin{proof}
Suppose that $\mc K$ is fine, $n>0$, and $u,v \in \mc K$. Suppose that  $\{ P_i : i \in \N  \}$ is a collection of distinct embedded length~$n$ paths between $u$ and $v$.  For each $i>1$, the (closure of the) symmetric difference of  $P_1 \triangle P_i$ consists of a collection of embedded cycles each of which has an edge in $P_1$.
As all these cycles have length $\leq n$, we arrive a contradiction with the fineness of $\mc K$.

For the other direction, notice that length~$n$ circuits containing an edge $e$ with endpoints $u,v$, are in bijective correspondence with the embedded paths of length $n-1$ between $u,v$ that do not contain the edge $e$.
\end{proof}

\begin{lem}[Almost Malnormal]\label{lem:malnormality}
Let $\mc G$ act on a fine graph $\mc K$ with finite edge stabilizers.
For vertices $u,v$ the intersection $\mc G_u \cap \mc G_v$ is finite unless $u=v$.
\end{lem}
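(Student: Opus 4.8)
The plan is to argue by contradiction: suppose $u \neq v$ and $\mc G_u \cap \mc G_v$ is infinite. Since $\mc K$ is connected, fix an embedded path $P$ joining $u$ to $v$; let $n$ be its length. Every element $g \in \mc G_u \cap \mc G_v$ carries $P$ to another embedded path $gP$ of length $n$ between $u$ and $v$, because $g$ fixes both endpoints. By Lemma~\ref{lem:finite-paths}, the fineness of $\mc K$ guarantees that there are only finitely many embedded paths of length $n$ between $u$ and $v$. Hence the orbit $\{gP : g \in \mc G_u \cap \mc G_v\}$ is finite, so the subgroup $\mc G_u \cap \mc G_v$ acts on this finite set of paths with finite-index point stabilizers.

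The key step is then to identify the stabilizer of the path $P$ under this action with an edge stabilizer. If $g \in \mc G_u \cap \mc G_v$ fixes the path $P$ setwise, then $g$ must fix $P$ pointwise: the endpoints $u,v$ are fixed, and since $P$ is an embedded path, the only automorphism of $P$ fixing both ends is the identity (here I use that $\mc G$ acts without inversions, so $g$ cannot flip an edge of $P$). In particular $g$ fixes each edge of $P$, so the setwise stabilizer of $P$ in $\mc G_u \cap \mc G_v$ is contained in $\mc G_e$ for any edge $e$ of $P$, hence is finite by hypothesis. Combining this with the previous paragraph, $\mc G_u \cap \mc G_v$ has a finite-index finite subgroup and is therefore finite, contradicting the assumption.

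I expect the main (minor) obstacle to be the bookkeeping in the case $n=0$ or when $P$ is a single edge, and making precise why fixing the endpoints of an embedded path forces fixing it pointwise — this is where the hypothesis that $\mc G$ acts without inversions is essential, since otherwise an element could reverse a length-one path. Everything else is a direct application of Lemma~\ref{lem:finite-paths} together with the finite-edge-stabilizer hypothesis, so no further input is needed.
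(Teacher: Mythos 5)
Your proof is correct and follows essentially the same route as the paper's: take an embedded path $P$ from $u$ to $v$, use Lemma~\ref{lem:finite-paths} to see that the orbit of $P$ under $\mc G_u\cap\mc G_v$ is finite, and conclude that an infinite intersection would force an infinite (path, hence edge) stabilizer, contradicting finiteness of edge stabilizers. The only cosmetic difference is that you invoke the ``without inversions'' hypothesis to get pointwise fixing of $P$, which is not actually needed here: since both endpoints $u,v$ are already fixed, any simplicial self-map of the embedded path fixing its ends is the identity on it.
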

\begin{proof}
Suppose that $u\neq v$ and $\mc M = \mc G_u \cap \mc G_v$ is an infinite subgroup.
Let $P$ be an embedded path from $u$ to $v$. By Lemma~\ref{lem:finite-paths},  there are only finitely many $\mc M$-translates of $P$. Since $\mc M$ is assumed to be infinite, the path $P$ has an infinite $\mc G$-stabilizer. In particular, there is an edge with infinite $\mc G$-stabilizer, and this contradicts that $\mc K$ has finite $\mc G$-stabilizers of edges.
\end{proof}

\begin{lem}[Infinite valence $\Leftrightarrow $ Infinite stabilizer]\label{lem:infinite-stabilizers}
Let $\mc G$  act cocompactly on a graph $\mc K$ with finite edge stabilizers.
Then a vertex $v \in \mc K$ has infinite valence if and only if  its stabilizer $\mc G_v$ is infinite.
\end{lem}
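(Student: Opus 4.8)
The plan is to prove both directions of the equivalence ``$v$ has infinite valence $\Leftrightarrow$ $\mc G_v$ is infinite'' separately, using cocompactness of the action to pass between the local structure at a vertex and its orbit.

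\textbf{The easy direction.} Suppose $\mc G_v$ is infinite. Since the action is cocompact, there are only finitely many $\mc G$-orbits of edges, so in particular only finitely many $\mc G_v$-orbits of edges incident to $v$ (as $\mc G_v$ preserves the star of $v$). If every such orbit were finite, then the star of $v$ would be finite, i.e.\ $v$ would have finite valence, but also the stabilizer in $\mc G_v$ of each incident edge would have finite index in $\mc G_v$; intersecting finitely many such finite-index subgroups gives a finite-index subgroup of $\mc G_v$ that fixes the whole star of $v$ pointwise, hence fixes an edge, hence is finite by the finite-edge-stabilizer hypothesis. Then $\mc G_v$ is finite, a contradiction. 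So some orbit of incident edges is infinite, and $v$ has infinite valence. (A cleaner phrasing: if $v$ had finite valence, the kernel of the $\mc G_v$-action on the finite star of $v$ has finite index and is contained in an edge stabilizer, hence finite, forcing $\mc G_v$ finite.)

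\textbf{The main direction.} Suppose $v$ has infinite valence; we must show $\mc G_v$ is infinite. Consider the infinitely many edges $e_1, e_2, \dots$ incident to $v$. Since there are finitely many $\mc G$-orbits of edges, infinitely many of these edges, say (after relabeling) $\{e_i\}_{i \in \N}$, lie in a single $\mc G$-orbit: there are elements $g_i \in \mc G$ with $g_i e_1 = e_i$. The endpoints of $e_1$ are $v$ and some vertex $w$; write $u_i$ for the endpoint of $e_i$ other than $v$. Now I want to produce infinitely many elements of $\mc G_v$. The obstacle is that the $g_i$ need not fix $v$ — they only move $e_1$ to an edge incident to $v$. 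The key observation is that $g_i$ sends the pair $\{v,w\}$ to the pair $\{v, u_i\}$, so either $g_i v = v$ (good, $g_i \in \mc G_v$), or $g_i v = u_i$ and $g_i w = v$. In the latter case $g_i$ does not obviously lie in $\mc G_v$, so this is where the real argument is needed.

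\textbf{Resolving the obstacle.} To handle the case $g_i w = v$: note that then $g_i^{-1} v = w$, so $g_i^{-1}$ maps the star of $v$ to the star of $w$. Since the star of $v$ is infinite, the star of $w$ is infinite too. Iterating, every vertex in the $\mc G$-orbit of $v$ or of $w$ has infinite valence. Rather than chase this, here is a direct approach: among infinitely many edges $e_i$ incident to $v$ all in one orbit, consider their other endpoints $u_i$. If infinitely many $u_i$ coincide, say $u_i = u$ for $i \in I$ infinite, then for $i, j \in I$ the element $g_j g_i^{-1}$ fixes the edge-pair structure and we can extract infinitely many distinct elements either fixing $v$ or fixing $u$; by Lemma~\ref{lem:malnormality} (almost malnormality) an infinite subgroup can fix at most one vertex, and combined with finite edge stabilizers this forces $\mc G_v$ (or $\mc G_u$, hence by the orbit argument $\mc G_v$) infinite. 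If instead the $u_i$ are infinitely many distinct vertices, then the elements $g_i$ with $g_i e_1 = e_i$: for pairs $i,j$ with $g_i v = v$ and $g_j v = v$ we directly get distinct elements of $\mc G_v$ unless finitely many; the complementary case $g_i v = u_i$ forces the $g_i$ pairwise distinct (distinct images of $v$) with $g_i w = v$, whence $g_i^{-1}$ are distinct elements sending $w \mapsto v$, giving infinitely many edges incident to $w$ in one orbit with a \emph{repeated} far endpoint $v$ — reducing to the previous subcase with the roles of $\{v,w\}$ swapped and ``repeated endpoint'' holding. Thus in all cases we locate a vertex with infinite stabilizer in the common orbit of $v$, and since stabilizers of vertices in a common orbit are conjugate, $\mc G_v$ is infinite. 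I expect the bookkeeping of these subcases — rather than any deep idea — to be the main obstacle; Lemmas~\ref{lem:finite-paths} and~\ref{lem:malnormality} together with finiteness of edge orbits are the only tools needed.
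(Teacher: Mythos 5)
Your easy direction is fine: the ``cleaner phrasing'' you give at the end (if $v$ had finite valence, the kernel of the $\mc G_v$-action on the finite star lies in an edge stabilizer, hence is finite, forcing $\mc G_v$ finite) is exactly the paper's argument for that implication, and it does not rely on your earlier, unjustified parenthetical claim that there are only finitely many $\mc G_v$-orbits of edges at $v$ (that claim is true, but it needs the oriented-edge counting argument, not just ``$\mc G_v$ preserves the star'').

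The main direction, however, has a genuine gap in the ``complementary case''. From infinitely many distinct $g_i$ with $g_ie_1=e_i$, $g_iv=u_i\neq v$ and $g_iw=v$ (note it is $g_i$, not $g_i^{-1}$, that sends $w$ to $v$), you assert that one obtains ``infinitely many edges incident to $w$ in one orbit with a repeated far endpoint $v$'', i.e.\ infinitely many edges joining $w$ to $v$. Nothing in the data produces such edges: the only edge between $v$ and $w$ you can certify is $e_1$ itself (the translates $g_i^{-1}e_j$ are incident to $w$, but their far endpoints $g_i^{-1}u_j$ need not be $v$), so the reduction to the ``repeated endpoint'' subcase fails as written. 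The appeal to Lemma~\ref{lem:malnormality} is also out of bounds: that lemma assumes the graph is fine, which is not a hypothesis of the present statement, and without fineness ``an infinite subgroup fixes at most one vertex'' is false (e.g.\ $\mathbb{Z}$ freely permuting infinitely many parallel edges between two fixed vertices $a,b$; this action is cocompact with trivial edge stabilizers, and $\mc G_a=\mc G_b=\mathbb{Z}$). Fortunately neither is needed: in your complementary case $g_i^{-1}v=w$ and $g_jw=v$, so $g_jg_i^{-1}\in\mc G_v$ for all indices $i,j$ in that infinite set, and for fixed $i$ these elements are pairwise distinct since they send $e_i$ to the distinct edges $e_j$; hence $\mc G_v$ is infinite. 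With this two-line observation the entire dichotomy on whether the $u_i$ repeat becomes unnecessary, only finiteness of the number of edge orbits is used for this direction (matching the paper, which reserves finite edge stabilizers for the converse), and you recover what is presumably the argument behind the paper's one-sentence proof.
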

\begin{proof}
Since there are only finitely many $\mc G$-orbits of edges, if $v$  has infinite valence, then $v$ infinite stabilizer.
Conversely, since $\mc G$-stabilizers of edges  are finite, if $v$ has infinite stabilizer, then $v$ has infinite valence.
\end{proof}

\begin{lem}[Infinite Valence Vertices are Canonical]\label{prop:bijection}
Let $\mc G$ be hyperbolic relative to a collection of subgroups $\mb P$, and let $\mc K$ be a $(\mc G, \mb P)$-graph.
Let $\mb P_\infty$ be the subcollection of $\mb P$ consisting of infinite subgroups, and let $V_\infty (\mc K)$ be the set of infinite valence vertices of $\mc K$.
There is a natural $\mc G$-equivariant bijection
\[ V_\infty (\mc K)   \longrightarrow   \left \{  g\mc Pg^{-1}\ : \ g\in \mc G,\  \mc P \in \mb P_\infty  \right \} \]
that maps a vertex $v$ to its $\mc G$-stabilizer $\mc G_v$.
\end{lem}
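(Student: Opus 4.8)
The plan is to verify directly that the assignment $v \mapsto \mc G_v$ is well-defined into the indicated set of conjugates, is $\mc G$-equivariant, and is a bijection; each of these reduces quickly to the three preceding lemmas together with the defining properties of a $(\mc G,\mb P)$-graph. Note first that since $\mc G$ acts by graph automorphisms it preserves valences, so $V_\infty(\mc K)$ is $\mc G$-invariant and the statement makes sense; also, on a connected graph finitely many orbits of edges forces finitely many orbits of vertices (every vertex incident to an edge is the endpoint of a translate of one of finitely many edges, and an isolated vertex can only occur for the trivial one-vertex graph), so the action is cocompact and Lemma~\ref{lem:infinite-stabilizers} applies.

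I would first check the map is well-defined. By Lemma~\ref{lem:infinite-stabilizers}, a vertex $v$ lies in $V_\infty(\mc K)$ precisely when $\mc G_v$ is infinite. By Definition~\ref{def:BowRH}, $\mb P$ is a set of representatives of the conjugacy classes of vertex stabilizers with every infinite vertex stabilizer represented; hence for $v \in V_\infty(\mc K)$ there are $g \in \mc G$ and $\mc P \in \mb P$ with $\mc G_v = g\mc P g^{-1}$, and $\mc P$ is infinite because $\mc G_v$ is, so $\mc P \in \mb P_\infty$. Thus $\mc G_v$ lies in the target set $\{\, g\mc Pg^{-1} : g \in \mc G,\ \mc P \in \mb P_\infty\,\}$.

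Next I would record that $\mc G$-equivariance is just the standard identity $\mc G_{gv} = g\,\mc G_v\,g^{-1}$ for group actions, which matches the conjugation action on the target. Injectivity is immediate from Lemma~\ref{lem:malnormality}: if $u,v \in V_\infty(\mc K)$ satisfy $\mc G_u = \mc G_v$, then $\mc G_u \cap \mc G_v = \mc G_u$ is infinite, which by almost malnormality forces $u = v$. For surjectivity, fix $\mc P \in \mb P_\infty$; each element of $\mb P$ is a vertex stabilizer, so $\mc P = \mc G_w$ for some vertex $w$, and $\mc G_w$ is infinite, whence $w \in V_\infty(\mc K)$ by Lemma~\ref{lem:infinite-stabilizers}. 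Then for any $g \in \mc G$ the vertex $gw$ lies in $V_\infty(\mc K)$ and maps to $\mc G_{gw} = g\mc Pg^{-1}$, so every element of the target set is attained.

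I do not anticipate a serious obstacle: the only points needing a word of care are that ``finitely many orbits of edges'' on a connected graph supplies the cocompactness required to invoke Lemma~\ref{lem:infinite-stabilizers}, and that the clause ``each infinite stabilizer is represented'' in Definition~\ref{def:BowRH} is exactly what makes the map both land in and exhaust the conjugates of members of $\mb P_\infty$. The degenerate one-vertex case is vacuous, since then $\mc G$ is finite and both sides are empty.
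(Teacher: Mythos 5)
Your argument is correct and follows essentially the same route as the paper: well-definedness and surjectivity come from Lemma~\ref{lem:infinite-stabilizers} together with the defining requirement that every infinite vertex stabilizer is conjugate to a member of $\mb P$, injectivity comes from Lemma~\ref{lem:malnormality}, and equivariance is the standard identity $\mc G_{gv}=g\,\mc G_v\,g^{-1}$ that the paper leaves implicit. (Your parenthetical claim that in the degenerate one-vertex case $\mc G$ must be finite is not actually justified, but that corner case lies equally outside the paper's own treatment and does not affect the argument.)
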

\begin{proof}
\emph{Range of the map is well-defined.}  By Lemma~\ref{lem:infinite-stabilizers}, if a vertex $v$ has infinite valence, then $v$ has infinite stabilizer. By definition of $(\mc G, \mb P)$-graph, if $v$  has infinite stabilizer, then  $\mc G_v = g\mc Pg^{-1}$ for some $g\in \mc G$ and $\mc P \in \mb P_\infty$.

\emph{Surjectivity.} Every subgroup of the form $g\mc Pg^{-1}$ for  $g\in \mc G$ and $\mc P \in \mb P_\infty$ is the $\mc G$-stabilizer of a vertex $v$ of $\mc K$.
In this case $v$ has infinite $\mc G$-stabilizer and hence  Lemma~\ref{lem:infinite-stabilizers} implies that  $v$ has infinite valence.

\emph{Injectivity.} Follows from Lemma~\ref{lem:malnormality}.
\end{proof}

The following Corollary of Lemma~\ref{prop:bijection} is easily obtained directly.
\begin{cor}\label{cor:infinite-valence-2}
Let $\mc G$ be a hyperbolic group relative to a collection of subgroups $\mb P$, and let $\mc K_2 \hookrightarrow  \mc K_1$ be a $\mc G$-equivariant embedding of  $(\mc G, \mb P)$-graphs. Then every infinite valence vertex of $\mc K_1$ is in $\mc K_2$.
\end{cor}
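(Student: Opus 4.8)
The plan is to chase stabilizers through Lemma~\ref{prop:bijection} and Lemma~\ref{lem:malnormality}. Fix an infinite valence vertex $v$ of $\mc K_1$. Applying the ``range is well-defined'' part of Lemma~\ref{prop:bijection} to the $(\mc G,\mb P)$-graph $\mc K_1$, we get that $\mc G_v = g\mc Pg^{-1}$ for some $g \in \mc G$ and some infinite $\mc P \in \mb P_\infty$.

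Next I would produce a vertex of $\mc K_2$ with that same stabilizer. Applying the surjectivity part of Lemma~\ref{prop:bijection} to the $(\mc G,\mb P)$-graph $\mc K_2$ yields a vertex $w$ of $\mc K_2$ with $\mc G_w = g\mc Pg^{-1} = \mc G_v$. Since the embedding $\mc K_2 \hookrightarrow \mc K_1$ is $\mc G$-equivariant, the image of $w$ in $\mc K_1$, which I continue to denote $w$, still satisfies $\mc G_w = \mc G_v$.

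Finally I would identify $w$ with $v$ inside $\mc K_1$. Both are vertices of $\mc K_1$, which is fine with finite edge stabilizers, and $\mc G_w \cap \mc G_v = \mc G_v$ is infinite; hence the almost malnormality of Lemma~\ref{lem:malnormality} forces $w = v$. Therefore $v$ lies in the image of $\mc K_2$, which is the assertion. I do not expect any real obstacle here: the argument is a short diagram chase, and the only thing worth a word of care is keeping straight which graph each instance of Lemma~\ref{prop:bijection} and Lemma~\ref{lem:malnormality} is being applied to (namely $\mc K_2$ for surjectivity and $\mc K_1$ for almost malnormality), together with the fact that $\mc G$-equivariance of the embedding preserves vertex stabilizers.
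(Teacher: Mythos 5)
Your argument is correct and is essentially the paper's own: the paper derives the corollary from Lemma~\ref{prop:bijection} exactly as you do, and the same chain (infinite valence $\Rightarrow$ stabilizer $g\mc Pg^{-1}$ with $\mc P\in\mb P_\infty$, a vertex $w\in\mc K_2$ with that stabilizer, then Lemma~\ref{lem:malnormality} forcing $w=v$) appears verbatim in the proof of Theorem~\ref{thm:movings}. Your care about equivariance preserving stabilizers under the embedding is the only point the paper leaves implicit, and you handle it correctly.
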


\begin{defn}[Equivariant Arc Attachment]
Let $\mc J$ be a graph admitting an action of a group $\mc G$, let $\mc K$ be a subgraph of a graph $\mc J$, and let $P$ be a path in $\mc J$.
The \emph{$\mc G$-attachment of the arc $P$ to $\mc K$} means forming the  new subgraph
\[\mc K' = \mc K \cup  \bigcup_{g\in \mc G} gP.\]
\end{defn}

\begin{lem}[Arc Attachment Preserves Coarse Geometry]  \label{prop:arc-attachment-2}
Let $\mc G$ act on a graph $\mc J$, and let $\mc K$ be a connected $\mc G$-invariant subgraph of $\mc J$.
Suppose $\mc K'$ is obtained from $\mc K$ by a $\mc G$-attachment of an arc $P$ with at least one of its endpoints in $\mc K$.
Then the inclusion $\mc K \subset \mc K'$ is a quasi-isometry. In particular, if $\mc K$ is hyperbolic, then $\mc K'$ is hyperbolic.
\end{lem}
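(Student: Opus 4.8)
The plan is to show that the inclusion $\mc K \hookrightarrow \mc K'$ is a quasi-isometry by constructing a coarse Lipschitz retraction $r \colon \mc K' \to \mc K$ and checking that it moves points a bounded amount. Since $\mc K$ is already connected and $\mc G$-invariant, and $\mc K'$ differs from $\mc K$ only by the $\mc G$-orbit of the single arc $P$, the point is that every new vertex lies on some translate $gP$ and hence is within $\tfrac{1}{2}\,\mathrm{length}(P)$ (in the $\mc K'$-metric) of an endpoint of $gP$; by hypothesis at least one endpoint of $P$, say $p_0$, lies in $\mc K$, so at least one endpoint of $gP$ lies in $\mc K$. Thus I would define $r$ to fix $\mc K$ and send each interior vertex $x$ of a translate $gP$ to the nearest endpoint of $gP$ lying in $\mc K$ (choosing consistently, e.g.\ always $gp_0$), so that $\dist_{\mc K'}(x, r(x)) \le \mathrm{length}(P)$.

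First I would record the easy direction: the inclusion is $1$-Lipschitz, since $\mc K$ is a subgraph of $\mc K'$, so $\dist_{\mc K'}(u,v) \le \dist_{\mc K}(u,v)$ for all vertices $u,v \in \mc K$. Next I would establish the reverse inequality. Given an edge-path $\gamma$ in $\mc K'$ between two vertices of $\mc K$, I decompose it into maximal subpaths lying entirely in translates of $P$ and maximal subpaths lying in $\mc K$; replacing each maximal ``excursion'' along a translate $gP$ (which enters and leaves $\mc K$, or begins/ends at a vertex of $\mc K$) by a path in $\mc K$ of controlled length. Concretely, an excursion along $gP$ connects two points of $\mc K$ that each lie within $\mathrm{length}(P)$ of the single $\mc K$-endpoint $gp_0$ of $gP$ — but one must be a bit careful: an excursion need not reach all the way across $gP$, and if both of its endpoints happen to be $\mc K$-vertices only because they coincide with interior-of-$gP$ vertices that also lie in $\mc K$, the bound still holds because those vertices are within $\mathrm{length}(P)$ of $gp_0 \in \mc K$. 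Summing, if $\gamma$ has $k$ maximal excursions and total length $n$, I obtain a path in $\mc K$ of length at most $n + 2k\,\mathrm{length}(P) \le (1 + 2\,\mathrm{length}(P))\, n$, giving $\dist_{\mc K}(u,v) \le (1 + 2\,\mathrm{length}(P))\,\dist_{\mc K'}(u,v)$. Together with the $1$-Lipschitz bound and the fact that every vertex of $\mc K'$ is within $\mathrm{length}(P)$ of $\mc K$, this shows $\mc K \hookrightarrow \mc K'$ is a quasi-isometry. The final sentence is then immediate: quasi-isometric spaces are hyperbolic together, since Gromov hyperbolicity is a quasi-isometry invariant.

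The main obstacle I anticipate is purely bookkeeping: handling the various ways a geodesic (or arbitrary edge-path) of $\mc K'$ can interact with the translates of $P$ — in particular excursions that traverse only part of a translate, excursions that start or end at the non-$\mc K$ endpoint of $P$ being impossible to avoid, and the possibility that interior vertices of translates of $P$ already belong to $\mc K$. None of these cause a real difficulty since the crude bound ``each interior vertex of $gP$ is within $\mathrm{length}(P)$ of $gp_0 \in \mc K$ inside $\mc K'$, hence we can shortcut'' always applies; the only care needed is to make the constant explicit and to note that $\mathrm{length}(P)$ is finite. It is worth emphasizing that no fineness, hyperbolicity, or cocompactness of the action is used in the quasi-isometry statement — only connectedness and $\mc G$-invariance of $\mc K$ and that $P$ has an endpoint in $\mc K$ — and hyperbolicity of $\mc K'$ then follows formally from hyperbolicity of $\mc K$.
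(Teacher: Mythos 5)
There is a genuine gap in the central replacement step, and it is a matter of which metric you are using rather than mere bookkeeping. When you replace a maximal excursion along a translate $g P$ by a ``shortcut,'' the shortcut must be a path \emph{in $\mc K$}, but your justification --- ``each interior vertex of $gP$ is within $\mathrm{length}(P)$ of $gp_0\in\mc K$ inside $\mc K'$, hence we can shortcut'' --- only bounds distances in $\mc K'$ (along $gP$ itself, whose edges are not in $\mc K$). In the essential case where both endpoints $p_0,p_1$ of $P$ lie in $\mc K$, an excursion traversing $gP$ joins $gp_0$ to $gp_1$, and the $\mc K$-distance between these two vertices is in no way controlled by $\mathrm{length}(P)$. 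Concretely, take $\mc G$ trivial, $\mc K$ a circuit of length $1000$, and $P$ a single chord joining two antipodal vertices of $\mc K$: then your claimed inequality $\dist_{\mc K}(u,v)\le(1+2\,\mathrm{length}(P))\,\dist_{\mc K'}(u,v)$ would give $\dist_{\mc K}(p_0,p_1)\le 3$, whereas it is $500$. So the multiplicative constant you extract is wrong, and as written the proof of the quasi-isometric embedding fails. The same problem infects the retraction $r$: the edge of $gP$ adjacent to $gp_1$ is sent to the pair $(gp_1,gp_0)$, which is not an edge of $\mc K$, so $r$ is not coarse Lipschitz with constant $\mathrm{length}(P)$.

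The repair is exactly where connectedness of $\mc K$ enters (you cite it, but never actually use it): choose once and for all a geodesic $P_0$ in $\mc K$ joining the endpoints of $P$, and replace each occurrence of $gP$ in a path of $\mc K'$ by $gP_0$; equivariance makes the constant uniform, and one gets $\dist_{\mc K}(u,v)\le C\,\dist_{\mc K'}(u,v)$ with $C$ depending on $|P_0|$ (equivalently on $\dist_{\mc K}(p_0,p_1)$), not on $\mathrm{length}(P)$. This is what the paper does, after first reducing (as you partly anticipate) to the case where the interior of $P$ misses $\mc K$ --- which also disposes of your worry about interior vertices of translates of $P$ lying in $\mc K$ --- and after noting that if only one endpoint of $P$ lies in $\mc K$ the inclusion is in fact isometric, since any excursion must then enter and leave $gP$ through $gp_0$ and can simply be deleted. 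Your coarse-density observation (every vertex of $\mc K'$ is within $\mathrm{length}(P)$ of $\mc K$ in $\dist_{\mc K'}$) and the final appeal to quasi-isometry invariance of hyperbolicity are fine.
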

\begin{proof}
Without loss of generality we can assume that no interior points of $P$ belong to $\mc K$. If the interior of $P$ intersects $\mc K$,
then the $\mc G$-attachment of $P$ is equivalent to a finite number of $\mc G$-attachments of paths with no interior points in $\mc K$.

If $P$ has only one endpoint in $\mc K$, then $\mc K \subset \mc K'$ is an isometric embedding. Assume that both endpoints of $P$ are in $\mc K$, and
let $P_0$ be a geodesic path in the connected graph $\mc K$ connecting the endpoints of $P$.

A geodesic path $Q'$ in  $\mc K'$  yields a path $Q \subset \mc K$  by replacing all $\mc G$-translates $gP$ occurring in $Q'$ by the path $gP_0$.
Observe that $|Q| \leq |Q'||P|$. Therefore, if $\dist_{\mc K}$ and $\dist_{\mc K'}$ denote the path metrics of $\mc K$ and $\mc K'$ respectively, then $ \dist_{\mc K}(u, v) \leq  |P| \dist_{\mc K'}(u, v)$ for any $u,v \in \mc K$.
\end{proof}

\begin{lem}[Single Edge Attachment] \label{lem:edge-attachment}
Let $\mc G$ act on a graph $\mc J$ with finite stabilizers of edges,  let $\mc K$ be a connected $\mc G$-invariant fine subgraph of $\mc J$,
and let $\mc K'$ be a subgraph of $\mc J$ obtained from $\mc K$ by the $\mc G$-attachment of an edge $e$ between two vertices of $\mc K$.
Then for each $n\in \N$ and each pair of vertices $u,v$ of $\mc K'$, there is a finite subgraph $\mc C=\mc C(u,v,n)$ of $\mc K$
such that any length~$n$ embedded path in $\mc K'$ between $u,v$  has all vertices contained in $\mc C$.
\end{lem}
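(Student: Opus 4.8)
The plan is to argue by induction on $n$, proving the statement simultaneously for all pairs of distinct vertices $u\ne v$ (and for all lengths up to $n$); the case $u=v$ is vacuous, since an embedded path of positive length has pairwise distinct vertices, hence distinct endpoints. The base cases $n\le 1$ are immediate, a length-$\le 1$ embedded path between $u$ and $v$ using only the vertices $u,v$. We may also assume $a\ne b$: if $e$ is a loop, no embedded path between two distinct vertices uses any $\mc G$-translate of it, so every such path lies in $\mc K$ and Lemma~\ref{lem:finite-paths} applies directly. Finally, fix once and for all a geodesic $P_0$ of $\mc K$ joining $a$ and $b$, and put $d=|P_0|\ge 1$; this is finite since $\mc K$ is connected, and independent of the translate chosen since $\mc G$ preserves $\mc K$ and its path metric.

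For the inductive step I would first reduce to the following claim: \emph{only finitely many $\mc G$-translates of $e$ occur as an edge of some embedded length-$n$ path of $\mc K'$ between $u$ and $v$.} Granting this, let $F$ be the finite set of endpoints of those translated edges. An embedded length-$n$ path between $u$ and $v$ meeting no translate of $e$ lies in $\mc K$, and by Lemma~\ref{lem:finite-paths} there are only finitely many such, contributing finitely many vertices. An embedded length-$n$ path $\gamma$ that does meet a translate of $e$ may be cut at its first such edge, $\gamma=\alpha\cdot(ge)\cdot\gamma'$, where $\alpha\subset\mc K$ is an embedded path of length $\le n$ from $u$ to an endpoint $p\in F$ of $ge$, and $\gamma'$ is an embedded path of $\mc K'$ of length $<n$ from the other endpoint $q\in F$ to $v$. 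There are finitely many choices of $\alpha$ by Lemma~\ref{lem:finite-paths} (over the finitely many $p\in F$), and all vertices of $\gamma'$ lie in a fixed finite subgraph by the inductive hypothesis, applied to the finitely many pairs $(q,v)$ with $q\in F$, $q\ne v$, and the length of $\gamma'$ (the case $q=v$ being trivial). Since all vertices involved lie in $\mc K$, the union of these finitely many subgraphs is the required $\mc C=\mc C(u,v,n)$.

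The heart of the matter is the claim, and it is here that the three hypotheses genuinely interact. Suppose $ge$ occurs in an embedded length-$n$ path $\gamma$ of $\mc K'$ between $u$ and $v$. Replacing every translated edge of $\gamma$ by the corresponding translate of $P_0$ yields a walk $\gamma^{*}$ in $\mc K$ from $u$ to $v$ of length at most $nd$ passing through both endpoints of $ge$. Choosing an embedded subpath of $\gamma^{*}$ between $u$ and $v$ and invoking Lemma~\ref{lem:finite-paths}, there are only finitely many possibilities for such a ``core'' $\pi$; fix one. Any endpoint of an occurring translate of $e$ that does not already lie on $\pi$ is joined to $\pi$ by embedded subpaths of the associated $\gamma^{*}$ of length at most $nd$, whose endpoints on $\pi$ range over the finite vertex set of $\pi$. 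If infinitely many distinct translates of $e$ occurred, then Lemma~\ref{lem:malnormality} (finiteness of $\mc G_a\cap\mc G_b$), together with finiteness of the edge stabilizer $\mc G_e$, would force infinitely many distinct vertices among these endpoints; closing up the short subpaths joining them to $\pi$ against the segments of $\pi$ between the (finitely many) attaching vertices then exhibits infinitely many circuits of bounded length through a single edge of $\mc K$, contradicting fineness.

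I expect the bookkeeping of the last paragraph — organizing the ``excursions'' of the walks $\gamma^{*}$ away from the finitely many core paths, and packaging the resulting closed walks into genuine circuits sharing an edge — to be the main obstacle; the rest is formal given Lemmas~\ref{lem:finite-paths} and~\ref{lem:malnormality}. A subtlety to keep in mind throughout is that $\mc K$ may have vertices of infinite valence, so ``bounded distance from $u$'' cannot be upgraded to ``finitely many vertices''; the argument must instead always be funneled through embedded paths with \emph{both} endpoints pinned down, which is precisely the role of the finitely many core paths $\pi$.
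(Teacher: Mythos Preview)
Your reduction to the central claim (only finitely many translates $ge$ occur) and the subsequent induction on $n$ are correct and clean. The gap is in the proof of the claim itself, and it is more than bookkeeping.

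The difficulty is this: from the walk $\gamma^{*}$ you correctly extract an embedded core $\pi$ from $u$ to $v$, and each endpoint $p$ of an occurring $ge$ lies on a sub-walk $w_1\to p\to w_2$ of $\gamma^{*}$ with $w_1,w_2\in\pi$ and internal vertices off $\pi$. But this sub-walk need not be embedded (the replacements $gP_0$ can overlap one another and the $\mc K$-segments of $\gamma$), and when you simplify it to an embedded path between $w_1$ and $w_2$ --- or close it up against a $\pi$-segment to obtain a circuit through an edge of $\pi$ --- the point $p$ may be discarded. Thus infinitely many distinct $p$ can yield the \emph{same} finite collection of circuits, and no contradiction with fineness follows. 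In your own language: the excursion $w_1\to p\to w_2$ can itself have sub-excursions (off its own embedded core), and those can have sub-sub-excursions, and so on. A single layer of core paths $\pi$ does not pin down both endpoints of the path carrying $p$; only an iterated process does.

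The paper's proof is precisely this missing iteration, packaged differently. Rather than inducting on $n$ and isolating a ``finitely many translates'' claim, it builds an increasing chain $\mc C_0=\{u,v\}\subset\mc C_1\subset\cdots\subset\mc C_n$ inside $\mc K$, where each $\mc C_{i+1}$ is obtained from $\mc C_i$ by first adding all embedded $\mc K$-paths of length $\le n|P_0|$ between vertices already in $\mc C_i$ (an ``$n|P_0|$-hull''), and then adding any translate $gP_0$ that shares an edge with $\mc C_i$ (a ``$P_0$-inclusion''). Both operations preserve finiteness (the first by Lemma~\ref{lem:finite-paths}, the second by finite edge stabilizers), and one checks that if $\mc C_i$ misses a vertex of a given length-$n$ embedded path $Q'\subset\mc K'$ then $\mc C_{i+1}$ captures strictly more of them; so $\mc C_n$ works. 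Your excursion picture, iterated until the sub-walks have no further sub-excursions, reproduces exactly this hull construction --- but that iteration is the whole content of the argument, not a bookkeeping afterthought.
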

\begin{proof}
Let $P_0$ be a path in $\mc K$ between the endpoints of $e$. Consider the following two operations on a subgraph $\mc C$ of $\mc K$.
\begin{enumerate}
\item  ($n|P_0|$-hull in $\mc K$)
Add all embedded paths in $\mc K$ of length $\leq n|P_0|$ with different endpoints in $\mc C$.
\item ($P_0$-inclusion)
Add each translated $gP_0$ (for $g\in \mc G$) containing at least one edge of $\mc C$.
\end{enumerate}
Note that the above operations preserve finiteness. Since $\mc K$ is fine, Lemma~\ref{lem:finite-paths} implies that $n|P_0|$-hulls preserve  finiteness.
A $P_0$-inclusion  preserves finiteness since $\mc G$-stabilizers of edges are finite. 

Let $u,v$ be different vertices of $\mc K'$ and $n\in \N$.  Let $\mc C_0 = \{u, v\}$, and let $\mc C_{i+1}$ be the finite graph obtained from $\mc C_i$ by performing an $n|P_0|$-hull and then a $P_0$-inclusion. Let $\mc C = \mc C^n$.  

Let $Q'$ be an embedded path in $\mc K'$ from $u$ to $v$ of length $\leq n$.  Suppose that $\mc C_i$ does not contain all vertices of $Q'$. We will then show that $\mc C_{i+1}$ contains more vertices of $Q'$ than $\mc C_i$. 

If some new edge $ge$ of $Q'$ whose endpoints are not both in $\mc C_i$  has the property that $gP_0$ has a common edge with $\mc C_i$, then the endpoints of $ge$ are in $\mc C_{i+1}$. Hence $\mc C_{i+1}$ contains more vertices of $Q'$ than $\mc C_i$.

Assume that no new edge $ge$ of $Q'$ has the above property.
Consider a maximal subpath $S'$ of $Q'$ whose internal vertices do not lie in $\mc C_i$ and contains at least one vertex of $Q'$ that is not in $\mc C_i$.
Let $S$ denote the subgraph of $\mc K$ which is obtained from $S'$ by replacing each $ge$ by $gP_0$.
By the assumption, $S\cap \mc C_i$ has no edge. Moreover  $S$ is connected, contains the endpoints of $S'$, and has at most $n|P_0|$ edges.
It follows that there is an embedded path $E$ in $S$ of length $\leq n|P_0|$ joining the endpoints of $S'$.
Observe that all interior vertices of $E$ are outside of $\mc C_i$, and $E$ is contained in the $n|P_0|$-hull of $\mc C_i$.

Now we consider two cases on $E$. Either $E$ contains an edge of $Q'$ with at least one endpoint not in $\mc C_i$, or $E$ contains an edge of $gP_0$ where $ge$ is an edge of $Q'$ with at least one endpoint not in $\mc C_i$. In both cases, this new endpoint is a vertex of $Q'$ that is in $C_{i+1}-C_i$. Hence $\mc C_{i+1}$ contains more vertices of $Q'$ than $\mc C_i$.
\end{proof}

A proof of Lemma~\ref{prop:arc-attachment} can also be found in~\cite[Lem~2.3]{BO99}.
\begin{lem} [Arc Attachment Preserves Fineness]  \label{prop:arc-attachment}
Let $\mc G$ act on a graph $\mc J$ with finite stabilizers of edges.
Let $\mc K$ be a connected $\mc G$-invariant subgraph of $\mc J$,
and let $\mc K'$ be obtained  from $\mc K$ by the $\mc G$-attachment of an arc $P$.
Then if $\mc K$ is fine, then $\mc K'$ is fine.
\end{lem}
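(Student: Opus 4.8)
The plan is to reduce the general arc attachment to the case of attaching a single edge, and then to prove fineness of $\mc K'$ in that case directly from Lemma~\ref{lem:edge-attachment} together with Lemma~\ref{lem:finite-paths}. First I would observe that it suffices to treat the attachment of a single edge: an arc $P$ of length $\ell$ can be subdivided so that its $\mc G$-attachment is realized as a sequence of $\mc G$-attachments of single edges (introducing finitely many new orbits of valence-two vertices, which does not affect fineness), and fineness is preserved at each stage. One must check the hypotheses of Lemma~\ref{lem:edge-attachment} persist along the way, namely that the intermediate graphs remain connected $\mc G$-invariant fine subgraphs of a graph with finite edge stabilizers; connectedness and $\mc G$-invariance are immediate, finiteness of edge stabilizers is inherited, and fineness of the intermediate graphs is exactly the inductive hypothesis. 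A small point: if $P$ has only one endpoint (or neither endpoint) in $\mc K$, then as in the proof of Lemma~\ref{prop:arc-attachment-2} the attachment either does nothing new to circuits or can be arranged to have both endpoints in $\mc K$ after attaching the dangling portion, which is a forest and creates no circuits; so we may assume both endpoints of the single edge $e$ lie in $\mc K$.

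With the reduction in place, I would argue fineness of $\mc K' = \mc K \cup \bigcup_{g \in \mc G} g e$ using Lemma~\ref{lem:finite-paths}: it is enough to show that for every $n$ and every pair of vertices $u, v$ of $\mc K'$ there are only finitely many embedded paths of length $n$ in $\mc K'$ between $u$ and $v$. But Lemma~\ref{lem:edge-attachment} furnishes a \emph{finite} subgraph $\mc C = \mc C(u, v, n)$ of $\mc K$ containing all vertices of every such embedded path. Since $\mc C$ has finitely many vertices and $\mc K$ is fine, $\mc C$ (being a finite graph) has only finitely many edges, hence only finitely many edges of $\mc K'$ have both endpoints in $\mc C$: these are the finitely many edges of $\mc C$ itself together with the finitely many $\mc G$-translates $ge$ whose endpoints both lie in the finite vertex set of $\mc C$ (here one uses that an edge of $\mc K'$ is either an edge of $\mc K$ or a translate of $e$, and that a $\mc G$-translate $ge$ is determined up to the finite edge stabilizer of $e$ by its pair of endpoints — again finitely many possibilities). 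A length-$n$ embedded path in $\mc K'$ between $u$ and $v$ is then a length-$n$ embedded path in the finite graph spanned by these finitely many edges, and a finite graph has only finitely many embedded paths of any given length. Applying Lemma~\ref{lem:finite-paths} in the reverse direction, $\mc K'$ is fine.

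The main obstacle is the bookkeeping in the reduction step rather than anything deep: one must be careful that subdividing $P$ and attaching edges one at a time genuinely produces $\mc K'$ (up to the harmless subdivision of the new arcs) and that every intermediate graph still satisfies the connected, $\mc G$-invariant, fine hypotheses needed to invoke Lemma~\ref{lem:edge-attachment} at the next stage — in particular that the order of attaching translates does not matter because the final union $\mc K \cup \bigcup_g gP$ is independent of it. Everything else is a direct combination of Lemmas~\ref{lem:finite-paths} and~\ref{lem:edge-attachment} with the finiteness of edge stabilizers, so once the reduction is cleanly stated the proof is short.
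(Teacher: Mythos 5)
Your overall route coincides with the paper's: reduce to the $\mc G$-attachment of a single edge $e$ with both endpoints in $\mc K$ (dismissing the dangling cases as in Lemma~\ref{prop:arc-attachment-2}), use Lemma~\ref{lem:edge-attachment} to trap all vertices of length~$n$ embedded paths between $u,v$ inside a finite subgraph $\mc C\subset\mc K$, bound the number of edges of $\mc K'$ available to such paths, and conclude with Lemma~\ref{lem:finite-paths}. So there is no methodological divergence; the issue is with how you justify the edge-counting step.

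One justification is wrong as stated: you claim that a translate $ge$ is ``determined up to the finite edge stabilizer of $e$ by its pair of endpoints.'' If $ge$ and $g'e$ have the same endpoint pair, then $g^{-1}g'$ lies only in the \emph{setwise stabilizer of the endpoint pair} $\{a,b\}$ of $e$, not in the stabilizer of the edge $e$, and finiteness of edge stabilizers alone does not bound the number of parallel translates: if $\mc G_a\cap\mc G_b$ were infinite, there would be infinitely many distinct translates of $e$ joining the same two vertices, each with finite stabilizer. The finiteness you need is exactly Lemma~\ref{lem:malnormality} (Almost Malnormal): since $a\neq b$ are vertices of the fine graph $\mc K$ and edge stabilizers are finite, $\mc G_a\cap\mc G_b$ (hence the pair stabilizer, which contains it with index at most $2$) is finite; this is precisely the point where the paper invokes that lemma, and it is where fineness of $\mc K$ re-enters the argument. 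A second, smaller slip: embedded paths in $\mc K'$ between $u$ and $v$ have all their \emph{vertices} in $\mc C$, but their edges coming from $\mc K$ need not be edges of $\mc C$; they are edges of $\mc K$ with both endpoints in $\mc C$, and to see that there are finitely many of these you should again use fineness of $\mc K$ (Lemma~\ref{lem:finite-paths} with $n=1$ bounds parallel edges), which is why the paper first records that $\mc K'$ has finitely many edges between any pair of vertices. Both repairs are local; with them your argument becomes the paper's proof.
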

\begin{proof}
Without loss of generality we may assume that no interior points of $P$ belong to $\mc K$. Indeed, if the interior of $P$ intersects $\mc K$,
then the $\mc G$-attachment of $P$ is equivalent to a finite number of $\mc G$-attachments of paths with no interior points in $\mc K$.
Observe that if $P$ has only one endpoint in $\mc K$, then every circuit in $\mc K'$ is contained in $\mc K$,  and therefore $\mc K'$ is fine.
It therefore suffices to consider the case that $P$ consists of a single edge between a pair of vertices of $\mc K$.

Observe that $\mc K'$ has finitely many edges between any pair of vertices. Indeed, since $\mc K$ is fine, it has finitely many edges between any pair of vertices;
then, since $\mc G$ acts with finite edge stabilizers on $\mc J$,  Lemma~\ref{lem:malnormality} implies the statement. 

Let $u,v$ be distinct vertices of $\mc K'$ and fix $n>0$. By Lemma~\ref{lem:edge-attachment}, there is a finite subgraph $\mc C$ of $\mc K$
such that any length~$n$ embedded path in $\mc K'$ between $u,v$ has all vertices contained in $\mc C$. 
Since $\mc K'$ has finitely many edges between any pair of vertices, there are only finitely many length~$n$ embedded paths between $u,v$ in $\mc K'$.
By Lemma~\ref{lem:finite-paths}, $\mc K'$ is fine.
\end{proof}

\begin{defn} [Edge and Vertex Removals]
Let $\mc G$ be a group acting on a graph $\mc K$.

If $e$ is an edge of $\mc K$, the \emph{$\mc G$-removal of the edge $e$} means forming the new graph
$\mc K'$ obtained by removing the \emph{interiors} of all $\mc G$-translates of $e$.

If $v$ is an edge of $\mc K$, the \emph{$\mc G$-removal of the vertex $v$} means forming the  the new graph
$\mc K'$ obtained by removing  all $\mc G$-translates of $v$ and  all $\mc G$-translates of open edges with an endpoint at $v$.
\end{defn}

\begin{lem}[Removals preserve fineness and coarse geometry]\label{prop:removals}
Let $\mc G$ act cocompactly on a connected graph $\mc K$ with finite $\mc G$-stabilizers of edges.
Let $\mc K'$ be the graph obtained from $\mc K$ by performing a $\mc G$-removal of a finite valence vertex,
or a $\mc G$-removal of an edge.
\begin{itemize}
\item If $\mc K'$ is connected, then the inclusion $\mc K' \subset \mc K$ is a quasi-isometry. In particular, if $\mc K$ is hyperbolic  then $\mc K'$ is hyperbolic.
\item If $\mc K$ is fine  then $\mc K'$ is fine.
\end{itemize}
\end{lem}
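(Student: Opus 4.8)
The plan is to deduce both assertions from results already established in this section. The fineness statement is essentially free: in either type of removal $\mc K'$ is a subgraph of $\mc K$, so every circuit of $\mc K'$ is a circuit of $\mc K$; since a subgraph of a fine graph is fine, $\mc K$ fine implies $\mc K'$ fine. The substance is the coarse-geometry statement, and the strategy there is to realize $\mc K$ as the result of attaching finitely many $\mc G$-orbits of arcs to $\mc K'$, and then to invoke Lemma~\ref{prop:arc-attachment-2}.

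Consider first the $\mc G$-removal of an edge $e$ with endpoints $a,b$. The vertices $a$ and $b$ are not removed, so $e$ is an arc of $\mc K$ with both endpoints in $\mc K'$, and $\mc K = \mc K' \cup \bigcup_{g\in\mc G} ge$ is precisely the $\mc G$-attachment of the arc $e$ to $\mc K'$. Since $\mc K'$ is connected and $\mc G$-invariant, Lemma~\ref{prop:arc-attachment-2} yields that the inclusion $\mc K' \subset \mc K$ is a quasi-isometry, and hence that $\mc K'$ is hyperbolic whenever $\mc K$ is.

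Now consider the $\mc G$-removal of a finite-valence vertex $v$; we may assume $\mc K' \neq \emptyset$, since otherwise there is nothing to prove. First I would observe that $v$ has a neighbour lying in $\mc K'$: if not, then by $\mc G$-equivariance no translate of $v$ is joined by an edge to a vertex outside the orbit $\mc Gv$, so $\mc Gv$ is a union of connected components of $\mc K$, and connectedness of $\mc K$ forces $\mc K' = \emptyset$. Next, list the finitely many edges of $\mc K$ incident to $v$ as $P_1,\dots,P_k$ (finiteness is exactly the finite-valence hypothesis), ordered so that $P_1$ joins $v$ to a vertex $u_1 \in \mc K'$. Set $\mc K'_0 = \mc K'$ and let $\mc K'_i = \mc K'_{i-1} \cup \bigcup_{g\in\mc G} gP_i$ be the $\mc G$-attachment of $P_i$. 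One checks inductively that each $\mc K'_{i-1}$ is a connected $\mc G$-invariant subgraph of $\mc K$ and that $P_i$ has an endpoint in $\mc K'_{i-1}$: for $i=1$ the endpoint $u_1 \in \mc K'$ suffices, while for $i\geq 2$ both endpoints of $P_i$ lie in $\mc K'_{i-1}$, since $v$ and its whole orbit already lie in $\mc K'_1$ and the other endpoint of $P_i$ lies either in $\mc K'$ or in $\mc Gv$. Finally $\mc K'_k = \mc K$: every edge of $\mc K$ incident to $\mc Gv$ is a $\mc G$-translate of some $P_i$, so after the $k$-th attachment all vertices and all edges of $\mc K$ have been recovered, while every remaining edge already belonged to $\mc K'$. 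Applying Lemma~\ref{prop:arc-attachment-2} at each of the $k$ steps and composing the resulting quasi-isometries shows that $\mc K' \subset \mc K$ is a quasi-isometry, with the hyperbolicity statement following as before.

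The one point that needs care is the bookkeeping in the vertex-removal case: one must ensure the first arc attachment has an endpoint in $\mc K'$ (this is what the preliminary observation about a neighbour of $v$ is for), and that running through all edges incident to $v$ terminates with exactly $\mc K$ and nothing extra. I do not anticipate any genuine difficulty beyond this. The finite valence of $v$ is precisely what keeps the number of attachments finite, and connectedness of $\mc K$ is what excludes the degenerate case $\mc K' = \emptyset$ in the situation of interest.
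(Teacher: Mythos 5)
Your argument is correct, and it reaches the conclusion by a mildly different route than the paper. For fineness you do exactly what the paper does (a subgraph of a fine graph is fine). For the coarse geometry, the paper argues downward: in the edge case it gives the direct estimate $\dist_{\mc K'}(u,v)\leq M\dist_{\mc K}(u,v)$ by replacing each removed translate $g$P by a fixed path $gQ$ in $\mc K'$, and in the vertex case it reduces to successive edge $\mc G$-removals at $v$ until $\valence(v)=1$ and then removes the resulting spur. You instead argue upward, rebuilding $\mc K$ from $\mc K'$ by finitely many $\mc G$-arc attachments and quoting Lemma~\ref{prop:arc-attachment-2} at each step; the underlying length estimate is of course the same one hidden in that lemma's proof. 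What your version buys is explicit bookkeeping that the paper leaves terse: in the vertex case the order of operations matters (one must first reattach, or in the paper's direction last remove, an edge joining $v$ to $\mc K'$ so that the intermediate graphs stay connected and the attachment/removal lemma applies), and your preliminary observation that $v$ has a neighbour in $\mc K'$ when $\mc K'\neq\emptyset$, plus the choice of $P_1$, supplies exactly this. What the paper's version buys is a self-contained explicit multiplicative constant and a slightly shorter reduction (edge removals plus one spur removal). One cosmetic point: in the degenerate case $\mc K'=\emptyset$ the inclusion is not a quasi-isometry under any convention making $\emptyset$ connected, so rather than "nothing to prove" it is cleaner to say the connectivity hypothesis is understood to mean $\mc K'$ is nonempty and connected, which is how the lemma is used in the paper; this does not affect the substance of your proof.
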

\begin{proof}
That fineness is preserved under edge $\mc G$-removals and finite valence vertex $\mc G$-removals is immediate. We address the quasi-isometric embedded  property.

\emph{Edge $\mc G$-removal.} Suppose that $P$ is an edge of $\mc K$ and that $\mc K'$ is connected. Let $\dist_{\mc K}$ and $\dist_{\mc K'}$ denote the combinatorial path metrics of $\mc K$ and $\mc K'$ respectively. Let $Q$ be a path in $\mc K'$ with the same endpoints as $P$, and let $M=|Q|$. A standard argument shows that $\dist_{\mc K'} (u, v) \leq M \dist_{\mc K} (u, v)$ for any pair of vertices $u, v$ of $\mc K$.  Hence,  the inclusion $\mc K' \subset \mc K$ is a quasi-isometric embedding.

\emph{Finite valence vertex $\mc G$-removal.} Observe that when $\valence (v) \geq 2$, then an edge at $v$ can be $\mc G$-removed.  Repeating this finitely many times, we arrive at the situation where $\valence (v)=1$. We now remove all $\mc H$-translates of the \emph{spur} consisting of the vertex $v$ together with its unique adjacent edge. Since edge and spur $\mc G$-removals induce quasi-isometric embeddings, the inclusion $\mc K' \subset \mc K$ is a quasi-isometric embedding.
\end{proof}

\subsection{Joint Equivariant Embedding of Two Fine Graphs}

\begin{thm}\label{thm:main4}
Let $\mc G$ be hyperbolic relative to a collection of subgroups $\mb P$, and let $\mc K_1$ and $\mc K_2$ be $(\mc G, \mb P)$-graphs.
Then there is a $(\mc G, \mb P)$-graph $\mc K$ such that $\mc K_1$ and $\mc K_2$ both embed equivariantly and simplicially into $\mc K$.
\end{thm}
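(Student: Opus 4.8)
The plan is to build $\mc K$ by starting from the disjoint union of $\mc K_1$ and $\mc K_2$, identifying the vertex orbits appropriately, and then using the arc-attachment machinery of the previous subsection to glue the two graphs together into a single connected fine hyperbolic graph on which $\mc G$ still acts cocompactly with finite edge stabilizers and the correct vertex stabilizers.

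First I would set up the vertex identification. By Lemma~\ref{prop:bijection}, the infinite-valence vertices of $\mc K_1$ and of $\mc K_2$ are each in canonical $\mc G$-equivariant bijection with the set of conjugates $\{g\mc Pg^{-1} : g\in\mc G,\ \mc P\in\mb P_\infty\}$; composing these gives a canonical $\mc G$-equivariant bijection between $V_\infty(\mc K_1)$ and $V_\infty(\mc K_2)$ that respects stabilizers. Form the graph $\mc K_0$ from the disjoint union $\mc K_1\sqcup\mc K_2$ by gluing along this bijection on infinite-valence vertices. This $\mc K_0$ carries a $\mc G$-action; it is fine (a disjoint union of fine graphs with some vertices identified is still fine — any circuit lies entirely in one of the two pieces, since the two pieces meet only in vertices), it has finite edge stabilizers and finitely many edge orbits, and its vertex stabilizers are exactly those of $\mc K_1$ together with those of $\mc K_2$; crucially every infinite vertex stabilizer is conjugate into $\mb P$ and represented, since that already holds in $\mc K_1$. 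However $\mc K_0$ has (at most) two connected components — essentially the images of $\mc K_1$ and $\mc K_2$, which share only the infinite-valence vertices — unless $\mb P_\infty$ is nonempty, in which case $\mc K_0$ may already be connected but not hyperbolic.

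Next I would connect and ``coarsen'' $\mc K_0$ using arc attachments. Pick a vertex $v_1$ in $\mc K_1\subset\mc K_0$ and a vertex $v_2$ in $\mc K_2\subset\mc K_0$ lying in the same $\mc G$-orbit is not generally possible, so instead: choose any edge $e$ of $\mc K_0$ joining a vertex of (the image of) $\mc K_1$ to a vertex of (the image of) $\mc K_2$ — if no such orbit of vertices can be joined by a single new edge equivariantly, first subdivide or simply attach, via an equivariant arc attachment, a new edge $P$ from a chosen vertex $u\in\mc K_1$ to a chosen vertex $w\in\mc K_2$. Applying the $\mc G$-attachment of this arc $P$ to $\mc K_0$ yields a graph $\mc K_0'$; by Lemma~\ref{prop:arc-attachment} it is still fine, and it is now connected. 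But $\mc K_0'$ need not be hyperbolic. To fix this, observe that each of $\mc K_1,\mc K_2$ is hyperbolic, and I claim finitely many further equivariant arc attachments — namely, for each $\mc G$-orbit of edges of $\mc K_2$ pick a representative edge with endpoints $a,b$ that are vertices lying in $\mc K_0$; since $\mc K_1$ is connected and contains (via the identification) all infinite-valence vertices, choose geodesics in $\mc K_1$ between the relevant vertices and attach arcs matching the edges of $\mc K_2$, so that the $\mc K_2$-geometry is ``absorbed'' into $\mc K_1$. More cleanly: build $\mc K$ by equivariantly attaching to $\mc K_1$ one arc per $\mc G$-orbit of edges of $\mc K_2$, where an edge $\{a,b\}$ of $\mc K_2$ is replaced by an arc whose endpoints $a,b$ are located in $\mc K_1$ after the infinite-valence identification — and additionally, for the finite-valence vertices of $\mc K_2$ that are not yet in $\mc K_1$, attach them as new vertices with a single connecting arc to $\mc K_1$. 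By Lemmas~\ref{prop:arc-attachment-2} and~\ref{prop:arc-attachment}, after finitely many such attachments the resulting graph $\mc K$ is fine, hyperbolic (quasi-isometric to $\mc K_1$), $\mc G$-cocompact with finite edge stabilizers, and contains $\mc K_1$ simplicially and $\mc G$-equivariantly; by construction it also contains a simplicial $\mc G$-equivariant copy of $\mc K_2$, since every vertex and edge of $\mc K_2$ has been realized inside $\mc K$.

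The main obstacle is the simultaneous control of the two embeddings: I must ensure that the copy of $\mc K_2$ sitting inside $\mc K$ is genuinely a simplicial subgraph (no unintended edge identifications, distinct $\mc K_2$-edges mapping to distinct $\mc K$-edges) while also keeping $\mc K$ fine and hyperbolic, and while the vertex-stabilizer bookkeeping in Definition~\ref{def:BowRH} stays correct — in particular that no new infinite vertex stabilizer is created and that the infinite ones remain a set of conjugacy-class representatives from $\mb P$. The infinite-valence identification via Lemma~\ref{prop:bijection} is what makes the stabilizer bookkeeping work, and Corollary~\ref{cor:infinite-valence-2} guarantees consistency of that identification under the embeddings; the finiteness and hyperbolicity are handled orbit-by-orbit by the arc-attachment lemmas. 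The delicate point requiring care is organizing the attachments so that only finitely many $\mc G$-orbits of arcs are used (so cocompactness is preserved) while still capturing every edge of $\mc K_2$, which works because $\mc K_2$ has finitely many $\mc G$-orbits of edges.
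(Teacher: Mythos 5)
Your overall strategy is the paper's: identify the parabolic vertices of $\mc K_1$ and $\mc K_2$ canonically via Lemma~\ref{prop:bijection}, glue the disjoint union along that identification, observe via Corollary~\ref{cor:infinite-valence-2} (or directly) that everything outside $\mc K_1$ has finite valence, and then invoke Lemmas~\ref{prop:arc-attachment-2} and~\ref{prop:arc-attachment} to transfer hyperbolicity and fineness from $\mc K_1$. But the construction you finally commit to is flawed at a concrete step. You assert that after the infinite-valence identification an edge $\{a,b\}$ of $\mc K_2$ has its endpoints ``located in $\mc K_1$''; this is false in general, since only the infinite-valence vertices of $\mc K_2$ get identified with vertices of $\mc K_1$, while a typical edge of $\mc K_2$ joins two finite-valence vertices lying in neither graph's image of the other. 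Your patch --- attach one arc between $\mc K_1$-vertices per $\mc G$-orbit of edges of $\mc K_2$, and separately attach each leftover finite-valence vertex of $\mc K_2$ to $\mc K_1$ by a single new connecting arc --- does not produce a graph containing a simplicial $\mc G$-equivariant copy of $\mc K_2$: the edges of $\mc K_2$ between finite-valence vertices are never realized, so the final claim ``every vertex and edge of $\mc K_2$ has been realized inside $\mc K$'' is exactly what the construction fails to deliver, and it is the whole content of the theorem.

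The fix is short and is what the paper does: the glued graph itself is $\mc K$. The paper glues along chosen orbit representatives $u_{\mc P}\mapsto v_{\mc P}$ for \emph{every} $\mc P\in\mb P$ (the choice being forced, by Lemma~\ref{prop:bijection}, only when $\mc P$ is infinite), so the union is connected without your ad hoc extra connecting edge, every edge of $\mc K$ is an edge of $\mc K_1$ or of $\mc K_2$, and both graphs embed simplicially and equivariantly by construction; finitely many edge orbits and finite edge stabilizers are inherited. Hyperbolicity and fineness --- your worry that ``$\mc K_0'$ need not be hyperbolic'' --- are then handled by the arc-attachment lemmas applied \emph{inside} the glued graph: since every vertex of $\mc K-\mc K_1$ has finite valence and there are only finitely many $\mc G$-orbits of edges of $\mc K_2$, one obtains $\mc K$ from $\mc K_1$ by finitely many $\mc G$-equivariant attachments of arcs that are edges (or paths) of $\mc K_2$, chosen in an order in which each arc has an endpoint in the subgraph already built (possible by connectivity of $\mc K$). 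Each such attachment is a quasi-isometry by Lemma~\ref{prop:arc-attachment-2} and preserves fineness by Lemma~\ref{prop:arc-attachment}, so $\mc K$ is a $(\mc G,\mb P)$-graph with no need to ``absorb'' the geometry of $\mc K_2$ by rerouting its edges through $\mc K_1$.
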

\begin{proof}%[Proof of Theorem~\ref{thm:main4}]
For each $\mc P \in \mb P$, choose    vertices $u_{ _{\mc P}} \in \mc K_1$ and $v_{ _{\mc P}} \in \mc K_2$ having $\mc G$-stabilizer $\mc P$. Observe that by Lemma~\ref{prop:bijection}, if $\mc P$ is infinite there are unique choices for $u_{ _{\mc P}}$ and $v_{ _{\mc P}}$.

Let \[ V_{ _{\mb P}}(\mc K_1) = \{ g u_{ _{\mc P}} \ : \ g \in \mc G, \  \mc P\in \mb P \}, \]
and
\[ V_{ _{\mb P}}(\mc K_2) = \{ g v_{ _{\mc P}} \ : \ g \in \mc G, \  \mc P\in \mb P \}. \]
There is a natural $\mc G$-equivariant bijection $\varphi : V_{ _{\mb P}}(\mc K_1) \longrightarrow V_{ _{\mb P}}(\mc K_2)$ given by $g u_{ _{\mc P}} \mapsto g v_{ _{\mc P}}$ for each $g \in \mc G$ and $\mc P \in \mb P$.

Let $\mc K$ be the graph obtained from the disjoint union of $\mc K_1$ and $\mc K_2$ by identifying $V_{ _{\mb P}} (\mc K_1)$ with  $V_{ _{\mb P}} (\mc K_2)$ via the $\mc G$-equivariant map $\varphi$.  By construction, $\mc G$  acts on $\mc K$ with finitely many $\mc G$-orbits of edges, and with finite $\mc G$-stabilizers of edges. Moreover, $\mc K_1$ and $\mc K_2$ have natural $\mc G$-equivariant inclusions into $\mc K$.

By Corollary~\ref{cor:infinite-valence-2}, each vertex of $\mc K - \mc K_1$ has finite valence. Since $\mc K$ contains only finitely many $\mc G$-orbits of edges,  one obtains $\mc K$ after finitely many $\mc G$-equivariant arc attachments to $\mc K_1$. Since $\mc K_1$ is hyperbolic and fine,  Lemmas~\ref{prop:arc-attachment-2} and~\ref{prop:arc-attachment} imply that the graph $\mc K$ is hyperbolic and fine, and the inclusion $\mc K_1 \subset \mc K$ is a quasi-isometric embedding.
\end{proof}

\subsection{Independence of $(\mc G, \mb P)$-graph}

\begin{lem}\label{lem:initial-complex}
Let $\mc H$ be a group acting on a connected graph $\mc K$.  If $\mc H$  is finitely generated relative a finite collection of stabilizers of vertices of $\mc K$, then $\mc H$ acts cocompactly on a connected subgraph $\mc L$ of $\mc K$.

Specifically, suppose that $\mc H$ is generated by a finite subset $T \subset \mc H$ relative to the $\mc H$-stabilizers of  the vertices $C$.
If $v$ is a vertex of $\mc K$  and  $\mc J$ is a finite connected subgraph of $\mc K$ containing  the vertices $\{v \}  \cup Tv \cup C$,
then the  graph $\mc L = \bigcup_{h\in \mc H} h\mc J$ is connected.
\end{lem}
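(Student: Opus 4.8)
The plan is to show that $\mc L = \bigcup_{h \in \mc H} h\mc J$ is connected by showing that any vertex of $\mc L$ can be joined by a path in $\mc L$ to the basepoint $v$. Since $\mc L$ is a union of translates $h\mc J$ of the connected graph $\mc J$, and each translate $h\mc J$ contains the vertex $hv$, it suffices to prove that for every $h \in \mc H$ the vertices $v$ and $hv$ lie in the same component of $\mc L$; then, given an arbitrary vertex $w \in \mc L$, we have $w \in h\mc J$ for some $h$, so $w$ is joined within $h\mc J \subseteq \mc L$ to $hv$, which is joined to $v$.

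To connect $v$ to $hv$, I would use the relative generating hypothesis. Write $h$ as a word in the alphabet $T \cup T^{-1} \cup \bigcup_{c \in C} \mc H_c$, say $h = s_1 s_2 \cdots s_k$ where each $s_i$ is either an element of $T \cup T^{-1}$ or an element of some vertex stabilizer $\mc H_c$ with $c \in C$. It is enough to show that for each prefix $h_j = s_1 \cdots s_j$ (with $h_0 = 1$), the vertices $h_{j-1} v$ and $h_j v$ are connected in $\mc L$, and then concatenate. Consider the single step from $h_{j-1}v$ to $h_j v = h_{j-1} s_j v$. Applying the isometry $h_{j-1}$ to a suitable path in $\mc L$, and using that $\mc L$ is $\mc H$-invariant, it suffices to connect $v$ to $s_j v$ in $\mc L$. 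If $s_j \in T \cup T^{-1}$, then $v$ and $s_j v$ both lie in $\mc J$ (since $\mc J \supseteq \{v\} \cup Tv$, and for $s_j \in T^{-1}$ apply $s_j$ to the path from $v$ to $s_j^{-1} v$ inside $\mc J$, landing inside $s_j \mc J \subseteq \mc L$), so they are connected by a path in $\mc L$. If $s_j \in \mc H_c$ for some $c \in C$, then $s_j$ fixes the vertex $c$, while $c \in \mc J$ and $c \in s_j \mc J$; since $\mc J$ is connected there is a path in $\mc J$ from $v$ to $c$, and applying $s_j$ gives a path in $s_j \mc J \subseteq \mc L$ from $s_j v$ to $s_j c = c$; concatenating these two paths connects $v$ to $s_j v$ within $\mc L$.

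The only genuine point requiring care is the bookkeeping that lets us reduce a single step $h_{j-1} v \to h_j v$ to the model case $v \to s_j v$: this works because $\mc L$ is $\mc H$-invariant, so if $\gamma$ is a path in $\mc L$ from $v$ to $s_j v$, then $h_{j-1}\gamma$ is a path in $\mc L$ from $h_{j-1} v$ to $h_{j-1} s_j v = h_j v$. Chaining these paths for $j = 1, \dots, k$ produces a path in $\mc L$ from $v$ to $hv$, as desired. The first assertion of the lemma — that $\mc H$ acts cocompactly on $\mc L$ — is then immediate: $\mc L$ is by construction $\mc H$-invariant and is the union of the $\mc H$-translates of the \emph{finite} graph $\mc J$, so it has finitely many $\mc H$-orbits of cells, and we have just shown it is connected.
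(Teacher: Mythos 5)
Your proposal is correct and is essentially the paper's argument in expanded form: the paper notes that $\mc J\cap s\mc J\neq\emptyset$ for each generator $s\in T\cup\bigcup_{c\in C}\mc H_c$ (witnessed by $sv$ or by the fixed vertex $c$, exactly the points your paths route through) and concludes connectedness of $\mc L=\bigcup_{h\in\mc H}h\mc J$, with cocompactness immediate from finiteness of $\mc J$. Your prefix-chaining along a word $h=s_1\cdots s_k$ just makes that ``therefore'' explicit, so no substantive difference.
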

\begin{proof}
Since $\mc K$ is connected, there is a finite connected subgraph $\mc J$ of $\mc K$ containing  $\{v \}  \cup Tv \cup C$.
For any $h \in T \cup \bigcup_{i} H_{v_i}$, observe that $\mc J \cap h \mc J \neq \emptyset$.
Indeed, if $h\in T$ then $hv \in \mc J \cap h \mc J$, and if $h \in \mc H_{v_i}$ then $v_i \in \mc J \cap h \mc J$.
Therefore $\mc L=\bigcup_{h\in \mc H} h\mc J$ is connected.
Moreover, since $\mc J$ is compact, $\mc H$ acts cocompactly on $\mc L$.
\end{proof}

We restate and prove Theorem~\ref{thm:main2} below:
\begin{thm}[Quasiconvexity Independence of $\mc K$]\label{thm:movings}
Let $\mc K_1$ and $\mc K_2$ be $(\mc G, \mb P)$-graphs, and let $\mc H$ be a subgroup of $\mc G$.  If $\mc H$ satisfies relative quasiconvexity of Definition~\ref{def:ours} for $\mc K_1$, then it does for $\mc K_2$.   %In particular, Definition~\ref{def:ours} is independent of the $(\mc G, \mb P)$-graph choice.
\end{thm}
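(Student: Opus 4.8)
The plan is to reduce the statement to Theorem~\ref{thm:main4}. Suppose $\mc H$ is quasiconvex relative to $\mb P$ using $\mc K_1$, so there is a non-empty connected quasi-isometrically embedded $\mc H$-invariant subgraph $\mc L_1 \subset \mc K_1$ with finitely many $\mc H$-orbits of edges. First I would apply Theorem~\ref{thm:main4} to obtain a single $(\mc G, \mb P)$-graph $\mc K$ into which both $\mc K_1$ and $\mc K_2$ embed $\mc G$-equivariantly and simplicially. It then suffices to prove two things: (i) quasiconvexity with respect to $\mc K_1$ implies quasiconvexity with respect to $\mc K$; and (ii) quasiconvexity with respect to $\mc K$ implies quasiconvexity with respect to $\mc K_2$. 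By symmetry of the roles of $\mc K_1$ and $\mc K_2$, statement~(ii) is an instance of the contrapositive-free version of the problem where the big graph contains the target, so the two directions are genuinely different and both must be handled.

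For direction (i), the subgraph $\mc L_1$ sits inside $\mc K$ via the inclusion $\mc K_1 \hookrightarrow \mc K$. It is still connected, $\mc H$-invariant, and has finitely many $\mc H$-orbits of edges. The only issue is whether it remains quasi-isometrically embedded in the larger graph $\mc K$. Since $\mc K_1 \hookrightarrow \mc K$ is a quasi-isometric embedding (this is part of the conclusion of Theorem~\ref{thm:main4}, via Lemma~\ref{prop:arc-attachment-2}), and $\mc L_1 \hookrightarrow \mc K_1$ is a quasi-isometric embedding by hypothesis, the composition $\mc L_1 \hookrightarrow \mc K$ is a quasi-isometric embedding. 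So $\mc L_1$ witnesses quasiconvexity relative to $\mb P$ using $\mc K$, and direction (i) is done.

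Direction (ii) is the main obstacle. Here $\mc H$ is quasiconvex using $\mc K$, witnessed by some $\mc L \subset \mc K$, and I must produce a witness inside $\mc K_2 \subset \mc K$. The first step is to extract from the existence of $\mc L$ that $\mc H$ is finitely generated relative to a finite collection of $\mc H$-stabilizers of vertices of $\mc K$ which are parabolic in $\mc G$: since $\mc L$ is connected with finitely many $\mc H$-orbits of edges, a standard Švarc–Milnor-type argument gives a finite relative generating set $T$ together with finitely many vertex stabilizers, where the infinite-valence vertices of $\mc L$ have parabolic stabilizers by Lemma~\ref{prop:bijection}. Because every infinite-valence vertex of $\mc K$ lies in $\mc K_2$ by Corollary~\ref{cor:infinite-valence-2}, I may replace those finitely many chosen vertices (one per $\mc H$-conjugacy class) by $\mc G$-translates lying in $\mc K_2$ having the same stabilizers — moving a vertex within its $\mc G$-orbit does not change its $\mc G$-stabilizer, and this adjusts $T$ only by finitely many elements. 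Now apply Lemma~\ref{lem:initial-complex} to the action of $\mc H$ on the connected graph $\mc K_2$, with this finite relative generating set $T$ and these finitely many vertex stabilizers $C \subset \mc K_2$: it produces a connected subgraph $\mc L_2 = \bigcup_{h \in \mc H} h \mc J \subset \mc K_2$ on which $\mc H$ acts cocompactly, hence with finitely many $\mc H$-orbits of edges, and $\mc L_2$ is $\mc H$-invariant by construction. It remains to check $\mc L_2 \hookrightarrow \mc K_2$ is a quasi-isometric embedding. For this I would compare three metrics: the word metric on $\mc H$ relative to $T \cup C$, the metric on $\mc L_2$, and the metric on $\mc K_2$. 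The cocompact action of $\mc H$ on $\mc L_2$ with finite edge stabilizers gives, by Švarc–Milnor, that $\mc H$ with the $(T\cup \text{stabilizers})$-relative word metric is quasi-isometric to $\mc L_2$; and the same relative word metric controls distances in $\mc K$ — indeed $\mc H$ is quasi-isometrically embedded in $\mc K$ (via $\mc L$), and since $\mc K_2 \hookrightarrow \mc K$ is a quasi-isometric embedding by Theorem~\ref{thm:main4}, the orbit map $\mc H \to \mc K_2$ is a quasi-isometric embedding. Chasing these quasi-isometries, the inclusion $\mc L_2 \hookrightarrow \mc K_2$ is a quasi-isometric embedding, so $\mc L_2$ witnesses quasiconvexity of $\mc H$ relative to $\mb P$ using $\mc K_2$. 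The delicate point to get right is the bookkeeping ensuring that the relative word metric on $\mc H$ one gets out of $\mc L$ (using possibly infinite-valence vertices of $\mc K$) agrees coarsely with the one used as input to Lemma~\ref{lem:initial-complex}, which is exactly where Corollary~\ref{cor:infinite-valence-2} and the canonicity of infinite-valence vertices (Lemma~\ref{prop:bijection}) are essential.
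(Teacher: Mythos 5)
Your plan is correct in outline and follows the paper's skeleton for the first half: the reduction via Theorem~\ref{thm:main4} (the paper phrases your steps (i) and (ii) together as ``reducing to the case $\mc K_2\subset\mc K_1$''), and the construction of the candidate subgraph $\mc L_2\subset\mc K_2$ by extracting finite relative generation of $\mc H$ from its cocompact action on the witness subgraph and then invoking Lemma~\ref{lem:initial-complex}. Where you genuinely diverge is the key step, showing $\mc L_2\hookrightarrow\mc K_2$ is a quasi-isometric embedding. The paper does this by surgery on subgraphs: it enlarges $\mc L_2$ inside $\mc K_2$ until it contains every infinite-valence vertex of $\mc L_1$, then $\mc H$-attaches the finitely many orbits of edges of $\mc L_2$ to $\mc L_1$ (Lemma~\ref{prop:arc-attachment-2}), and finally recovers $\mc L_2$ by finitely many $\mc H$-removals of finite-valence vertices (Lemma~\ref{prop:removals}), so that $\mc L_2$ inherits the quasi-isometric embedding from $\mc L_1$. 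You instead chase metrics through the relative word metric on $\mc H$: the orbit map is coarsely Lipschitz into $\mc L_2$, while relative word length is linearly bounded by distance in $\mc L$, hence by distance in $\mc K$, hence trivially by distance in $\mc K_2$. This works and avoids the enlargement and removal steps entirely, but it relocates the technical content into the quantitative \v{S}varc--Milnor comparison for the \emph{non-proper} cocompact action on $\mc L$: you must check that word length with respect to $T\cup\{\text{stabilizers of }C\}$, with $C$ exactly the set fed into Lemma~\ref{lem:initial-complex}, is linearly bounded by $\dist_{\mc L}$, which requires absorbing the finite stabilizers of the remaining vertex orbits of $\mc L$ into $T$ and conjugating orbit representatives --- standard bookkeeping, but it is where your details live, whereas the paper only ever needs the qualitative generation statement.

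Two local slips should be repaired. First, ``moving a vertex within its $\mc G$-orbit does not change its $\mc G$-stabilizer'' is false (the stabilizer of $gv$ is $g\mc G_v g^{-1}$); fortunately no moving is needed: a vertex of $\mc L$ with infinite $\mc H$-stabilizer has infinite $\mc G$-stabilizer, hence infinite valence in $\mc K$ by Lemma~\ref{lem:infinite-stabilizers}, hence already lies in $\mc K_2$ by Corollary~\ref{cor:infinite-valence-2} --- this is exactly the paper's step ``vertices with infinite stabilizers are contained in $\mc K_2$,'' argued there via Lemma~\ref{lem:malnormality}. Second, your assertion that \v{S}varc--Milnor applied to the action on $\mc L_2$ makes $\mc H$ with the $(T\cup\text{stab}(C))$-relative word metric quasi-isometric to $\mc L_2$ is more than that lemma gives (the finite graph $\mc J$ may contain vertices whose infinite $\mc H$-stabilizers are not obviously of bounded word length with respect to $T\cup\text{stab}(C)$); your chase only uses the easy, coarse-Lipschitz direction of that comparison, so you should claim only that direction, keeping the ``hard'' direction for $\mc L$ where your choice of $C$ actually accounts for all vertex orbits. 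With these adjustments the argument goes through.
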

\begin{proof}
Let $\mc L_1$ be a non-empty connected and quasi-isometrically embedded subgraph of $\mc K_1$ that is $\mc H$-invariant and has finitely many $\mc H$-orbits of edges. We will construct a subgraph $\mc L_2$ of $\mc K_2$ with the same properties as $\mc L_1$.

\emph{Reducing to the case $\mc K_2 \subset \mc K_1$.}  By Theorem~\ref{thm:main4},  $\mc K_1$ and $\mc K_2$  have  $\mc G$-equivariant and quasi-isometric embeddings in a common $(\mc G, \mb P)$-graph $\mc K$.
It follows that $\mc L_1$ quasi-isometrically embeds in $\mc K$, so $\mc H$ satisfies relative quasiconvexity of Definition~\ref{def:ours} with respect
to $\mc K$. We can thus assume without loss of generality that $\mc K_2  \subset \mc K_1$.

\emph{Vertices of $\mc K_1$  with infinite stabilizers are contained in $\mc K_2$.}
Let $v \in \mc K_1$ be a vertex with infinite $\mc G$-stabilizer. Since $\mc K_1$ is a $(\mc G, \mb P)$-graph, $\mc G_{v} =  g  \mc P g^{-1}$ for some $\mc P \in \mb P$ and $g \in \mc G$. Since $\mc K_2$ is a $(\mc G, \mb P)$-graph, there is a vertex $w \in \mc K_2$ such that $\mc G_{w} =  g  \mc P g^{-1}$.  Since $v,w \in \mc K_1$ and have the same infinite stabilizer, Lemma~\ref{lem:malnormality} implies that $v=w$, and therefore $v \in \mc K_2$.

\emph{Producing an $\mc H$-cocompact connected subgraph $\mc L_2$ of $\mc K_2$.}
Since $\mc H$ acts cocompactly on the connected graph $\mc L_1$, there is a finite subset $T \subset \mc H$ such that $\mc H$ is generated by $T$
relative to the stabilizers of the vertices in $C=\{v_1, \dots , v_m\} \subset\mc L_1$.  By possibly enlarging $T$, we can assume that each $v_i \in C$ has infinite $\mc H$-stabilizer, and thus each $v_i$ is also a vertex of  $\mc K_2$. Let $v$ be a vertex of $\mc L_2$ and let $\mc J$ be a finite connected subgraph of $\mc K_2$ containing
the vertices $\{v\}\cup Tv \cup C$.  Let
\[  \mc L_2 = \bigcup_{h \in \mc H} h\mc J   \ \ \subset \ \ \mc K_2 ,\]
and notice that $\mc L_2$ is $\mc H$-cocompact by construction. Moreover, $\mc L_2$ is connected by Lemma~\ref{lem:initial-complex}.

\emph{Enlarging $\mc L_2$ within $\mc K_2$ to contain all infinite valence vertices of $\mc L_1$.}
Each infinite valence vertex of $\mc L_1$ lies in $\mc K_2$ by Corollary~\ref{cor:infinite-valence-2}. Since there are finitely many $\mc H$-orbits of such vertices, and since $\mc K_2$ is connected,  we can choose finitely many $\mc H$-enlargements  of $\mc L_2$ within $\mc K_2$, in order to guarantee that all such infinite valence vertices of $\mc L_1$ also lie in $\mc L_2$.

\emph{Reducing to the case $\mc L_2 \subset \mc L_1$.}
Since $\mc L_2$ has finitely many $\mc H$-orbits of edges, by Lemma~\ref{prop:arc-attachment-2},  after $\mc H$-attaching finitely many edges to $\mc L_1$, we can assume that $\mc L_2 \subset \mc L_1$.

\emph{Passing from $\mc L_1$ to $\mc L_2$ with finitely many $\mc H$-removals.}
Since each vertex in  $\mc L_1-\mc L_2$ has finite valence, and $\mc L_1$ has finitely many $\mc H$-orbits of edges,  $\mc L_2$ can be obtained from $\mc L_1$ by performing finitely many $\mc H$-removals of finite valence vertices  together with their incident edges.

Since $\mc L_1 \subset \mc K_1$ is a quasi-isometric embedding, Lemma~\ref{prop:removals} implies that
each $\mc L_2 \subset \mc K_1$ is a quasi-isometric embedding.  Since this inclusion factors as $\mc L_2 \subset \mc K_2 \subset \mc K_1$,
we see that $\mc L_2 \subset \mc K_2$ is also a quasi-isometric embedding.

We have thus reached our conclusion, since it is already true (by construction) that  $\mc L_2$ is a non-empty connected $\mc H$-invariant subgraph of $\mc K_2$ having finitely many $\mc H$-orbits of edges. In particular $\mc H$ satisfies relative quasiconvexity of Definition~\ref{def:ours} for $\mc K_2$.
\end{proof}

\section{Simple Ladders between Quasigeodesics}\label{sec:ladder}

In this section, we show that the fellow traveling of quasigeodesics in $\delta$-hyperbolic spaces is equivalent to the existence of narrow disc diagrams between  quasigeodesics. This is stated as Proposition~\ref{prop:upper}. As an application, we prove a strong fellow travel property for fine hyperbolic graphs admitting cocompact actions, Theorem~\ref{thm:BCP}.

\begin{defn}[$X_n(\mc K)$]
Recall that a \emph{circuit} is a (combinatorial) embedded circle.
If $\mc K$ is a graph and $n$ is a positive integer, the $2$-complex $X_n(\mc K)$ is constructed by attaching a $2$-cell along each circuit of length at most $n$.
\end{defn}

\begin{defn}[Simple Ladder]
A \emph{simple ladder between $P$ and $Q$} is a nonsingular disc diagram $D$
that is the union of a sequence of $2$-cells $R_1,\dots, R_\ell$ such that
each $R_i$ intersects $P$ and $Q$ in a nontrivial boundary arc,
and $R_i\cap R_j$ is a nontrivial internal arc when $|i-j|=1$,
and $R_i\cap R_j =\emptyset$ when $|i-j|>1$,
finally the startpoint of $P,Q$ lies in the interior of $R_1\cap \partial D$,
and the endpoint of $P,Q$ lies in the interior of $R_\ell \cap \partial D$. See Figure~\ref{fig:SimpleLadder}.
\end{defn}

\begin{figure}\centering
\includegraphics[width=.4\textwidth]{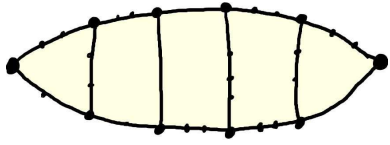}
\caption{A simple ladder between two paths. \label{fig:SimpleLadder}}
\end{figure}

\begin{defn}[Quasigeodesic]
Let $\mc K$ be a graph and let $\dist_{\mc K}$ be the induced length metric when all edges have length 1. For real constants $\lambda\geq 1, \epsilon\geq 0$, a combinatorial path $P$ is a \emph{$(\lambda, \epsilon)$-quasigeodesic} if for each subpath $P'$ of $P$ between vertices $u$ and $v$, the length $|P'|$ is at most $\lambda \dist_{\mc K} (u,v) + \epsilon$. A $(\lambda, 0)$-quasigeodesic is called a \emph{$\lambda$-quasigeodesic}.
\end{defn}

\begin{prop}[Simple Ladder]\label{prop:upper}
Let $\mc K$ be a $\delta$-hyperbolic graph. For each $\lambda \geq 1$,  there is an integer $N=N(\delta, \lambda)>0$ such that for all $n> N$  the following property holds:

If $P$ and $Q$ are embedded $\lambda$-quasigeodesics with the same startpoint and endpoint, and with no common interior points.
Then there is an embedded disc diagram $D \rightarrow X_n(\mc K)$ between $P$ and $Q$ such that $D$ is a simple ladder.
\end{prop}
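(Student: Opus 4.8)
The plan is to start from the standard fact that in a $\delta$-hyperbolic geodesic space, two $\lambda$-quasigeodesics $P,Q$ with the same endpoints $\lambda$-fellow-travel: there is a constant $K=K(\delta,\lambda)$ such that $P$ lies in the $K$-neighborhood of $Q$ and vice versa. Fix $N$ to be, say, $2K+10$ (the precise value to be tuned at the end), and take any $n>N$. The idea is to build the simple ladder $D$ by a ``staircase'' construction: walk simultaneously along $P$ and $Q$ and, at a bounded sequence of synchronized checkpoints, connect the current point of $P$ to the current point of $Q$ by a short path; the cyclic paths bounded by two consecutive connecting segments together with the intervening arcs of $P$ and $Q$ have length at most $n$, hence bound $2$-cells in $X_n(\mc K)$, and these $2$-cells are the rungs $R_1,\dots,R_\ell$ of the ladder.

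First I would make the fellow-traveling quantitative and combinatorial: choose vertices $p_0=p, p_1,\dots,p_\ell=q'$ along $P$ and $q_0=p,q_1,\dots,q_\ell=q'$ along $Q$ (here $p$ is the common startpoint and $q'$ the common endpoint), so that consecutive checkpoints are a controlled distance apart along $P$ and along $Q$, and so that $\dist_{\mc K}(p_i,q_i)\le K$ for every $i$. This is arranged by advancing along the shorter of the two quasigeodesics in unit steps and, at each step, pushing forward along the other one using the fellow-traveling bound; since each is a $\lambda$-quasigeodesic, the along-path distance between $p_i$ and $p_{i+1}$, and between $q_i$ and $q_{i+1}$, stays bounded by a constant depending only on $\lambda$ and $K$. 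Pick for each $i$ a geodesic (hence embedded) segment $\gamma_i$ from $p_i$ to $q_i$ of length $\le K$, with $\gamma_0$ and $\gamma_\ell$ degenerate. Then the cycle $c_i$ formed by $\gamma_{i-1}$, the subarc of $P$ from $p_{i-1}$ to $p_i$, $\gamma_i$ reversed, and the subarc of $Q$ from $q_{i-1}$ to $q_i$ reversed, has length bounded by a constant depending only on $\delta$ and $\lambda$; choosing $N$ at least this bound guarantees $c_i$ is null-homotopic in $X_n(\mc K)$ along a single $2$-cell once we also arrange $c_i$ to be embedded (cut out backtracking and, if $c_i$ self-intersects, replace it by an embedded subcycle still containing the relevant boundary arcs of $P$ and $Q$).

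The heart of the argument — and the step I expect to be the main obstacle — is upgrading this ``rough'' ladder into a genuine \emph{nonsingular} simple ladder mapping to $X_n(\mc K)$, i.e.\ ensuring: (i) each $R_i$ meets $P$ and $Q$ in a nontrivial boundary arc; (ii) $R_i\cap R_{i+1}$ is a single nontrivial internal arc (the image of $\gamma_i$, suitably trimmed); (iii) $R_i\cap R_j=\emptyset$ for $|i-j|>1$; and (iv) the whole diagram $D$ is embedded with the correct boundary $\partial D=P\cup Q$. The difficulty is that distinct connecting segments $\gamma_i,\gamma_j$ may cross each other or cross $P$ or $Q$, destroying the nonsingularity. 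I would handle this by an innermost-disc/minimality argument: among all choices of checkpoints and connecting segments producing a disc diagram $D\to X_n(\mc K)$ with boundary $P\cup Q$ and all $2$-cells of perimeter $\le n$, take one with fewest $2$-cells (or minimal area); then show any bigon, spur, or nonsingular point forces a reduction — either two rungs can be merged (their combined perimeter is still $\le n$ after cancelling the crossing, since the relevant lengths only went down), or a subsegment of some $\gamma_i$ can be rerouted along an already-present arc. Since $P$ and $Q$ are embedded and share no interior point, the boundary is already a circle, so the output of this reduction is a nonsingular disc diagram; minimality then forces the linear ``ladder'' intersection pattern (ii)--(iii), because any $R_i\cap R_j$ with $|i-j|>1$ would yield a shorter diagram by the same merging move. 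Finally I would verify the endpoint conditions of the definition of simple ladder — the startpoint lies in the interior of $R_1\cap\partial D$ and the endpoint in the interior of $R_\ell\cap\partial D$ — by noting $\gamma_0,\gamma_\ell$ are degenerate so $R_1$ and $R_\ell$ each absorb a genuine initial/terminal sub-arc of both $P$ and $Q$; and then record the explicit value of $N=N(\delta,\lambda)$ coming from the length bound on the cycles $c_i$.
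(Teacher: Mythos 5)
Your overall strategy (fellow traveling, checkpoints, short connectors, circuits of bounded length giving $2$-cells of $X_n(\mc K)$) matches the spirit of the paper, but the step you yourself flag as the main obstacle is exactly where the content of the proposition lies, and your proposed fix does not close it. Two concrete problems. First, the disjointness and ordering facts are never established: you need to know that the checkpoints you push onto the other quasigeodesic occur in order (so that the pieces of $P$ and of $Q$ form genuine concatenations without overlaps) and that non-consecutive rungs cannot meet. The paper gets both of these by anchoring the subdivision not on $P$ or $Q$ but on a geodesic $G$ between the common endpoints, cut into segments $G_i$ with $10\epsilon \leq |G_i| \leq 20\epsilon$: since $G$ is a geodesic, $G_i$ and $G_j$ are at distance at least $10\epsilon$ when $|i-j|>1$, which forces far-apart circuits to be disjoint, and Lemma~\ref{lem:path-splitting} is devoted precisely to showing that the induced pieces $P_i$ of $P$ meet in at most a point, so $P=P_1\cdots P_\ell$. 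Your synchronized-checkpoint scheme along $P$ and $Q$ gives neither of these for free, and the connectors are made embedded and pairwise disjoint in the paper by trimming the common prefix of the two geodesics $S_i, T_i$ issuing from the startpoint of $G_i$ — again something your construction does not address.

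Second, the innermost-disc/minimality repair is not sound as stated. Your merging move claims that when two rungs cross, ``their combined perimeter is still $\le n$ after cancelling the crossing,'' but the boundary of the union of two circuits of length close to $n$ sharing a short arc can have length close to $2n$, so the merged cell need not exist in $X_n(\mc K)$; hence the reduction is not available, and area-minimality alone does not force the linear intersection pattern of a simple ladder. The paper needs no minimality argument at all: with the geodesic spacing and Lemma~\ref{lem:path-splitting} in hand, the circuits $C_i = U_iQ_iU_{i+1}^{-1}P_i^{-1}$ are embedded, of length at most $48\epsilon\lambda < n$, and intersect only when $|i-j|\le 1$, so their union is already the $1$-skeleton of the desired embedded simple ladder. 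To salvage your write-up you would essentially have to import the geodesic spine and prove an analogue of Lemma~\ref{lem:path-splitting}; as it stands the proposal has a genuine gap.
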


For the proof of Proposition~\ref{prop:upper}, we recall the following well-known fact,  a proof of which can found in
\cite[Chapter III.H, Corollary 1.8]{bridhaef}.
\begin{lem}[Slim rectangles]\label{prop:slim}
Let $\mc K$ be a $\delta$-hyperbolic graph.
For any $\lambda \geq 1$ there exists a constant $\epsilon =\epsilon (\delta, \lambda)>0$ with the following property.
If $P=P_1P_2P_3P_4$ is a closed path such that each $P_i$ is a $\lambda$-quasigeodesic, then each vertex of $P_1$
is contained in the $\epsilon$-neighborhood of the set of vertices of $P_2P_3P_4$.
%such that any $\lambda$-quasigeodesic rectangle in $\mc K$ is $\epsilon$-slim.
\end{lem}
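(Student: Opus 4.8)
The plan is to reduce the quasigeodesic quadrilateral to a genuine geodesic quadrilateral via stability of quasigeodesics, and then to exploit the thinness of geodesic quadrilaterals, which is a direct consequence of the thin-triangles condition defining $\delta$-hyperbolicity. Label the four corners of the closed path $a,b,c,d$, so that $P_1$ runs from $a$ to $b$, $P_2$ from $b$ to $c$, $P_3$ from $c$ to $d$, and $P_4$ from $d$ to $a$ (with the usual convention that degenerate sides or coinciding corners are handled identically, treating the quadrilateral as a triangle or bigon).

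First I would invoke the Morse Lemma, i.e.\ the stability of quasigeodesics in a $\delta$-hyperbolic space: there is a constant $R=R(\delta,\lambda)>0$ so that, for each $i$, the quasigeodesic $P_i$ lies within Hausdorff distance $R$ of \emph{any} geodesic $\gamma_i$ joining its two endpoints. Since a $\lambda$-quasigeodesic is exactly a $(\lambda,0)$-quasigeodesic, the standard statement applies verbatim. The four geodesics $\gamma_1,\gamma_2,\gamma_3,\gamma_4$ then bound a geodesic quadrilateral with corners $a,b,c,d$. Next I would record that geodesic quadrilaterals are $2\delta$-thin: inserting the diagonal geodesic $[a,c]$ splits the quadrilateral into two geodesic triangles, and applying $\delta$-thinness to each triangle shows that every point of $\gamma_1$ lies within $2\delta$ of $\gamma_2\cup\gamma_3\cup\gamma_4$ (one $\delta$ to reach $\gamma_2\cup[a,c]$, a further $\delta$ to pass from $[a,c]$ to $\gamma_3\cup\gamma_4$).

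Finally I would assemble the three estimates. Given a vertex $x$ of $P_1$, stability yields $y\in\gamma_1$ with $\dist(x,y)\le R$; thinness yields $z\in\gamma_2\cup\gamma_3\cup\gamma_4$ with $\dist(y,z)\le 2\delta$; and stability applied to the relevant side yields a point $w$ on the corresponding $P_j$ with $\dist(z,w)\le R$. Hence $\dist(x,w)\le 2R+2\delta$, and since $w$ lies within distance $1$ of a vertex of $P_2P_3P_4$, setting $\epsilon=2R+2\delta+1$ completes the argument. Note that $\epsilon$ depends only on $\delta$ and $\lambda$, as required.

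The genuine content of the lemma is the Morse Lemma, whose proof in a $\delta$-hyperbolic space is itself nontrivial and is the step I would expect to be the main obstacle if one insists on a fully self-contained argument; everything else is a short combination of thin triangles and the triangle inequality. In practice this is precisely why the statement is quoted as standard: the cited Corollary~1.8 of \cite[Ch.~III.H]{bridhaef} already packages stability together with the thin-quadrilateral estimate, so the lemma follows immediately from it.
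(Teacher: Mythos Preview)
Your argument is correct and is exactly the standard route: Morse stability to replace each $\lambda$-quasigeodesic side by a geodesic at Hausdorff distance $R(\delta,\lambda)$, then $2\delta$-thinness of the resulting geodesic quadrilateral via a diagonal, then stability again to return to the quasigeodesic sides. The paper does not supply its own proof of this lemma; it simply records it as a well-known fact and cites \cite[Chapter~III.H, Corollary~1.8]{bridhaef}, which packages precisely the estimate you have written out, so your proposal agrees with the intended argument.
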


\begin{lem}[Fellow traveling]\label{lem:path-splitting}
Let $\mc K$ be a $\delta$-hyperbolic graph.
Let $P$ be an embedded $\lambda$-quasigeodesic, and let $G$ be a geodesic, such that $P,G$ have the same startpoint and endpoint.
Let $\epsilon=\epsilon(\delta,\lambda)$ of Lemma~\ref{prop:slim}.
Suppose  $G$ is the concatenation $G_1 G_2 \cdots  G_{\ell}$ of edge-paths such that:
\[ 10\epsilon \leq |G_i| \leq 20 \epsilon.\]

For each $i$, let $S_i$ be a geodesic from the startpoint of $G_i$ to $P$. Notice that $|S_i| \leq \epsilon$ and is an edge-path.
For $i<\ell$, let $P_i$ be the subpath of $P$ from the endpoint of $S_i$ to the endpoint of $S_{i+1}$, and let
$P_\ell$ be the subpath of $P$ from the endpoint of $S_\ell$ to the endpoint of $P$.

Then $P_i$ and $P_j$ have at most one point in common when $i \neq j$.
Consequently, $P=P_1P_2\dots P_\ell$.
\end{lem}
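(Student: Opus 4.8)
The plan is to put arc-length parameters on $P$ and $G$, reduce the whole statement to one monotonicity assertion, and then prove that assertion using the slim-rectangle estimate of Lemma~\ref{prop:slim}. Parametrize $P$ by arc length as $P\colon[0,|P|]\to\mc K$; this parametrization is \emph{injective} since $P$ is embedded. Parametrize $G$ similarly as $G\colon[0,|G|]\to\mc K$, let $s_i$ be the parameter of the startpoint $G(s_i)$ of $G_i$ (so $s_{i+1}-s_i=|G_i|\in[10\epsilon,20\epsilon]$), and let $t_i\in[0,|P|]$ be the parameter of the endpoint $P(t_i)$ of $S_i$. Since $S_1$ is the trivial path at the common startpoint, $t_1=0$; it is convenient to also set $t_{\ell+1}:=|P|$ and to let $S_{\ell+1}$ be the trivial path at the common endpoint $P(t_{\ell+1})=G(s_{\ell+1})$, so that $P_\ell=P|_{[t_\ell,t_{\ell+1}]}$ once we know $t_\ell<|P|$. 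The lemma then follows from the single claim
\[
t_1<t_2<\cdots<t_{\ell+1}:
\]
granting this, each $P_i$ is the forward subpath $P|_{[t_i,t_{i+1}]}$, so $P_i\cap P_{i+1}=\{P(t_{i+1})\}$, $P_i\cap P_j=\emptyset$ when $|i-j|\geq 2$, and $P_1P_2\cdots P_\ell=P|_{[0,|P|]}=P$.

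Two elementary facts come first. Applying Lemma~\ref{prop:slim} to the closed path consisting of $G$ followed by the reverse of $P$ (with two trivial sides inserted to make four $\lambda$-quasigeodesic sides) shows every vertex of $G$ lies within $\epsilon$ of $P$; hence $\dist_{\mc K}(G(s_i),P(t_i))=|S_i|\leq\epsilon$ for all $1\leq i\leq\ell+1$, and each $S_i$ may be taken to be an edge-path. Second, since $G$ is geodesic, $\dist_{\mc K}(G(s_i),G(s_{i+1}))=|G_i|\geq 10\epsilon$, so $\dist_{\mc K}(P(t_i),P(t_{i+1}))\geq|G_i|-2\epsilon\geq 8\epsilon>0$ and, likewise, $\dist_{\mc K}(P(t_\ell),P(|P|))\geq 9\epsilon$; since $P$ is injective, consecutive entries of the list $t_1,\dots,t_{\ell+1}$ are distinct and $t_\ell<|P|$.

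The heart of the proof is to show that no interior entry of $t_1,\dots,t_{\ell+1}$ is a strict local maximum or minimum. Suppose $t_{k-1}<t_k>t_{k+1}$ for some $2\leq k\leq\ell$. If $t_{k-1}<t_{k+1}$, consider the closed path running along $P|_{[t_{k-1},t_k]}$ from $P(t_{k-1})$ to $P(t_k)$, then along $S_k$ to $G(s_k)$, then along the geodesic $G|_{[s_{k-1},s_k]}$ to $G(s_{k-1})$, then along $S_{k-1}$ back to $P(t_{k-1})$; its four sides are $\lambda$-quasigeodesics (a subpath of $P$ and three geodesics). By Lemma~\ref{prop:slim}, every vertex of $P|_{[t_{k-1},t_k]}$ lies within $\epsilon$ of $S_k\cup G|_{[s_{k-1},s_k]}\cup S_{k-1}$, and since $S_{k-1}$ and $S_k$ have length $\leq\epsilon$ with an endpoint on $G|_{[s_{k-1},s_k]}$, each such vertex lies within $2\epsilon$ of $G|_{[s_{k-1},s_k]}$. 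As $t_{k-1}<t_{k+1}<t_k$, the point $P(t_{k+1})$ is one of these vertices, so $\dist_{\mc K}(P(t_{k+1}),G|_{[s_{k-1},s_k]})\leq 2\epsilon$; combined with $\dist_{\mc K}(G(s_{k+1}),P(t_{k+1}))\leq\epsilon$ this yields $\dist_{\mc K}(G(s_{k+1}),G|_{[s_{k-1},s_k]})\leq 3\epsilon$. But $G$ is geodesic and $s_{k+1}>s_k\geq s_{k-1}$, so every point of $G|_{[s_{k-1},s_k]}$ is at distance $\geq|G_k|\geq 10\epsilon$ from $G(s_{k+1})$ — a contradiction. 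If instead $t_{k+1}<t_{k-1}$, run the identical argument with $P|_{[t_{k+1},t_k]}$, $S_{k+1}$, $G|_{[s_k,s_{k+1}]}$, $S_k$ in place of the four sides above, using that $P(t_{k-1})$ is then a vertex of $P|_{[t_{k+1},t_k]}$ and that $G(s_{k-1})$ is at distance $|G_{k-1}|\geq 10\epsilon$ from $G|_{[s_k,s_{k+1}]}$. Strict local minima are excluded in exactly the same way, with the roles of the two neighbors exchanged.

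Finally, if the list $t_1,\dots,t_{\ell+1}$ were not strictly increasing, let $k$ be least with $t_{k+1}<t_k$; then $k\leq\ell$, and since $t_1=0$ is smallest, $k\geq 2$, while minimality and distinctness give $t_{k-1}<t_k$, so $k$ is a strict interior local maximum, contradicting the previous step. Hence $t_1<t_2<\cdots<t_{\ell+1}$, and the lemma follows. I expect the monotonicity of the $t_i$ to be the only genuine difficulty: a priori a $\lambda$-quasigeodesic can backtrack relative to the geodesic it fellow-travels, and what forecloses this is precisely the hypothesis $|G_i|\geq 10\epsilon$ — comfortably larger than the error $3\epsilon$ incurred by two applications of the fellow-traveling constant $\epsilon$ — together with the embeddedness of $P$, which is what lets us treat $P(t_{k\pm1})$ as a \emph{vertex of} a single subpath of $P$.
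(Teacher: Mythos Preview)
Your proof is correct. The core ingredient --- Lemma~\ref{prop:slim} applied to the rectangle $S_{k-1}\,P|_{[t_{k-1},t_k]}\,S_k^{-1}\,G_{k-1}^{-1}$ together with the $\geq 10\epsilon$ separation along the geodesic $G$ --- is exactly what the paper uses, but the organization is different. The paper first records the two-sided Hausdorff bound $\dist_{\text{Haus}}(P_i,G_i)\leq 2\epsilon$ and then argues by contradiction that if $P_i$ and $P_j$ share more than one point then $G_i$ and $G_j$ must be $4\epsilon$-close, ruling out $|i-j|>1$ immediately and handling $|i-j|=1$ via the containment ``$P_i\subset P_j$ or $P_j\subset P_i$''. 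You instead parametrize and prove directly that the landing parameters $t_1,\dots,t_{\ell+1}$ are strictly increasing, by ruling out an interior local maximum. Your route has the advantage that the final sentence of the lemma (that the $P_i$ actually concatenate to give $P$) is an immediate consequence of monotonicity, whereas in the paper's write-up this step (``the concatenation covering $P$ has no backtracks'') is stated tersely and implicitly relies on the same monotonicity you make explicit. One minor remark: your final paragraph only invokes the local-maximum case, so the separate treatment of local minima is not needed, though it does no harm.
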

\begin{proof}
First, $S_i$ is an edge-path for each $i$ since it starts and ends at 0-cells and is embedded. Moreover, $|S_i|<\epsilon$ for each $i$ by Lemma~\ref{prop:slim}, since $PG^{-1}$ is a $\lambda$-quasigeodesic rectangle.
Using this, and applying Lemma~\ref{prop:slim} again, the Hausdorff distance between $G_i$ and $P_i$ is at most $2\epsilon$, since $S_iP_iS_{i+1}^{-1}G_i^{-1}$ is a $\lambda$-quasigeodesic rectangle.

Suppose that $P_i$ and $P_j$ have more than one point in common.  It follows that the minimal distance between $G_i$ and $G_j$ is less than $4\epsilon$.
Since $G$ is a geodesic, if $|i- j|>1$ then the minimal distance between $G_i$ and $G_j$ is at least $10\epsilon$.
Therefore, we can assume that $|i- j|\leq 1$.  Since $P$ is an embedded path, either $P_i$ is contained in $P_j$ or vice-versa. If $P_i \subset P_j$
then applying Lemma~\ref{prop:slim} twice, and using that each $|S_k|<\epsilon$ twice, we see that
$G_i$ is contained in the $4\epsilon$ neighborhood of $G_j$. Since $G$ is a geodesic, this can only happen
if $i=j$.

The second conclusion follows from the first since the concatenation $P_1P_2\cdots P_\ell$ covering $P$ has no backtracks.
\end{proof}

\begin{figure}\centering
\includegraphics[width=.9\textwidth]{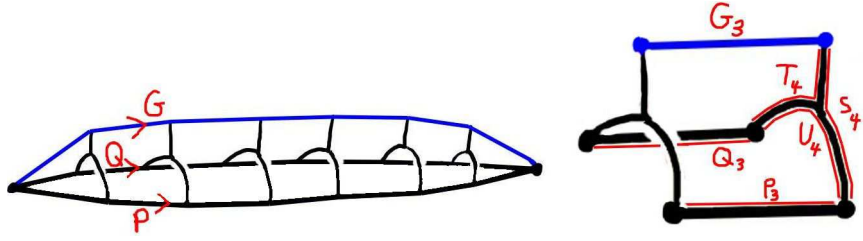}
\caption{Constructing the simple ladder. \label{fig:Dinosaur}}
\end{figure}

\begin{proof}[Proof of Proposition~\ref{prop:upper}]
Suppose that $n> 50\epsilon \lambda$.  Let $G$ be a geodesic between the common startpoints and endpoints of $P,Q$.
We will describe the $1$-skeleton of a disc diagram $D$ between $P$ and $Q$.

If $|G|<10\epsilon$, then $PQ^{-1}$ is a circuit of length at most $20\lambda\epsilon$ and thus bounds a disc diagram with a single $2$-cell yielding
the claim trivially.

We now assume that $|G|\geq 10\epsilon$, and express $G$ as the concatenation $G_1 G_2 \cdots  G_\ell$ with
\[ 10\epsilon \leq |G_i| \leq 20 \epsilon.\]

As described in Lemma~\ref{lem:path-splitting}, the paths $P$  and $Q$ are concatenations  $P_1P_2 \cdots P_\ell$ and $Q_1Q_2\cdots Q_\ell$
with the  following properties (See Figure~\ref{fig:Dinosaur}):
\begin{itemize}
\item For each $i$ there is a geodesic $S_i$ (respectively $T_i$) from the startpoint of $G_i$ to the startpoint of $P_i$ (respectively, the startpoint of $Q_i$)
such that $S_i$ and $P_i$ (respectively $Q_i$) have only one vertex in common.
\item $S_i,T_i$ are edge-paths of length $\leq \epsilon$.
\item Since $S_i$ and $T_i$ are geodesics with the same startpoint,
by possibly re-choose them, we can assume that $S_i=V_i\bar S_i$ and $T_i=V_i\bar T_i$ where $\bar S_i,\bar T_i$ intersect only at their startpoint.
\item The paths $S_i^{-1}T_i \ , \ S_j^{-1}T_j$ are disjoint if $i\not = j$, and hence the same holds for
$\bar S_i^{-1}\bar T_i \ , \ \bar S_j^{-1}\bar T_j$.
\end{itemize}

Let $U_i$ denote the embedded path $\bar S_i^{-1}\bar T_i$ from the startpoint of $P_i$ to the startpoint of $Q_i$. It is immediate that $|U_i| \leq 2\epsilon$. Since $P$ and $Q$ are embedded with disjoint interior, the closed path $C_i = U_iQ_iU_{i+1}^{-1}P_i^{-1}$ is a circuit. Moreover, the circuits $C_i$ and $C_j$ intersect only if $|i-j|\leq 1$.

Notice that
\[ |C_i| \leq 4\epsilon + 2 \lambda (2\epsilon + |G_i|) \leq 48 \epsilon \lambda < n.\]

The union of the circuits $C_1, \dots , C_\ell$ forms the $1$-skeleton of an embedded simple ladder $D$ in $X_n(\mc K)$ between $P$ and $Q$.
\end{proof}

Theorem~\ref{thm:BCP} below is a strong fellow traveling property property for hyperbolic fine graphs admitting a cocompact action. Its proof is an application of the definition of fine graph and  Proposition~\ref{prop:upper}.

\begin{thm}[Strong Fellow Travel Property]\label{thm:BCP}
Let $\mc K$ be a fine hyperbolic graph, and let $\mc G$ be a group acting on $\mc K$ with finitely many orbits of edges.

Suppose the vertex set of $\mc K$ is partitioned into subsets $A$ and $B$ such that no pair of vertices in $A$  are adjacent. 
Let  $\dist$ be a proper metric on $B$ invariant under the action of $G$.

For any $\lambda \geq 1$, there exists a constant $M = M (\lambda) > 0$ with the following property.
If $P_1$ and $P_2$ are embedded $\lambda$-quasigeodesics between the same pair of vertices, then for any $B$-vertex $u$ of $P_1$ there is a $B$-vertex $v$ of $P_2$ such that $\dist (u, v) \leq M$.
\end{thm}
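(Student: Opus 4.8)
The plan is to combine the Simple Ladder Proposition~\ref{prop:upper} with the fineness of $\mc K$. Fixing $\delta$ with $\mc K$ $\delta$-hyperbolic and letting $N=N(\delta,\lambda)$ be the constant of Proposition~\ref{prop:upper}, I would work with a fixed $n>N$ throughout, so that the constant $M$ produced below depends only on $\lambda$ (and the fixed graph $\mc K$).

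The first step — and the one I expect to be the crux — is a purely combinatorial lemma extracted from fineness together with the finiteness of edge orbits: \emph{for this fixed $n$, there are only finitely many $\mc G$-orbits of pairs of vertices of $\mc K$ that lie on a common circuit of length $\le n$.} To prove it, I would choose representatives $e_1,\dots,e_k$ of the finitely many $\mc G$-orbits of edges; by fineness each $e_j$ lies on only finitely many circuits of length $\le n$, so the set $\mc C$ of all circuits of length $\le n$ passing through some $e_j$ is finite. Every circuit of length $\le n$ in $\mc K$ is a $\mc G$-translate of one in $\mc C$ (translate one of its edges onto some $e_j$), and each circuit in $\mc C$ has at most $n$ vertices, which gives the claimed finiteness. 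Since $\dist$ is a $\mc G$-invariant metric on $B$ and $B$ is $\mc G$-invariant, it follows that
\[ M\ :=\ \sup\bigl\{\,\dist(a,b)\ :\ a,b\in B\ \text{lie on a common circuit of length}\le n\,\bigr\}\]
is finite, being a maximum of $\dist$ over finitely many orbit representatives (set $M=0$ if there are no such pairs). I will show this $M=M(\lambda)$ works.

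Next I would reduce to the hypotheses of Proposition~\ref{prop:upper}. Given embedded $\lambda$-quasigeodesics $P_1,P_2$ with common endpoints and a $B$-vertex $u$ of $P_1$: if $u$ already lies on $P_2$, we are done. Otherwise let $x$ be the last vertex of $P_1$ (in the order along $P_1$) that precedes $u$ and lies on $P_2$, and $y$ the first vertex of $P_1$ that follows $u$ and lies on $P_2$ — both exist because the endpoints of $P_1$ lie on $P_2$, and $x,u,y$ are distinct and occur in this order. Taking $P_1'\subset P_1$ and $P_2'\subset P_2$ to be the subpaths from $x$ to $y$, these are embedded $\lambda$-quasigeodesics with the same startpoint and endpoint, $u$ is an interior vertex of $P_1'$, and by the choice of $x,y$ the interiors of $P_1'$ and $P_2'$ are disjoint. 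Proposition~\ref{prop:upper} then supplies an embedded simple-ladder disc diagram $D\to X_n(\mc K)$ between $P_1'$ and $P_2'$, with $2$-cells $R_1,\dots,R_\ell$.

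Finally I would locate $u$ and its partner on a single short circuit. Since the arcs $R_i\cap P_1'$ cover $P_1'$, the vertex $u$ lies on some $R_i$; identifying $\partial R_i$ via the embedding $D\to X_n(\mc K)$ with a circuit $c_i$ of $\mc K$ of length $\le n$, we get that $u$ is a vertex of $c_i$. Because $R_i\cap P_2'$ is a nontrivial boundary arc, it contains an edge $f$ of $c_i$ lying on $P_2'\subseteq P_2$; since $A$ contains no pair of adjacent vertices, $f$ has an endpoint $v\in B$. Then $v$ is a $B$-vertex of $P_2$ and both $u$ and $v$ lie on the circuit $c_i$ of length $\le n$, so $\dist(u,v)\le M$. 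The only delicate points beyond the fineness lemma are routine: that a subpath of an embedded $\lambda$-quasigeodesic is again an embedded $\lambda$-quasigeodesic, and that the nontrivial arc $R_i\cap P_2'$ genuinely contains an edge whose two endpoints are vertices of $P_2$, one of which must avoid the independent set $A$.
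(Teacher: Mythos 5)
Your proposal is correct and follows essentially the same route as the paper: apply Proposition~\ref{prop:upper} to get an embedded simple ladder in $X_n(\mc K)$, note that each $2$-cell boundary is a circuit of length at most $n$ meeting both quasigeodesics in nontrivial arcs, use the independence of $A$ to find $B$-vertices on both sides, and bound $\dist$ by finiteness of $\mc G$-orbits of such circuits together with $\mc G$-invariance of $\dist$. The only difference is that you spell out two steps the paper treats briefly — the proof that fineness plus finitely many edge orbits gives finitely many orbits of circuits of length $\le n$, and the subpath reduction justifying the ``no common interior points'' assumption — which is a welcome elaboration rather than a new approach.
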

\begin{figure}\centering
\includegraphics[width=.6\textwidth]{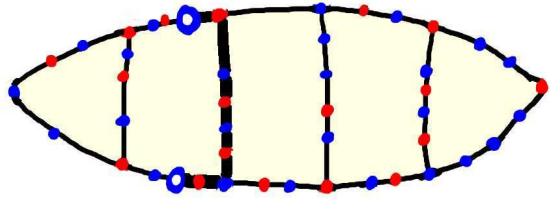}
\caption{The bold subpath of the 2-cell boundary connects a $b$-vertex on one side to a $b$-vertex on the other side. \label{fig:CloseInLadder}}
\end{figure}
\begin{proof}
Let $n$ be sufficiently large so that  $X=X_n(\mc K)$ satisfies the conclusion of Proposition~\ref{prop:upper} for the given $\lambda$.
Since $\mc G$ acts cocompactly on the fine graph $\mc K$, there is a finite number of boundary cycles of 2-cells in $X$ up to the action of $\mc G$.
Combining this with $\dist$ is $\mc G$-equivariant shows that  there is constant $M >0$ such that $\dist (u, v) \leq M$ for any pair of $B$-vertices $u,v$  in the boundary cycle of a 2-cell.

Let $P_1$ and $P_2$ be embedded $\lambda$-quasigeodesics in $\mc K$ with the same startpoint and endpoint.
Without loss of generality, assume that $P_1$ and $P_2$  have no common interior points.

By Proposition~\ref{prop:upper}, there is an embedded simple ladder $D\rightarrow X$ between $P_1$ and $P_2$.
Combining that each 2-cell of $D$ intersects both $P_1$ and $P_2$ in nontrivial arcs, and that no two $A$-vertices are connected by an edge,
it follows that  each  2-cell of $D$ has $B$-vertices in $P_1$ and in $P_2$ (See Figure~\ref{fig:CloseInLadder}).  Since each $B$-vertex of $P_1$ belongs to a 2-cell of $D$, it follows that for any $B$-vertex $u$ of $P_1$ there is a $B$-vertex $v$ of $P_2$ such that $\dist (u, v) \leq M$.
\end{proof}

\section{Equivalences of Formulations of Relative  quasiconvexity}\label{sec:quasiconvexity}

In this section we prove Theorem~\ref{thm:main} on the equivalence between relative quasiconvexity of Definition~\ref{def:ours}, labelled by $(\text{Q-0})$, and relative quasiconvexity of Definition~\ref{def:O}, labelled by $(\overline{\text{Q-1}})$,  in the context of countable relatively hyperbolic groups.

The argument introduces  two auxiliary definitions of relative quasiconvexity, labelled by $\widehat{\text{Q-1}}$ and $\widehat{\text{Q-0}}$, and then the theorem follows after proving the following equivalences:
\begin{equation}\label{eq:equivalences}
 \overline{\text{Q-1}}  \Longleftrightarrow   \widehat{\text{Q-1}} \Longleftrightarrow  \widehat{\text{Q-0}}  \Longleftrightarrow   \text{Q-0} .
\end{equation}

The section is divided in six short parts as follows.  First  we recall the notion of coned-off Cayley graph, and deduce a strong version of the fellow travel property using  the main result  of Section~\ref{sec:ladder}. The second part shows that relative quasiconvexity of Definition~\ref{def:O}, labelled by $(\overline{\text{Q-1}})$, implies finite relative generation. The third part state the two auxiliary definitions.  The remaining three parts correspond to each of the three equivalences in illustration~\eqref{eq:equivalences}.

For the rest of the section, let $\mc G$ be a hyperbolic group relative to a collection of subgroups  $\mb P$, let $S$ be a finite relative generating set for $(\mc G, \mb P)$, and let $\dist$ be a proper left-invariant metric on $\mc G$.

\subsection{The Coned-off Cayley Graph}

\begin{defn}[Cayley Graph]
Let $S$ be a subset of a group $\mc G$, and assume that $S$ is closed under inverses. The \emph{Cayley graph $\Gamma (\mc G, S)$} is an oriented labelled 1-complex with vertex set $G$ and edge set $\mc G\times S$. An edge $(g,s)$ goes from the vertex $g$ to the vertex $gs$ and has label $s$. Observe that $\Gamma (\mc G, S)$ is connected if and only if $S$ is a generating set of $\mc G$.
\end{defn}

The notion of coned-off Cayley graph is originally due to Farb~\cite{Fa98} and the following generalization is taking from~\cite[Sec. 3.4]{HK08}.

\begin{defn}[Coned-off Cayley Graph $\widehat \Gamma$]
Let $S$ be a finite relative generating set for $(\mc G, \mb P)$ and assume that $S$ is closed under inverses. The \emph{coned-off Cayley graph} $\gps$ is the graph constructed from the Cayley graph $\Gamma=\Gamma (\mc G, S)$ as follows: For each left coset $g\mc P$ with $g \in \mc G$ and $\mc P \in \mb P$, add a new vertex $v(g\mc P)$ to $\Gamma$, and add a $1$-cell from this new vertex to each element of $g \mc P$. These new vertices of $\gps$ that are not in $\Gamma$ are called \emph{cone-vertices}. Each $1$-cell  of $\gps$ between an element of $\mc G$ and a cone vertex is a \emph{cone-edge}. Note that the coned-off Cayley graph $\gps$ is connected since $S$ is a relative generating set for $(\mc G,\mb P)$.

There is a related (genuine) Cayley graph of $\mc G$ with respect to the generating set defined as the disjoint union $S \sqcup \bigsqcup_i \mc P_i$ which we denote by $\overline \Gamma (\mc G, S\cup \mb P)$.
\end{defn}

Proposition~\ref{prop:Dahmani} below appeared in Dahmani's thesis~\cite[Proof Lemma A.4]{Da03} and in Hruska's work~\cite[Proof (RH-4) $\Rightarrow$ (RH-5)]{HK08}. We included a proof using the results of Section~\ref{sec:fine-graphs}.
\begin{prop}\label{prop:Dahmani}
Suppose that $\mc K$ is a $(\mc G, \mb P)$-graph. If $S$ is a finite relative generating set for $(\mc G, \mb P)$, then there exists a $(\mc G, \mb P)$-graph $\mc K'$
such that $\mc K$ and $\gps$ both embed equivariantly and simplicially into $\mc K'$.

In particular, $\gps$ is a $(\mc G, \mb P)$-graph.
\end{prop}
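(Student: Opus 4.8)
### Proof proposal for Proposition~\ref{prop:Dahmani}

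The plan is to show that $\gps$ is, up to equivariant simplicial embedding, obtained from the disjoint union of $\mc K$ and $\gps$ by gluing along parabolic vertices, and then to verify that the resulting graph is still a $(\mc G,\mb P)$-graph by appealing to the arc-attachment lemmas of Section~\ref{sec:fine-graphs}. Concretely, I would first establish that $\gps$ is itself a $(\mc G,\mb P)$-graph: it is connected (stated in the definition), $\mc G$ acts on it with finitely many orbits of edges (the $S$-edges contribute $|S|$ orbits, and the cone-edges contribute one orbit per $\mc P\in\mb P$, since $\mc G$ acts transitively on the cosets $g\mc P$), and edge stabilizers are finite (an $S$-edge has trivial stabilizer; a cone-edge from $g$ to $v(g\mc P)$ has stabilizer contained in $\mc G_g=\{e\}$, hence trivial). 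The cone-vertices $v(g\mc P)$ have stabilizer exactly $g\mc P g^{-1}$, and these realize all conjugates of the $\mc P\in\mb P$, while the $\mc G$-vertices have trivial stabilizer. So the only thing missing to call $\gps$ a $(\mc G,\mb P)$-graph is that it is fine and hyperbolic — which is precisely what the ``in particular'' conclusion asserts and which I expect to obtain as a corollary of the main construction rather than prove directly.

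The main step is the construction of $\mc K'$. Mimicking the proof of Theorem~\ref{thm:main4}: for each $\mc P\in\mb P$ choose a vertex $u_{\mc P}\in\mc K$ with $\mc G_{u_{\mc P}}=\mc P$, and note that the cone-vertex $v(\mc P)\in\gps$ also has stabilizer $\mc P$; take the $\mc G$-equivariant bijection $g u_{\mc P}\mapsto g\, v(\mc P)$ between the parabolic-vertex orbits $V_{\mb P}(\mc K)$ and the cone-vertex set of $\gps$. (For infinite $\mc P$ these choices are forced by Lemma~\ref{prop:bijection}; for finite $\mc P$ any choice works.) Let $\mc K'$ be the quotient of the disjoint union $\mc K\sqcup\gps$ by this identification. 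Then $\mc G$ acts on $\mc K'$ with finitely many orbits of edges and finite edge stabilizers, $\mc K$ and $\gps$ embed equivariantly and simplicially, and the vertex stabilizers of $\mc K'$ are exactly those of $\mc K$ (the identification does not change stabilizers, since identified vertices have equal stabilizers; and every $\mc G$-vertex of $\gps$ gets identified with nothing new — wait, the $\mc G$-vertices of $\gps$ are the elements of $\mc G$, which are \emph{not} in $V_{\mb P}(\mc K)$, so they remain as trivial-stabilizer vertices, consistent with $\mc K$ already having trivial-stabilizer vertices). Hence $\mathbb P$ is still a set of conjugacy-class representatives of infinite vertex stabilizers.

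It remains to see that $\mc K'$ is connected, fine, and hyperbolic. Connectedness: both $\mc K$ and $\gps$ are connected and they share the (nonempty) parabolic vertex set. For fineness and hyperbolicity I would argue that $\mc K'$ is obtained from $\mc K$ by finitely many $\mc G$-equivariant arc attachments. By Corollary~\ref{cor:infinite-valence-2} every infinite-valence vertex of $\mc K'$ lies in $\mc K$ (the $\mc G$-vertices of $\gps$ have finite valence in $\mc K'$: valence $|S|\cdot(\text{stuff})$ plus one cone-edge per $\mc P\in\mb P$ containing that group element — this is finite since $S$ and $\mb P$ are finite; the cone-vertices for infinite $\mc P$ are already in $\mc K$). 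So $\mc K'-\mc K$ consists of finite-valence vertices, and since there are finitely many $\mc G$-orbits of edges, $\mc K'$ is built from $\mc K$ by finitely many equivariant attachments of arcs each having at least one endpoint in $\mc K$. Applying Lemmas~\ref{prop:arc-attachment-2} and~\ref{prop:arc-attachment}, since $\mc K$ is hyperbolic and fine, so is $\mc K'$, and $\mc K\subset\mc K'$ is a quasi-isometric embedding. Then $\gps\subset\mc K'$ is a subgraph of a fine graph, hence fine; and it is $\mc G$-cocompact on a connected graph, so by Lemma~\ref{prop:removals} (removing finite-valence vertices of $\mc K'$ not in $\gps$) — or more directly, since $\mc G$ acts cocompactly and $\gps$ is quasi-isometrically embedded by the same removal argument — $\gps$ is hyperbolic. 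This gives the ``in particular'' statement.

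The main obstacle I anticipate is the bookkeeping needed to confirm that $\mc K'-\mc K$ really does consist only of finite-valence vertices and finitely many edge orbits, so that the arc-attachment machinery applies cleanly; in particular one must check that identifying a parabolic vertex $u_{\mc P}$ of $\mc K$ with the cone-vertex $v(\mc P)$ of $\gps$ does not create an infinite-valence vertex outside $\mc K$ — it does not, because the identified vertex \emph{is} in $\mc K$, and all genuinely new vertices (the elements of $\mc G$ viewed in $\gps$, and cone-vertices of finite $\mc P$) have finite valence. A secondary subtlety is ensuring the embedding $\gps\hookrightarrow\mc K'$ is simplicial, i.e.\ that no two distinct cone-edges become parallel after the identification; this follows because distinct edges of $\gps$ remain distinct in the disjoint union and the identification only merges vertices, not edges, and by the almost-malnormality Lemma~\ref{lem:malnormality} the merging is injective on the relevant vertex orbits.
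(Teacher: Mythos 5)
Your proposal is correct and follows essentially the same route as the paper's own proof: build a common ambient $(\mc G,\mb P)$-graph $\mc K'$ by identifying the parabolic vertices of $\mc K$ with the cone-vertices of $\gps$ (in the style of Theorem~\ref{thm:main4}), observe that $\mc K'$ arises from $\mc K$ by finitely many equivariant arc attachments so that Lemmas~\ref{prop:arc-attachment-2} and~\ref{prop:arc-attachment} apply, and then recover $\gps$ itself as a $(\mc G,\mb P)$-graph via Lemma~\ref{prop:bijection} and finitely many $\mc G$-removals (Lemma~\ref{prop:removals}). The only differences are cosmetic --- the paper realizes the Cayley vertices by $\mc G$-attaching triangles around an edge orbit instead of gluing the disjoint union, and your citation of Corollary~\ref{cor:infinite-valence-2} is formally premature since $\mc K'$ is not yet known to be a $(\mc G,\mb P)$-graph, but the direct finite-valence count you give in parentheses is exactly the correct substitute.
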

\begin{figure}\centering
\includegraphics[width=.25\textwidth]{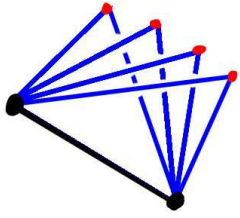}
\caption{Adding triangles around each edge. \label{fig:TrianglesAround}}
\end{figure}
\begin{proof}
We will perform finitely many arc $\mc G$-attachments to $\mc K$ to obtain a graph $\mc K'$ where $\widehat \Gamma= \gps$ equivariantly embeds in $\mc K'$. Then Lemmas~\ref{prop:arc-attachment-2} and~\ref{prop:arc-attachment} will imply that $\mc K'$ is a $(\mc G, \mb P)$-graph, and that $\mc K \hookrightarrow  \mc K'$ is a quasi-isometric embedding.

The graph $\mc K'$ is obtained as follows. By $\mc G$-attaching a triangle around an edge of $\mc K$, we obtain a new $\mc K_1$ where $\mc G$ acts freely on triangle-tops. See Figure~\ref{fig:TrianglesAround}.  We identify $\mc G$ with a triangle-top orbit.  Performing an additional edge $\mc G$-attachment to $\mc K_1$ for each element of $S$ yields a graph $\mc K_2$.  Finally, for each group $\mc P \in \mb P$, choose a vertex $v_{_{\mc P}} \in \mc K$ stabilize by $\mc P$ and perform a $\mc G$-attachment of an edge between $1_{\mc G}$ and $v_{_{\mc P}}$ of $\mc K_2$ to obtain a graph $\mc K'$.  Observe that $\widehat \Gamma$ equivariantly embeds in $\mc K'$, and that $\mc K'$ is a $(\mc G, \mb P)$-graph by Lemmas~\ref{prop:arc-attachment-2} and~\ref{prop:arc-attachment}.

By Lemma~\ref{prop:bijection}, all infinite valence vertices of $\mc K'$ are contained in $\widehat \Gamma$.
Since $\mc K'$ has finitely many $\mc G$-orbits of edges, $\widehat \Gamma$ is obtained from $\mc K'$ after finitely many $\mc G$-removals of vertices of finite valence together with their adjacent vertices. Since $\mc K'$ is a $(\mc G, \mb P)$-graph,  Lemma~\ref{prop:removals} implies that $\gps$ is a $(\mc G, \mb P)$-graph and that $\widehat \Gamma \hookrightarrow  \mc K'$ is a $\mc G$-equivariant quasi-isometry.
\end{proof}

\begin{prop}[Strong Fellow Travel for $\widehat \Gamma$]\label{prop:BCP}
Let $S$ be a finite relative generating set for $(\mc G, \mb P)$, and let $\dist$ be a proper left-invariant metric on $\mc G$.

For any $\lambda \geq 1$, there exists a constant $M = M (\lambda) > 0$ with the following property.
If $P_1$ and $P_2$ are embedded $\lambda$-quasigeodesics in $\widehat \Gamma$ between the same pair of elements of $\mc G$,
then for each element $g _1\in \mc G$ of $P_1$, there is an element $g_2\in \mc G$ of $P_2$ such that $\dist (g_1, g_2) \leq M$.
\end{prop}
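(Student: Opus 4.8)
The plan is to deduce Proposition~\ref{prop:BCP} as a direct application of the Strong Fellow Travel Property (Theorem~\ref{thm:BCP}) to the coned-off Cayley graph $\widehat\Gamma = \gps$. First I would invoke Proposition~\ref{prop:Dahmani} to know that $\widehat\Gamma$ is a $(\mc G,\mb P)$-graph; in particular it is connected, fine, and hyperbolic, and $\mc G$ acts on it with finitely many orbits of edges and finite edge stabilizers. This is precisely the kind of graph to which Theorem~\ref{thm:BCP} applies.

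Next I would set up the vertex partition required by Theorem~\ref{thm:BCP}. Take $B = \mc G$ (the genuine group elements, i.e.\ the vertices of $\widehat\Gamma$ coming from the Cayley graph $\Gamma(\mc G,S)$) and take $A$ to be the set of cone-vertices $\{v(g\mc P) : g\in\mc G,\ \mc P\in\mb P\}$. The key point is that no two $A$-vertices are adjacent: every edge of $\widehat\Gamma$ is either an edge of $\Gamma(\mc G,S)$ (joining two elements of $\mc G$, hence two $B$-vertices) or a cone-edge (joining an element of $\mc G$ to a cone-vertex, hence a $B$-vertex to an $A$-vertex), so $A$ is indeed an independent set. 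The action of $\mc G$ on $\widehat\Gamma$ restricts to the left-multiplication action on $B = \mc G$, and $\dist$ is a proper left-invariant metric on $\mc G = B$ by hypothesis, which is $\mc G$-invariant in the required sense.

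With these identifications in place, Theorem~\ref{thm:BCP} yields, for each $\lambda\ge 1$, a constant $M = M(\lambda) > 0$ such that whenever $P_1, P_2$ are embedded $\lambda$-quasigeodesics in $\widehat\Gamma$ between the same pair of vertices, every $B$-vertex of $P_1$ lies within $\dist$-distance $M$ of some $B$-vertex of $P_2$. Specializing to the case where the common endpoints are elements of $\mc G$ (which are $B$-vertices), and reading ``$B$-vertex'' as ``element of $\mc G$ lying on the path'', gives exactly the statement of Proposition~\ref{prop:BCP}.

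There is essentially no hard part here, as the substance has already been done in Theorem~\ref{thm:BCP} and Proposition~\ref{prop:Dahmani}; the only thing requiring any care is verifying the hypotheses of Theorem~\ref{thm:BCP} for $\widehat\Gamma$ with the partition $A \sqcup B$ — namely that $\widehat\Gamma$ is fine and hyperbolic with a cofinite edge action (from Proposition~\ref{prop:Dahmani}) and that the cone-vertices form an independent set (immediate from the construction of $\widehat\Gamma$). So the ``main obstacle'' is merely bookkeeping: making sure the metric $\dist$ on $\mc G$ is correctly viewed as the metric on the $B$-part, and that the endpoints in Proposition~\ref{prop:BCP}, being group elements, are $B$-vertices so that the conclusion of Theorem~\ref{thm:BCP} applies to them.
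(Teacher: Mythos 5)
Your proposal is correct and matches the paper's own proof, which likewise cites Proposition~\ref{prop:Dahmani} to see that $\gps$ is a $(\mc G,\mb P)$-graph and then applies Theorem~\ref{thm:BCP} with $B=\mc G$ and $A$ the set of cone-vertices. Your verification that the cone-vertices form an independent set and that $\dist$ serves as the required proper $\mc G$-invariant metric on $B$ is exactly the bookkeeping the paper leaves implicit.
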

\begin{proof} \label{rem:BCPforHat}
By Proposition~\ref{prop:Dahmani}, the coned-off Cayley graph $\gps$ is a $(\mc G, \mb P)$-graph.
Proposition~\ref{prop:BCP} follows from Theorem~\ref{thm:BCP}  by declaring $B =\mc G$  and  $A$ to be the collection of cone-vertices.
\end{proof}

%In~\cite{Fa98}, a path in $\widehat \Gamma$ is said to backtrack if there is a cone-vertex for which the path pass more than once.  The embedding requirement of  the paths in Proposition~\ref{prop:BCP} guarantees that there is no backtracking in this sense.

Corollary~\ref{prop:BCP-2} below is an analogous result to Proposition~\ref{prop:BCP} for the Cayley graph $\bar \Gamma (\mc G, S \cup \mb P)$.
A version of this result for the case that $\mc G$ is finitely generated appears as~\cite[Prop. 3.15]{Os06}. Its hypothesis requires that the quasigeodesics do not backtrack in the following sense based on~\cite[Def. 3.9]{Os06}:

\begin{defn}[Phase Vertices and Backtracking in $\bar \Gamma$] \label{defn:backtracking}
Let $Q$ be a path in $\bar \Gamma (\mc G, S \cup \mb P)$.  For a subgroup $\mc P \in \mb P$, a nontrivial subpath $U$ of $Q$ is called a \emph{$\mc P$-component of $Q$} if all edges of $U$ are labelled by elements of $\mc P$, in particular, all vertices of $U$ belong to the same left coset of $\mc P$.

An $\mc P$-component of $Q$ is called \emph{maximal} if it is not a proper subpath of $\mc P$-component of $Q$. The path $Q$ is said to \emph{backtrack} if $Q$ has two disjoint and maximal $\mc P$-components for some $\mc P \in \mb P$.  A vertex $v$ of $Q$ is called a \emph{phase vertex} if $v$ is not an interior vertex of a $\mc P$-component of $Q$.
\end{defn}
\begin{rem}
Since $\bar \Gamma$ is a Cayley graph of $G$ with respect to the disjoint union $S\sqcup \bigsqcup_{\mc P \in \mb P} \mc P$,
a $\mc P$-component and a $\mc P'$-component of $Q$ can not have edges in common if $\mc P$ and $\mc P'$ are different.
%Observe also that if $Q$ is a geodesic path, then it  does not backtrack.
\end{rem}

\begin{lem}[$\bar \Gamma$ and $\widehat \Gamma$ are quasi-isometric]\label{lem:qi}
The identity map of $\mc G$ induces a natural $(2,0)$-quasi-isometry $\varphi: \bar \Gamma \rightarrow \widehat \Gamma$ with the following property.
If $P$ is an embedded path, with only phase vertices, and without backtracking in $\bar \Gamma$, then $\varphi (P)$ is embedded in $\widehat \Gamma$.
\end{lem}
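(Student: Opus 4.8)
The plan is to write the natural map down explicitly and then treat its two assertions separately, the embeddedness statement being the only non-formal point.

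Define $\varphi\colon \bar \Gamma \to \widehat \Gamma$ to be the identity on the common vertex set $\mc G$, to carry each $S$-edge of $\bar \Gamma$ to the corresponding $S$-edge of $\Gamma(\mc G,S)\subset\widehat\Gamma$, and to carry each edge of $\bar\Gamma$ from $g$ to $gp$ labelled by an element $p$ of some $\mc P\in\mb P$ to the length-$2$ path $g \to v(g\mc P)\to gp$ through the cone vertex of the coset $g\mc P$; this is well defined since $g$ and $gp$ lie in $g\mc P$ and are therefore both joined to $v(g\mc P)$ by cone-edges. Since $\varphi$ multiplies each edge length by at most $2$, we get $\dist_{\widehat\Gamma}(g,h)\le 2\,\dist_{\bar\Gamma}(g,h)$ for all $g,h\in\mc G$. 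For the reverse inequality, start with a path in $\widehat\Gamma$ between two elements of $\mc G$; every cone vertex is incident only to cone-edges, so the path meets cone-edges in consecutive pairs surrounding cone vertices, and replacing each such pair (from $g_1$ to $g_2$, both lying in a common coset $g\mc P$) by the single $\mc P$-edge from $g_1$ to $g_2$, or deleting it when $g_1=g_2$, while keeping all $S$-edges, produces a path in $\bar\Gamma$ of no greater length; hence $\dist_{\bar\Gamma}(g,h)\le\dist_{\widehat\Gamma}(g,h)$. As every cone vertex is adjacent to a vertex of $\mc G$, the image of $\varphi$ is $1$-dense, so $\varphi$ is a $(2,0)$-quasi-isometry.

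For the embeddedness assertion, the key claim is that under the hypotheses, for each $\mc P\in\mb P$ the path $P$ contains at most one edge labelled by an element of $\mc P$. Since every vertex of $P$ is a phase vertex, no two consecutive edges of $P$ are labelled by elements of the same subgroup in $\mb P$; consequently each $\mc P$-component of $P$ consists of a single edge and is maximal. If $P$ contained two distinct $\mc P$-edges they would then be non-consecutive, hence vertex-disjoint since $P$ is embedded, hence two disjoint maximal $\mc P$-components of $P$ --- contradicting that $P$ does not backtrack (Definition~\ref{defn:backtracking}).

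It remains to read off embeddedness of $\varphi(P)$. The vertices of $\varphi(P)$ are the vertices of $P$, which are pairwise distinct because $P$ is embedded, together with, for each $\mc P$-edge $e$ of $P$ joining two elements of some coset $g\mc P$, the cone vertex $v(g\mc P)$. By the claim there is at most one such $e$ for each $\mc P$, so at most one cone vertex $v(g\mc P)$ arises from each $\mc P$; moreover cone vertices attached to distinct members of $\mb P$ are distinct, since an equality of cosets $x\mc P = x'\mc P'$ with $\mc P,\mc P'\in\mb P$ forces $\mc P=\mc P'$ (choosing $z$ in the common coset gives $z\mc P=z\mc P'$, hence $\mc P=\mc P'$, and distinct members of $\mb P$ are distinct subgroups). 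Since no cone vertex lies in $\mc G$, all vertices of $\varphi(P)$ are pairwise distinct, so $\varphi(P)$ is embedded. The only real work is the claim of the previous paragraph: one must unwind the definitions of phase vertex and of backtracking to see that "only phase vertices" already forces every $\mc P$-component to be a single maximal edge, so that having two $\mc P$-edges would in itself constitute backtracking.
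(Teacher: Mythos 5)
Your proof is correct and follows essentially the same route as the paper's: the same definition of $\varphi$ (sending each $\mc P$-labelled edge to a length-$2$ path through the cone vertex of the corresponding coset), and the same key observation that an embedded $P$ with only phase vertices has only single-edge maximal $\mc P$-components, so a repeated cone vertex in $\varphi(P)$ would force backtracking. The paper states this in two sentences; you have simply supplied the routine details (the $(2,0)$ constants and the distinctness of cone vertices) that it leaves implicit.
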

\begin{proof}
The quasi-isometry $\varphi$ maps edges labelled by elements of $S$ to the corresponding edge in $\widehat \Gamma$,
and edges labelled by an element of $\mc P \in \mb P$ are map to a $2$-path passing through a cone-vertex of a left coset of $\mc P$.
Observe that if $P$ is embedded and $\varphi (P)$ is not embedded, then either $P$ contains a $\mc P$-component of length at least two, or $P$ backtracks.
\end{proof}

\begin{cor}[Strong Fellow Travel for $\bar \Gamma$]\label{prop:BCP-2}
Let $S$ be a finite relative generating set for $(\mc G, \mb P)$, and let $\dist$ be a proper left-invariant metric on $\mc G$.

For any $\lambda \geq 1$, there exists a constant $M = M (\lambda) > 0$ with the following property.
If $P_1$ and $P_2$ are embedded $\lambda$-quasigeodesics without backtracking in $\bar \Gamma$ between the same pair of elements of $\mc G$,
then for each phase vertex $g_1$ of $P_1$, there is a phase vertex $g_2$ of $P_2$ such that $\dist (g_1, g_2) \leq M$.
\end{cor}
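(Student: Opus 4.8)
The plan is to deduce Corollary~\ref{prop:BCP-2} directly from Proposition~\ref{prop:BCP} by transporting the two quasigeodesics $P_1,P_2$ in $\bar\Gamma$ across the quasi-isometry $\varphi:\bar\Gamma\to\widehat\Gamma$ of Lemma~\ref{lem:qi}. First I would set $\lambda' = 2\lambda$ (or some fixed linear function of $\lambda$) and let $M_0 = M_0(\lambda')$ be the constant supplied by Proposition~\ref{prop:BCP} for the coned-off graph $\widehat\Gamma$; the output constant $M=M(\lambda)$ will be $M_0$. The key observation making the transport work is the second sentence of Lemma~\ref{lem:qi}: since $P_1$ and $P_2$ are embedded, have only phase vertices, and do not backtrack, their images $\varphi(P_1)$ and $\varphi(P_2)$ are again embedded paths in $\widehat\Gamma$. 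One must also check that $\varphi(P_i)$ is a $\lambda'$-quasigeodesic in $\widehat\Gamma$; this is immediate because $\varphi$ is a $(2,0)$-quasi-isometry and replacing a $\mc P$-edge by a $2$-path through a cone vertex only doubles lengths, so the quasigeodesic constant degrades by at most a factor of $2$. (If one is worried about the additive constant becoming nonzero, one can simply invoke Proposition~\ref{prop:BCP} with a $(\lambda',\epsilon')$-quasigeodesic version, or absorb the constant by increasing $\lambda'$; either way the argument is routine.)

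The main step is then bookkeeping about which vertices get tracked. Proposition~\ref{prop:BCP} applied to $\varphi(P_1),\varphi(P_2)$ says: every $\mc G$-vertex $g_1$ of $\varphi(P_1)$ has a $\mc G$-vertex $g_2$ of $\varphi(P_2)$ with $\dist(g_1,g_2)\le M_0$. Now the $\mc G$-vertices appearing on $\varphi(P_i)$ are exactly the vertices of $P_i$ that lie in $\mc G$ (the new cone vertices introduced by $\varphi$ are never in $\mc G$). Among the vertices of $P_i$ lying in $\mc G$, the phase vertices are certainly included, so given a phase vertex $g_1$ of $P_1$, viewing it as a $\mc G$-vertex of $\varphi(P_1)$ produces a $\mc G$-vertex $g_2$ of $\varphi(P_2)$ at distance $\le M_0$; and $g_2$, being a $\mc G$-vertex of $\varphi(P_2)$, is a vertex of $P_2$ lying in $\mc G$. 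The only remaining subtlety is that we need $g_2$ to be a \emph{phase} vertex of $P_2$, not merely a $\mc G$-vertex. If $g_2$ happens to be an interior vertex of a maximal $\mc P$-component of $P_2$, replace it by the (phase) endpoint $g_2'$ of that component; since all vertices of a $\mc P$-component lie in the same left coset $g_2\mc P$, they are pairwise joined by cone-edges through a single cone vertex, hence at $\widehat\Gamma$-distance $\le 2$ of each other, and one can further enlarge $M_0$ by a controlled amount (bounded in terms of how far $\dist$ can spread over a bounded $\widehat\Gamma$-ball, which is finite since $\dist$ is proper and the relevant ball meets finitely many $\mc G$-orbits) to conclude $\dist(g_1,g_2')\le M$.

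The step I expect to be the only genuine friction point is this last phase-vertex adjustment: one has to be slightly careful that replacing a $\mc G$-vertex by a phase vertex of the same $\mc P$-component costs a uniformly bounded amount in the \emph{proper metric} $\dist$ (not merely in $\widehat\Gamma$-distance, where it is obviously $\le 2$). This is where properness of $\dist$ together with the $\mc G$-cocompactness of $\widehat\Gamma$ (finitely many edge orbits) is invoked — exactly the same mechanism already used in the proof of Theorem~\ref{thm:BCP} to produce $M$ from finitely many $2$-cell boundary cycles. Everything else is a direct pushforward/pullback along $\varphi$, so the proof is short.
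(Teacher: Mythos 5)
There is a genuine gap, and it sits exactly at the place you flagged as ``the only friction point.'' First, note an internal inconsistency: the hypothesis of Corollary~\ref{prop:BCP-2} does \emph{not} say that $P_1,P_2$ have only phase vertices, yet your appeal to the second sentence of Lemma~\ref{lem:qi} (to get that $\varphi(P_i)$ is embedded, as Proposition~\ref{prop:BCP} requires) needs precisely that hypothesis. If $P_i$ contains a $\mc P$-component of length $\geq 2$, then $\varphi(P_i)$ passes through the same cone vertex more than once and is not embedded, so Proposition~\ref{prop:BCP} does not apply to it directly. You then acknowledge later that $P_2$ may have non-phase vertices, which contradicts the assumption you used to get embeddedness.

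Second, and more seriously, the proposed repair --- replacing a non-phase $\mc G$-vertex $g_2$ by a phase endpoint $g_2'$ of its $\mc P$-component and enlarging $M_0$ ``by a controlled amount bounded by how far $\dist$ can spread over a bounded $\widehat\Gamma$-ball'' --- is false when $\mc P$ is infinite. The vertices $g_2$ and $g_2'$ lie in a common left coset $g\mc P$, so they are indeed at $\widehat\Gamma$-distance $\leq 2$, but $g_2^{-1}g_2'$ ranges over an infinite subgroup, so $\dist(g_2,g_2')=\dist(1,g_2^{-1}g_2')$ is unbounded: a $\widehat\Gamma$-ball of radius $2$ about a cone vertex contains an entire infinite parabolic coset, on which any proper metric is unbounded. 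Properness of $\dist$ and finitely many $\mc G$-orbits of edges do not rescue this, because pairs of elements of one coset fall into infinitely many $\mc G$-orbits of pairs (unlike the pairs of $B$-vertices on a single bounded-length $2$-cell boundary used in Theorem~\ref{thm:BCP}). The paper avoids both problems by modifying the paths \emph{inside} $\bar\Gamma$ before transporting them: replace each maximal $\mc P$-component of $P_i$ by a single edge. The resulting path $P_i'$ has vertex set exactly the phase vertices of $P_i$, is still an embedded $\lambda$-quasigeodesic without backtracking (the replacement only shortens), so Lemma~\ref{lem:qi} makes $\varphi(P_i')$ embedded, Proposition~\ref{prop:BCP} applies, and every $\mc G$-vertex it produces is automatically a phase vertex of the original $P_2$ --- no post-hoc adjustment in $\dist$ is needed. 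Your argument becomes correct once you insert this reduction at the start.
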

\begin{proof}
Let $\varphi : \bar \Gamma \rightarrow \widehat \Gamma$ be the quasi-isometry given by Lemma~\ref{lem:qi}.
If $P_1$ and $P_2$ contain only phase vertices, the conclusion follows immediately after mapping $P_1$ and $P_2$ to $\widehat \Gamma$ and applying Proposition~\ref{prop:BCP}.

The Corollary holds for general $P_1$ and $P_2$ after the following observation. Let $P$ be a $\lambda$-quasigeodesic in $\bar \Gamma$, and let $P'$ be the path obtained from $P$ after replacing by a single edge each maximal  $\mc P$-component (for each $\mc P \in \mb P$). The new path $P'$ contains only phase vertices, its vertex set equals the set of phase vertices of $P$, and $P'$ is also a $\lambda$-quasigeodesic (since the process from $P$ to $P'$ is only shortening distances).
\end{proof}

\subsection{Finite Relative Generation} %{$(\overline{\text{Q-1}}) \Rightarrow$ Finite Relative Generation}

\begin{lem}[Bounded Intersection]\cite{HK08, MP09} \label{lem:quasiorthogonality1}
Let $A$ be a countable group with a proper left-invariant metric $\dist$.
Then for each $g \in C$, for each pair of subgroups $B$ and $C$ of $A$, and for each constant $K \geq 0$,
there exists a constant $M = M(B,C, g, \dist,  K) \geq 0$ so that
\[  B \cap N_{K}(gC)  \subset  N_M ( B \cap gCg^{-1}), \]
where $N_{K}(gC)$ and $N_M (B \cap gCg^{-1})$ denote the closed $K$-neighborhood and the closed $M$-neighborhood
of $gC$ and $B\cap gCg^{-1}$ in $(A,\dist)$.
\end{lem}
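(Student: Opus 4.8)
The plan is to reduce the statement to the case where $g = 1_A$ and then argue by contradiction using the properness of the metric. First I would observe that a closed $M$-neighborhood condition is equivalent, after replacing $M$ by $M+\dist(1_A,g)$ and using left-invariance, to a statement about $B \cap N_K(C)$: specifically, translating by $g^{-1}$ on the left shows $B \cap N_K(gC) \subset N_M(B \cap gCg^{-1})$ is implied by $(g^{-1}Bg) \cap N_K(C) \subset N_{M'}((g^{-1}Bg) \cap C)$ for a suitable $M'$. So it suffices to prove: for countable groups, for any two subgroups $B', C$ of $A$ and any $K \geq 0$, there is $M' \geq 0$ with $B' \cap N_K(C) \subset N_{M'}(B' \cap C)$.

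Next I would argue this last statement by contradiction. Suppose no such $M'$ exists. Then for each $i \in \N$ there is an element $b_i \in B' \cap N_K(C)$ with $\dist(b_i, B' \cap C) > i$. Since $b_i \in N_K(C)$, choose $c_i \in C$ with $\dist(b_i, c_i) \leq K$; then $\dist(1_A, b_i^{-1}c_i) = \dist(b_i, c_i) \leq K$ by left-invariance. Because $\dist$ is proper, the ball of radius $K$ around $1_A$ is finite, so the elements $b_i^{-1} c_i$ take only finitely many values; passing to a subsequence we may assume $b_i^{-1}c_i = b_j^{-1}c_j =: a$ for all $i,j$ in the subsequence, i.e. $c_i = b_i a$ and $c_j = b_j a$. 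Then $c_i c_j^{-1} = b_i b_j^{-1} \in B' \cap C$, so in particular $b_i b_j^{-1} \in B' \cap C$, and hence for fixed $j$ we get $\dist(b_i, B' \cap C) \leq \dist(b_i, b_i b_j^{-1}) = \dist(1_A, b_j^{-1}) = \dist(1_A,b_j)$, a constant independent of $i$. Wait --- I should double-check this bookkeeping: from $b_i b_j^{-1} \in B'\cap C$ we get $\dist(b_i, b_i b_j^{-1}) = \dist(1_A, b_j^{-1})$, which bounds $\dist(b_i, B'\cap C)$ uniformly in $i$, contradicting $\dist(b_i, B'\cap C) > i \to \infty$.

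The main obstacle I anticipate is keeping the various left/right translations and neighborhood bounds straight, since the neighborhood in the conclusion is of $B \cap gCg^{-1}$ (a subgroup of $B$) while the hypothesis neighborhood is of the coset $gC$; the properness of the metric is what makes everything finite, but one has to be careful that conjugating by $g$ changes the metric only by a bounded amount relative to the fixed elements involved, which is exactly why the constant $M$ is allowed to depend on $g$, $B$, $C$, $K$, and $\dist$. Once the reduction to $g=1_A$ and to subgroups is made cleanly, the contradiction argument via finiteness of metric balls is short. I would also note that this is essentially the statement that a subgroup of a group with proper left-invariant metric has "bounded packing" relative intersections, and cite~\cite{HK08, MP09} for the original appearance, as the paper already does in the lemma's attribution.
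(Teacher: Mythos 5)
Your proposal is correct and uses essentially the paper's own argument: argue by contradiction, use properness (finiteness of the $K$-ball about $1_A$) to pass to a subsequence where the offset element $b_i^{-1}c_i$ is constant, and observe that the differences $b_ib_j^{-1}$ then lie in the intersection subgroup, giving a uniform bound on $\dist(b_i, B'\cap C)$. The only deviation is your preliminary reduction to $g=1_A$ via conjugation (valid, with the $\dist(1,g)$-distortion absorbed into the constant, which is allowed since $M$ may depend on $g$); the paper skips this step by running the same pigeonhole argument directly with $q_n\in B$, $q_nh\in gC$ and noting $(q_nh)(q_mh)^{-1}\in B\cap gCg^{-1}$.
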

\begin{proof}
Suppose the statement is false for the constant $K$. Then there are sequences $\{ q_n \}_{n=1}^\infty$
and $\{ h_n \}_{n=1}^\infty$ such that $q_n \in B$, $q_n h_n \in gC$, $d(1,h_n) \leq K$, and
\[ d(q_n, B\cap gCg^{-1} ) \geq n .\]
Since balls are finite in the metric space $(A,d)$, without loss of generality
assume $\{ h_n \}_{n=1}^\infty$ is a constant sequence $\{ h\}_{n=1}^\infty$.
For any $m $ and $n$, observe that $q_nq_m^{-1}
= (q_nh) (q_m h)^{-1} \in B\cap gCg^{-1}$, and hence $q_mh$ and $q_nh$
are in the same right coset of $B\cap gCg^{-1}$, say $ (B\cap gCg^{-1}) f$. It
follows that
\[ d(q_n, B \cap gCg^{-1} ) \leq d(q_n, q_nh) + d(q_nh, B \cap gCg^{-1}) \leq K + d(1,f) \]
for any $n$, a contradiction.
\end{proof}

\begin{lem}[Parabolic Approximation]\label{lem:parabolic-aprox}
For each subgroup $\mc H < \mc G$ and each $\sigma \geq 0$, there is  $L=L(S, \mc H, \dist, \sigma)>0$ with the following property:
If $h \in \mc H$ is a product of the form $h =g p f$ where $\dist  (1, g) \leq \sigma$, $\dist  (1, f) \leq \sigma$, and $p \in \mc P$ for some $\mc P \in \mb P$. Then $h=a b$ where $a \in \mc H \cap gPg^{-1}$,  $ b \in \mc H$, and $\dist (1, b) \leq L$. (See Figure~\ref{fig:ConjugateComputation}.)
\end{lem}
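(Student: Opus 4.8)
The statement is about taking an element $h \in \mc H$ written as $h = gpf$ with $g,f$ in the bounded metric ball of radius $\sigma$ and $p$ parabolic, and factoring it ``cleanly'' through $\mc H \cap g\mc Pg^{-1}$. The natural candidate for the factor $a$ is a group element approximating the parabolic ``direction'' of $h$; we want $a \in \mc H \cap g\mc P g^{-1}$ and $b = a^{-1}h \in \mc H$ with $\dist(1,b)$ bounded in terms of the data $S, \mc H, \dist, \sigma$ only. The geometry is: $h$ lies within distance $\sigma$ of $g\mc P$ in $(\mc G, \dist)$ (since $h f^{-1} = gp \in g\mc P$ and $\dist(hf^{-1}, h) = \dist(1, f^{-1}) \le \sigma$), so we can apply the Bounded Intersection Lemma (Lemma~\ref{lem:quasiorthogonality1}) with $A = \mc G$, $B = \mc H$, $C = \mc P$, and $K = \sigma$.

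First I would observe that $h \in \mc H$ and $h \in N_\sigma(g\mc P)$, so by Lemma~\ref{lem:quasiorthogonality1} there is a constant $M = M(\mc H, \mc P, g, \dist, \sigma)$ with $h \in N_M(\mc H \cap g\mc P g^{-1})$. Hence there is $a \in \mc H \cap g\mc P g^{-1}$ with $\dist(a, h) \le M$. Set $b = a^{-1} h$. Then $b \in \mc H$ since both $a$ and $h$ are in $\mc H$, and $\dist(1,b) = \dist(a, h) \le M$ by left-invariance of $\dist$. The only remaining issue is that $M$ as produced by Lemma~\ref{lem:quasiorthogonality1} depends on $g$ and on $\mc P$, whereas the conclusion demands a single constant $L = L(S, \mc H, \dist, \sigma)$ uniform over all admissible triples $(g, p, f)$. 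This uniformity step is the main obstacle, and it is where the finiteness of $\mb P$ and of the ball of radius $\sigma$ (which is finite because $\dist$ is proper) must be used.

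To get the uniform constant: the collection $\mb P$ is finite, and since $\dist$ is proper the set $\{g \in \mc G : \dist(1,g) \le \sigma\}$ is finite. Therefore there are only finitely many pairs $(g, \mc P)$ with $\dist(1,g) \le \sigma$ and $\mc P \in \mb P$. For each such pair, Lemma~\ref{lem:quasiorthogonality1} supplies a constant $M(\mc H, \mc P, g, \dist, \sigma)$; let $L$ be the maximum of these finitely many constants. Then $L$ depends only on $S$ (which determines the relevant finite ball, together with $\dist$), on $\mc H$, on $\dist$, and on $\sigma$ — as required. For any $h = gpf$ of the stated form, the pair $(g, \mc P)$ is one of the finitely many, so the corresponding $M \le L$, and the factorization $h = ab$ constructed above satisfies $a \in \mc H \cap g\mc P g^{-1}$, $b \in \mc H$, and $\dist(1,b) \le M \le L$.

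I expect the proof to be short, with the substance entirely in invoking Lemma~\ref{lem:quasiorthogonality1} and then observing the finiteness that upgrades its output to a uniform bound; the figure reference (Figure~\ref{fig:ConjugateComputation}) simply depicts the triangle $1 \to g \to gp \to gpf = h$ together with the ``drop'' from $h$ to the coset $g\mc P$ and the resulting conjugate $a \in \mc H \cap g\mc P g^{-1}$. One small point to be careful about is left- versus right-coset bookkeeping in applying Lemma~\ref{lem:quasiorthogonality1} — the lemma is stated with $B \cap N_K(gC) \subset N_M(B \cap gCg^{-1})$, which matches our situation exactly with $B = \mc H$, $C = \mc P$, $K = \sigma$ — so no adjustment is needed there.
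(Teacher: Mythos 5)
Your proof is correct and matches the paper's argument: both apply the Bounded Intersection Lemma with $B=\mc H$, $C=\mc P$, $K=\sigma$ to get $h\in N_M(\mc H\cap g\mc Pg^{-1})$, and both obtain the uniform constant $L$ by maximizing over the finitely many pairs $(g,\mc P)$ with $\dist(1,g)\leq\sigma$ and $\mc P\in\mb P$, using properness of $\dist$ and finiteness of $\mb P$. The only cosmetic difference is the order of the steps (the paper defines $L$ first and then states the inclusion $\mc H\cap N_\sigma(g\mc P)\subset N_L(\mc H\cap g\mc Pg^{-1})$), which is immaterial.
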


\begin{figure}\centering
\includegraphics[width=.4\textwidth]{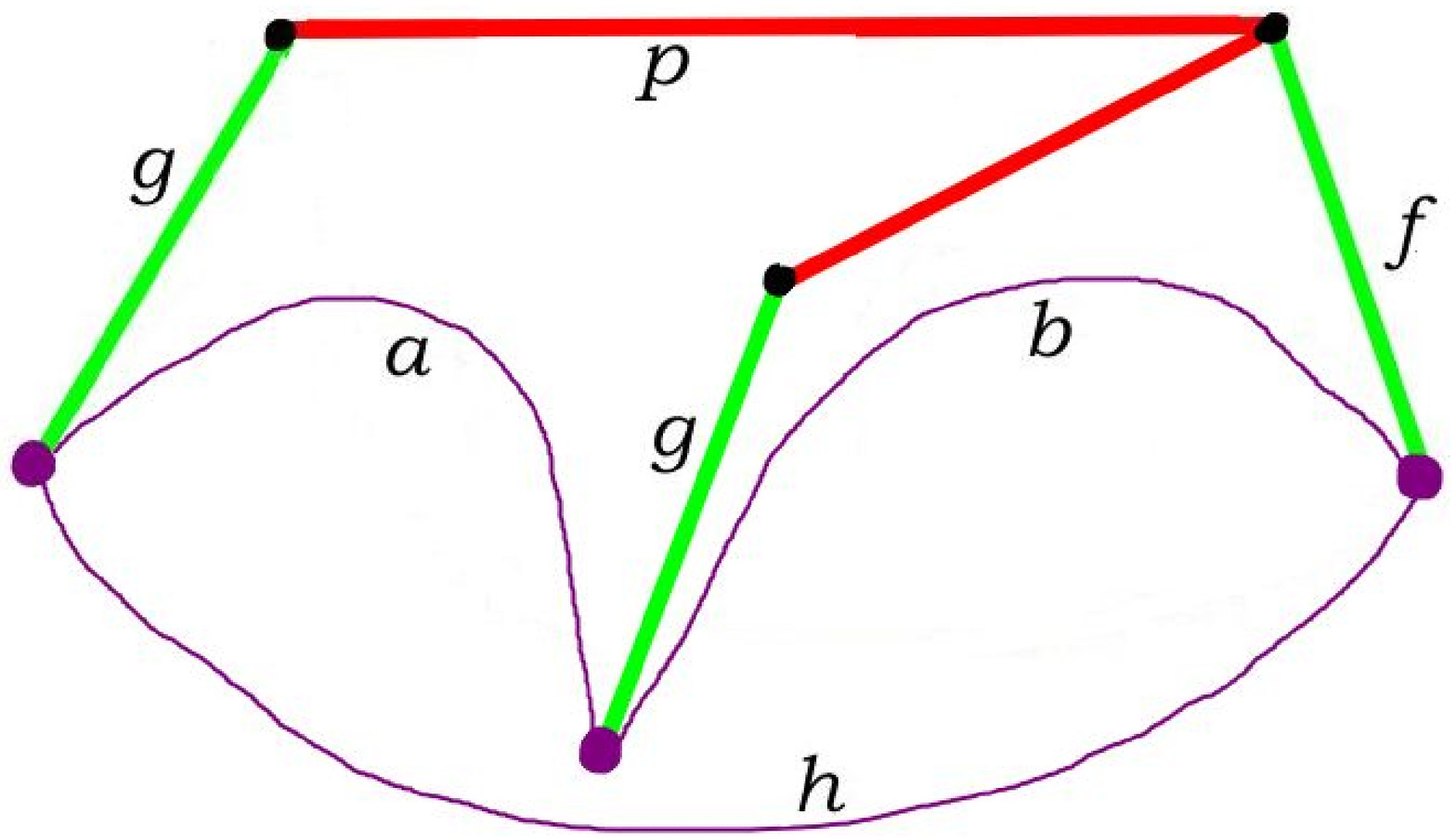}
\caption{$h=ab$ of Lemma~\ref{lem:parabolic-aprox} \label{fig:ConjugateComputation}}
\end{figure}

\begin{proof}
For each $g \in G$ and $\mc P\in \mb P$, let $M_{\mc P, g} =M(\mc H, \mc P,  g, \dist, \sigma)$ be the constant provided by Lemma~\ref{lem:quasiorthogonality1}. Since $\dist$ is proper and $\mb P$ is a finite collection of subgroups,
\[  L = \max \{   M_{\mc P, g}\ :\  g \in \mc G,\ \dist (1, g) \leq \sigma,\ \mc P\in \mb P \} \]
is a well-defined positive integer.

Suppose that $h \in \mc H$ is a product of the form $h =g p f$ where $\dist (1, g) \leq \sigma$, $\dist (1, f) \leq \sigma$, and $p \in \mc P$ for $\mc P \in \mb P$. Observe that
\[ h \in  \mc H \cap N_{\sigma}(gP) \subset N_{L}( \mc H \cap gPg^{-1} ), \]
where the neighborhoods are in $\mc G$ with respect to the metric $\dist$, and the second inclusion is a consequence of Lemma~\ref{lem:quasiorthogonality1}.
The conclusion of the lemma is immediate.
\end{proof}

\begin{prop}[$(\overline{\text{Q-1}}) \Rightarrow$ Finite Relative Generation] \label{prop:generation}
Suppose that $\mc H< \mc G$ satisfies Definition~\ref{def:O} of relative quasiconvexity for a proper left-invariant metric $\dist$ on $\mc G$, and  a constant $\sigma$.
Then $\mc H$ is finitely generated relative to the finite collection of subgroups
\[ \mb R = \{ \mc H \cap g \mc P g^{-1} \ :\ g\in \mc G,\ \dist (1, g)\leq \sigma,\ \mc P \in \mb P  \} .\]
\end{prop}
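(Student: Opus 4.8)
The plan is to take, for an arbitrary $h\in\mc H$, a geodesic in $\overline\Gamma(\mc G,S\cup\mb P)$ from $1$ to $h$, approximate each of its vertices by an element of $\mc H$ using Definition~\ref{def:O}, telescope the resulting elements of $\mc H$, and absorb each parabolic edge into one of the subgroups of $\mb R$ via Lemma~\ref{lem:parabolic-aprox}. The conclusion will then be that a suitable finite ball of $\mc H$ (in the metric $\dist$), together with $\bigcup_{\mc R\in\mb R}\mc R$, generates $\mc H$.

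First I would record the finiteness facts. Since $\dist$ is proper, the ball $\{g\in\mc G:\dist(1,g)\le\sigma\}$ is finite, so $\mb R$ is a finite collection of subgroups of $\mc H$, and each $\mc H\cap g\mc P g^{-1}$ is conjugate in $\mc G$ into $\mc P\in\mb P$, as required by the notion of relative generation. Put $D=\max_{s\in S}\dist(1,s)$, let $L=L(S,\mc H,\dist,\sigma)$ be the constant of Lemma~\ref{lem:parabolic-aprox}, and set
\[ T=\{\,w\in\mc H:\ \dist(1,w)\le\max(2\sigma+D,\ L)\,\}, \]
which is finite because $\dist$ is proper. I claim $T\cup\bigcup_{\mc R\in\mb R}\mc R$ generates $\mc H$, which is exactly the assertion that $\mc H$ is finitely generated relative to $\mb R$.

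To prove the claim, fix $h\in\mc H$ and choose a geodesic $1=x_0,x_1,\dots,x_n=h$ in $\overline\Gamma$, which exists since $S$ is a relative generating set. Each $s_i:=x_{i-1}^{-1}x_i$ is an edge label, so $s_i\in S$ or $s_i\in\mc P$ for some $\mc P\in\mb P$. Applying Definition~\ref{def:O} to the pair $1,h\in\mc H$, for each $i$ pick $h_i\in\mc H$ with $\dist(x_i,h_i)\le\sigma$, and take $h_0=1$ and $h_n=h$ (legitimate choices, of distance $0$). Then $h=(h_0^{-1}h_1)(h_1^{-1}h_2)\cdots(h_{n-1}^{-1}h_n)$, with every factor in $\mc H$, so it suffices to show each $h_{i-1}^{-1}h_i$ lies in $T\cup\bigcup_{\mc R\in\mb R}\mc R$. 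If $s_i\in S$, left-invariance gives
\[ \dist(1,h_{i-1}^{-1}h_i)\le\dist(h_{i-1},x_{i-1})+\dist(1,s_i)+\dist(x_i,h_i)\le 2\sigma+D, \]
so $h_{i-1}^{-1}h_i\in T$. If $s_i\in\mc P$, write $h_{i-1}^{-1}h_i=g\,s_i\,f$ with $g=h_{i-1}^{-1}x_{i-1}$ and $f=x_i^{-1}h_i$, so that $\dist(1,g)\le\sigma$ and $\dist(1,f)\le\sigma$; Lemma~\ref{lem:parabolic-aprox} then yields $h_{i-1}^{-1}h_i=ab$ with $a\in\mc H\cap g\mc P g^{-1}\in\mb R$, $b\in\mc H$, and $\dist(1,b)\le L$, hence $b\in T$. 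This proves the claim, and the proposition follows.

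The argument is mostly bookkeeping once Lemmas~\ref{lem:quasiorthogonality1} and~\ref{lem:parabolic-aprox} are in place; the only genuinely substantive step is the parabolic case, where a single (possibly very long) parabolic edge of the geodesic must be replaced, up to a bounded error in $\mc H$, by an element of one of the subgroups in $\mb R$ — this is precisely the content of Lemma~\ref{lem:parabolic-aprox}. The point to be careful about is that the error bound $L$ be uniform in the index $i$; this holds because $L$ is defined as a maximum over the finite ball of radius $\sigma$ and the finite collection $\mb P$, so it does not depend on which vertex of the geodesic is being processed.
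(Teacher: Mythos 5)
Your proposal is correct and follows essentially the same route as the paper: approximate the vertices of a geodesic in $\overline\Gamma$ from $1$ to $h$ by elements of $\mc H$, telescope, bound the $S$-labelled factors by $2\sigma+K$, and absorb the parabolic-labelled factors using Lemma~\ref{lem:parabolic-aprox}, with the same generating set $T$ defined via $\max\{2\sigma+K,L\}$. The only cosmetic difference is that you make the endpoint choices $h_0=1$, $h_n=h$ explicit, which the paper leaves implicit.
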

\begin{proof}
Let $ K = \max \{  \dist  (1, g)  : g \in S \}$, and
let $L=L(S, \mc H,  \dist, \sigma)$ be the constant provided by Lemma~\ref{lem:parabolic-aprox}.
Let \[ T= \big \{ h \in \mc H :  \dist (1, h) \leq  \max \{ 2\sigma + K, L\}  \big \} .\]
We will show that $T$ is a finite relative generating set for $\mc H$ with respect to the finite collection of subgroups $\mb R$.

Let $g \in \mc H$, and let $Q=Q_1 \cdots Q_n$ be a geodesic in $\overline \Gamma$ from the identity element to $g$. Here each $Q_i$ is an (oriented) edge of $Q$.
For each $i$, there is an (oriented) path $X_i$ from the startpoint of $Q_i$ to an element of $\mc H$ such that the $\dist$-distance between its endpoints is at most $\sigma$. Let $g_i$ be the element of $\mc G$ defined as the difference between the endpoint and startpoint of $X_i$, and $q_i \in S\cup \bigcup_{\mc P \in \mb P} \mc P$ the label of the (oriented) edge $Q_i$. Then let $h_i = g_i^{-1}q_i g_{i+1}$, and  notice that $g =h_1 \cdots h_n$ and $\dist (1, g_i) \leq \sigma$ for each $i$. See Figure~\ref{fig:CloseToH}.

If $Q_i$ is labelled by an element of $S$, that is $q_i \in S$, then
\[ \dist (1, h_i) \leq  \dist (1, g_i) + \dist (1, q_i) + \dist (1, g_{i+1}) \leq 2\sigma + K,\]
and hence $h_i \in T$.

Suppose that $Q_i$ is labelled by a parabolic element, that is $q_i \in \bigcup_{\mc P \in \mb P} \mc P$. Then the element $h_i$ can be approximated by a parabolic. Namely, by Lemma~\ref{lem:parabolic-aprox}, $h_i=a_i b_i$ where $a_i \in \mc H \cap gPg^{-1}$,  $ b_i \in \mc H$, and $\dist (1, b_i) \leq L$. Hence $h_i$ is a product of an element of a subgroup in $\mb R$, and an element  of $T$.

It follows that each element $g \in \mc H$ can be expressed as a product of elements of $T$ and elements of the subgroups in $\mb R$.
\end{proof}

\begin{figure}\centering
\includegraphics[width=.6\textwidth]{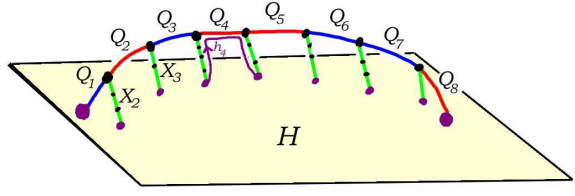}
\caption{A geodesic $Q$ with endpoints in quasiconvex subgroup.\label{fig:CloseToH}}
\end{figure}

\subsection{Auxiliary Definitions}

Since $\mc G$ acts on the connected graph $\mc K$ with finitely many orbits of edges, an standard argument shows that $\mc G$ is finitely generated relative to $\mb P$. This justifies the existence of finite relative generating sets in the following two definitions.

%We now describe a definition of relative quasiconvexity which is closely related to Definition~\ref{def:O}.
\begin{defn}[($\widehat{\text{Q-1}}$) Auxiliary definition in $\widehat \Gamma$]\label{def:hat}
Let $S$ be a finite relative generating set for $(\mc G, \mb P)$, and let $\dist$ be a proper left-invariant metric on $\mc G$.  A subgroup $\mc H$ of $\mc G$ is \emph{quasiconvex relative to  $\mb P$} if there exists a constant $\sigma \geq 0$ such that the following holds: Let $f$, $g$ be two elements of $\mc H$, and let $P$ be an arbitrary geodesic path from $f$ to $g$ in $\gps$. For any non-cone vertex $p$ of $P$, there exists a non-cone vertex $h$ in $\mc H$ such that $\dist (p,h) \leq \sigma$.
\end{defn}

\begin{defn}[$(\widehat{\text{Q-0}})$ Second auxiliary definition in $\widehat \Gamma$]\label{def:Revisited-hat}
Let $S$ be a finite relative generating set for $(\mc G, \mb P)$.  A subgroup $\mc H$ of $\mc G$ is \emph{quasiconvex relative to  $\mb P$} if  there is a non-empty connected $\mc H$-invariant subgraph $\mc L$ of $\widehat \Gamma$ that is  quasi-isometrically embedded and has finitely many $\mc H$-orbits of edges.
\end{defn}
%Notice that Definition~\ref{def:Revisited-hat} is a particular instance of Definition~\ref{def:ours}.

\subsection{The equivalence $ (\overline{ \text{Q-1}})  \Leftrightarrow  (\widehat{\text{Q-1}})$}

\begin{prop}[$ (\overline{ \text{Q-1}})  \Leftrightarrow  (\widehat{\text{Q-1}})$]  \label{prop:hatEosin}
Relative quasiconvexity of Definition~\ref{def:O} and relative quasiconvexity of Definition~\ref{def:hat} are equivalent notions.

In particular, Definition~\ref{def:hat} is independent of the generating set $S$ and the left-invariant metric $\dist$ on $G$.
\end{prop}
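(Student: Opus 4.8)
The plan is to compare geodesics in $\widehat\Gamma = \widehat\Gamma(\mc G, \mb P, S)$ directly with geodesics in $\overline\Gamma = \overline\Gamma(\mc G, S\cup\mb P)$, using the quasi-isometry $\varphi\colon \overline\Gamma \to \widehat\Gamma$ from Lemma~\ref{lem:qi} together with the strong fellow travel properties already established. For the direction $(\overline{\text{Q-1}}) \Rightarrow (\widehat{\text{Q-1}})$: suppose $\mc H$ satisfies Definition~\ref{def:O} with constant $\sigma$, and let $P$ be a geodesic in $\widehat\Gamma$ between $f,g \in \mc H$. Pull back via $\varphi$: a geodesic in $\widehat\Gamma$ corresponds (after the standard clean-up, replacing two-edge detours through cone-vertices by single parabolic edges and removing backtracking as in the proof of Corollary~\ref{prop:BCP-2}) to a path $\widehat P$ in $\overline\Gamma$ between $f$ and $g$ with only phase vertices and no backtracking, which is a $\lambda$-quasigeodesic in $\overline\Gamma$ for a uniform $\lambda$ — here I would invoke the fact that $\varphi$ is a $(2,0)$-quasi-isometry so that $\varphi$ of an actual $\overline\Gamma$-geodesic is a uniform quasigeodesic in $\widehat\Gamma$, and compare it to $P$ using Proposition~\ref{prop:BCP}. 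The upshot: non-cone vertices of $P$ are uniformly $\dist$-close to phase vertices of $\widehat P$, which by Definition~\ref{def:O} applied to an honest $\overline\Gamma$-geodesic between $f$ and $g$ (again via Corollary~\ref{prop:BCP-2} to pass between the two quasigeodesics in $\overline\Gamma$) are uniformly $\dist$-close to elements of $\mc H$. Composing the bounds gives a single $\sigma'$ that works for Definition~\ref{def:hat}.

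For the reverse direction $(\widehat{\text{Q-1}}) \Rightarrow (\overline{\text{Q-1}})$: suppose $\mc H$ satisfies Definition~\ref{def:hat} with constant $\sigma$, and let $Q$ be a geodesic in $\overline\Gamma$ between $f,g \in \mc H$, with a phase vertex $p$ on it. Map $Q$ forward by $\varphi$: after shortening each maximal $\mc P$-component to a single edge (this does not move phase vertices and only shortens), $\varphi(Q)$ is an embedded $2$-quasigeodesic in $\widehat\Gamma$ through the image of $p$. Compare $\varphi(Q)$ with a genuine $\widehat\Gamma$-geodesic $P$ between $f$ and $g$ using Proposition~\ref{prop:BCP} (applied with $B = \mc G$): the non-cone vertex $\varphi(p) = p$ is within bounded $\dist$-distance of a non-cone vertex of $P$, which by Definition~\ref{def:hat} is within $\sigma$ of an element of $\mc H$. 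Again the constants combine to a uniform bound.

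The "In particular" clause is then a formal consequence: $(\overline{\text{Q-1}})$ as stated in Definition~\ref{def:O} is a priori attached to a choice of $S$ and $\dist$, but the class of subgroups it picks out coincides with that of Definition~\ref{def:hat}, which — having been shown equivalent to Definition~\ref{def:O} for \emph{every} admissible $(S,\dist)$ — forces the class to be the same no matter which $S$ and $\dist$ are used. One should remark that Proposition~\ref{prop:generation} guarantees the relevant finite relative generating sets and the properness of some left-invariant metric exist, so the hypotheses of both definitions are simultaneously meaningful. The main obstacle I anticipate is bookkeeping the passage back and forth through $\varphi$: making sure that "phase vertex / non-cone vertex" are matched up correctly, that the backtracking and maximal-component reductions genuinely preserve the quantities being measured, and that the quasigeodesic constants stay uniform through the two applications of the fellow-travel results (Proposition~\ref{prop:BCP} and Corollary~\ref{prop:BCP-2}) — i.e., keeping track that every constant introduced depends only on $\lambda$, $\delta$, $S$, $\dist$, and $\sigma$, and not on the particular pair $f,g$.
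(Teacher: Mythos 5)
Your proposal is correct and follows essentially the same route as the paper: it transfers geodesics between $\overline\Gamma$ and $\widehat\Gamma$ via the quasi-isometry of Lemma~\ref{lem:qi} and applies the strong fellow-travel results (Proposition~\ref{prop:BCP} and Corollary~\ref{prop:BCP-2}) to compose constants, exactly as in the paper's proof; the only cosmetic difference is that in the direction $(\overline{\text{Q-1}})\Rightarrow(\widehat{\text{Q-1}})$ the paper pulls the $\widehat\Gamma$-geodesic back to $\overline\Gamma$ and uses Corollary~\ref{prop:BCP-2}, whereas your push-forward variant of the $\overline\Gamma$-geodesic via Proposition~\ref{prop:BCP} works just as well (and your simultaneous invocation of both fellow-travel results there is redundant). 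One small caveat: the ``in particular'' clause does not follow formally from the equivalence alone, as your last paragraph suggests, since Definition~\ref{def:O} is also a priori attached to a choice of $(S,\dist)$; it relies on the independence of Definition~\ref{def:O} from those choices established in~\cite{HK08}.
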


\begin{proof}%[Proof of Proposition~\ref{prop:hatEosin}]
Let $\varphi: \bar \Gamma \rightarrow \widehat \Gamma$ be the natural quasi-isometry induced by the identity map on $G$, see Lemma~\ref{lem:qi}.

Suppose that $\mc H < \mc G$ satisfies Definition~\ref{def:hat} for a constant $\sigma$. Let $\bar P$ be a geodesic in $\bar \Gamma$ between elements of $\mc H$. Then $\varphi$ maps $\bar P$ to a $2$-quasigeodesic $\widehat P$. Let $\widehat Q$ be a geodesic in $\widehat \Gamma$ between the endpoints of $\widehat P$.
By Proposition~\ref{prop:BCP}, there is a constant $M= M(2)$  such that for each non-cone vertex $p$ of $\widehat P$, there is a non-cone vertex $q \in \widehat Q$ such that $\dist (p, q) \leq M$. By Definition~\ref{def:hat}, for each non-cone vertex $q$ of $\widehat Q$, there is $h \in \mc H$ such that $\dist (q,h) \leq \sigma$. Since the vertices of $\bar P$ map to the non-cone vertices of $\widehat P$, it follows that for each vertex $p$ of $\bar P$, there is a vertex $h$ of $\mc H$ such that $\dist (p, h) \leq \sigma+M$. Therefore, $\mc H$ satisfies Definition~\ref{def:O}.

Conversely, suppose that $\mc H < \mc G$ satisfies Definition~\ref{def:O} for a constant $\sigma$, and let $\hat P$ be a geodesic in $\widehat \Gamma$ between elements of $\mc H$.  Let $\bar Q$ be a geodesic in $\bar \Gamma$ between the endpoints of $\hat P$. Observe that there is an embedded $2$-quasigeodesic $\bar P$ in $\bar \Gamma$ that maps onto $\hat P$ by $\varphi$, has only phase vertices, and does not backtrack. Then Proposition~\ref{prop:BCP-2} implies that there is a constant $M=M(2)$ such that the vertices of $\bar Q$ and $\bar P$ (and hence $\hat P$) are $M$-closed with respect to the metric $\dist$. Since the vertices of $\bar Q$ are $\sigma$-close to the elements of $\mc H$ with respect to $\dist$, we conclude that $\mc H$
satisfies Definition~\ref{def:hat} for the constant $\sigma+M$.
\end{proof}

\subsection{The equivalence $(\widehat{\text{Q-1}})    \Leftrightarrow   (\widehat{\text{Q-0}})$}

\begin{prop} [$ (\widehat{\text{Q-1}})  \Rightarrow  (\widehat{\text{Q-0}})$]
Relative quasiconvexity of  Definition~\ref{def:hat} implies relative quasiconvexity of Definition~\ref{def:Revisited-hat}.
\end{prop}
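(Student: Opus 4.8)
The plan is to show that if $\mc H$ satisfies the metric quasiconvexity of Definition~\ref{def:hat} with constant $\sigma$, then one can produce a concrete $\mc H$-invariant subgraph $\mc L$ of $\widehat \Gamma$ that is connected, quasi-isometrically embedded, and has finitely many $\mc H$-orbits of edges. The natural candidate is the union of all geodesics in $\widehat\Gamma$ between pairs of elements of $\mc H$, together with the cone-edges needed to keep things connected; more precisely, I would take $\mc L$ to be the subgraph spanned by the set $Y$ of all vertices of $\widehat\Gamma$ lying within $\sigma$ (in the metric $\dist$, i.e.\ within bounded $\Gamma$-distance) of $\mc H$, after adjoining for each such vertex a geodesic back to $\mc H$, plus all cone-vertices adjacent to vertices of $Y$. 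The key point from Definition~\ref{def:hat} is that every geodesic of $\widehat\Gamma$ between two points of $\mc H$ has all its non-cone vertices in $Y$, so $\mc L$ contains all such geodesics and is therefore connected, and $\mc H$ acts on $\mc L$ since $Y$ is $\mc H$-invariant.

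The steps, in order, would be: (1) Define $\mc L$ as above from the constant $\sigma$ and a choice, for each orbit representative of vertices in the bounded neighborhood of $\mc H$, of a geodesic to $\mc H$; take the $\mc H$-orbit of these finitely many geodesics, together with the spanned cone-edges. (2) Verify $\mc L$ is connected: any two elements of $\mc H$ are joined by an $\widehat\Gamma$-geodesic whose non-cone vertices are $\sigma$-close to $\mc H$ and hence lie in $Y$, and whose cone-vertices are adjacent to such vertices, so the geodesic lies in $\mc L$; connectedness of the whole follows since $\mc H$ is connected as a set. (3) Verify finitely many $\mc H$-orbits of edges: the vertices of $\widehat\Gamma$ within bounded $\Gamma$-distance of $1_{\mc G}$ form a finite set because $S$ is finite, so $Y$ is a bounded neighborhood of $\mc H$ with finitely many $\mc H$-orbits of vertices, hence finitely many $\mc H$-orbits of edges among them, and only finitely many additional geodesic-orbits and cone-edge-orbits were attached. (4) Verify $\mc L \hookrightarrow \widehat\Gamma$ is a quasi-isometric embedding: distances in $\mc L$ are at least distances in $\widehat\Gamma$ trivially; for the upper bound, given $u,v\in\mc L$, push them to nearby points of $\mc H$ (cost $\le\sigma$ each), take an $\widehat\Gamma$-geodesic between those points — which lies in $\mc L$ by construction — and conclude $\dist_{\mc L}(u,v) \le \dist_{\widehat\Gamma}(u,v) + 2\sigma + (\text{bounded detour constant})$.

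\textbf{Main obstacle.} The delicate step is (4), the quasi-isometric embedding: one must be careful that the "bounded detour" from an arbitrary vertex of $\mc L$ back into $\mc H$ is itself a path lying inside $\mc L$ (not merely a short path in $\widehat\Gamma$), which is why $\mc L$ is defined to include, equivariantly, the chosen geodesics from neighborhood vertices to $\mc H$ rather than just the geodesics between pairs of $\mc H$-points. A secondary subtlety is the role of cone-vertices: a geodesic of $\widehat\Gamma$ may pass through cone-vertices, and Definition~\ref{def:hat} only controls the \emph{non-cone} vertices, so one must observe that each cone-vertex on such a geodesic is adjacent (via a cone-edge) to a non-cone vertex that is $\sigma$-close to $\mc H$, and include those cone-edges in $\mc L$; this keeps the orbit count finite because each non-cone vertex of $Y$ contributes boundedly many cone-edges up to its stabilizer, and edge-stabilizers in $\widehat\Gamma$ are finite.
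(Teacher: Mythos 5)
Your construction is correct, but it takes a genuinely different route from the paper's. The paper first passes through Proposition~\ref{prop:hatEosin} and Proposition~\ref{prop:generation} to obtain a finite set $T\subset\mc H$ generating $\mc H$ relative to the finite collection $\mb R$ of parabolic subgroups $\mc H\cap g\mc Pg^{-1}$ with $\dist(1,g)\le\tau$, builds $\mc L=\bigcup_{h\in\mc H}h\mc J$ from a finite connected $\mc J$ containing $\{1_{\mc G}\}\cup T\cup C$ (Lemma~\ref{lem:initial-complex}, with $C$ the cone-vertices of $\mb R$), and then proves the quasi-isometric embedding by cutting a $\widehat\Gamma$-geodesic into $S$-edges and two-edge cone shortcuts and controlling each step with the Parabolic Approximation Lemma~\ref{lem:parabolic-aprox}: across a shortcut the nearby elements $h_{i-1},h_i\in\mc H$ may be far apart in $\dist$, but $h_{i-1}^{-1}h_i=ab$ with $a$ in a subgroup of $\mb R$ whose cone-vertex lies in $\mc L$ and with $b$ short, so $\dist_{\mc L}(h_{i-1},h_i)$ is uniformly bounded. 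You instead fatten $\mc H$: you take the induced subgraph on $Y=\{g\in\mc G:\dist(g,\mc H)\le\sigma\}$ together with all adjacent cone-vertices and cone-edges, plus equivariantly chosen connecting paths back to $\mc H$. Definition~\ref{def:hat} then guarantees that every $\widehat\Gamma$-geodesic between points of $\mc H$ lies entirely in $\mc L$ (its non-cone vertices lie in $Y$, its cone-vertices are adjacent to them, and no two cone-vertices are adjacent), so $\dist_{\mc L}$ and $\dist_{\widehat\Gamma}$ agree on $\mc H$ and the quasi-isometric embedding follows with only an additive constant once the bounded detours you built into $\mc L$ are used. This bypasses Proposition~\ref{prop:generation} and Lemma~\ref{lem:parabolic-aprox} entirely, at the price of a larger (but still edge-cocompact and visibly $\mc H$-invariant) subgraph; the paper's version produces a subgraph tied to a relative generating set, which fits its overall chain of equivalences with Osin's framework. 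Two small corrections to your justifications: the finiteness of $\mc H$-orbits of vertices of $Y$, and hence of edges, comes from properness of $\dist$ (balls in $(\mc G,\dist)$ are finite), not from finiteness of $S$ or ``bounded $\Gamma$-distance'' --- $S$ is only a relative generating set, so the $\Gamma(\mc G,S)$-metric is neither defined on all of $\mc G$ in a useful way nor comparable to $\dist$; and the appeal to finite edge stabilizers when counting cone-edges is unnecessary, since each element of $Y$ meets exactly one cone-edge for each $\mc P\in\mb P$, giving at most $|Y/\mc H|\cdot|\mb P|$ orbits directly.
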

\begin{proof}
Let $\dist$ be a proper left-invariant metric  on $G$, and let $S$ be a finite relative generating set of $(\mc G, \mb P)$.
Suppose that $\mc H < \mc G$ satisfies  Definition~\ref{def:hat} for some $\sigma>0$.

\emph{Choosing the  subgraph $\mc L$.}
By Proposition~\ref{prop:hatEosin}, $\mc H$ also satisfies relative quasiconvexity of Definition~\ref{def:O}. Thus by Proposition~\ref{prop:generation}, $\mc H$ is generated by a finite subset $T\subset \mc H$ relative to the finite collection of subgroups
\[ \mb R = \{ \mc H \cap g \mc P g^{-1} \ :\ g\in \mc G,\ \dist (1, g) \leq \tau ,\ \mc P \in \mb P  \} ,\]
where $\tau$ is positive constant. Without loss of generality assume that $\tau \geq \sigma$.

Let $C$ be the cone-vertices associated to the subgroups in $\mb R$. Let $\mc J$ be a finite connected subgraph of $\widehat \Gamma$ containing $\{ 1_{\mc G}\} \cup T \cup C$. By  Lemma~\ref{lem:initial-complex} the graph $\mc L = \bigcup_{h\in \mc H} h \mc J$ is connected.

\emph{The inclusion $\mc L \subset \widehat \Gamma$ is a quasi-isometric embedding with respect to the combinatorial path metrics.} Let $\dist_{\widehat \Gamma }$ and $\dist_{\mc L}$ denote the combinatorial path metrics of $\widehat \Gamma$ and $\mc L$ respectively.  Specifically we will show that there is a  constant  $N>0$ such that for any $h \in \mc H$ we have $ \dist_{\mc L} (1, h) \leq N \dist_{\widehat \Gamma}(1, h) $.

We define our constant $N>0$ with the help of four auxiliary constants.  Let
$ J = \max \{ \dist_{\mc L} (1, c) \ :\  c \in C \}$,
and observe that $J<\infty$  since $|C| < \infty$, $\mc L$ is connected, and $1_{\mc G} \in \mc L$.
 Let
$ K = \max \{  \dist  (1, g)  \ :\  g \in S \}$ ,
and notice that is a finite number since $|S|<\infty$.
 Let  $ L=L(S, \mc H, \dist, \sigma)>0 $
be the constant provided by Lemma~\ref{lem:parabolic-aprox}.
 Let
\[ M = \max \left \{\dist_{\mc L } (1, h)  \ :\  h \in \mc H,\  \dist (1, h) \leq  2\sigma + K +  L  \right \},\]
and notice that is finite since $\dist$ is a proper metric.
Let $ N = \max \{ 2J+L, M  \}$.

Let $h \in \mc H$ and let $P$ be a geodesic in $\widehat \Gamma (\mc G, \mb P, S)$ from $1_{\mc G}$ to $h$. Express $P = P_1P_2\cdots P_\ell$ as a concatenation of paths so that each $P_i$ is either a single edge with endpoints in $\mc G$, or is a \emph{shortcut} consisting of two cone-edges meeting at a cone-point. Note that each shortcut has also endpoints in $\mc G$). For each $i$ let $p_i$ denote the endpoint of $P_i$.

Since $\mc H$ satisfies Definition~\ref{def:hat} for the constant $\sigma$, for each $0 < i < \ell$,  there is an element $h_i \in \mc H$ with $\dist (p_i, h_i) \leq \sigma$. Let $h_0=1_{\mc G}$ denote the startpoint of $P$ and let $h_\ell=h$ denote the endpoint of $P$.

If $P_i$ is a single edge, then $\dist (h_{i-1}, h_i) \leq 2 \sigma + K$, and therefore \[\dist_{\mc L } (h_{i-1}, h_i) \leq M \leq N.\]

If $P_i$ is a shortcut, then $h_{i-1}^{-1}h_i = g p f$ where $\dist  (1, g) \leq \sigma$, $\dist  (1, f) \leq \sigma$, and $p \in \mc P$ for some $\mc P \in \mb P$.
By Lemma~\ref{lem:parabolic-aprox}, $h_{i-1}^{-1}h_i=a b$ where $a \in \mc H \cap gPg^{-1}$,  $ b \in \mc H$, and $\dist (1, b) \leq L$.
Since $\dist (1, g) \leq \sigma \leq \tau$, the cone-vertex $c_{gP}$ is contained in $\mc L$, and therefore
$ \dist_{\mc L}(1, a) \leq 2\dist_{\mc L} (1, c_{g P}) \leq  2J. $
Hence
\[ \dist_{\mc L}(h_{i-1},h_i) \leq \dist_{\mc L}(1, a) + \dist_{\mc L}(1, b) \leq 2J+L \leq N.\]

To conclude, observe that
\[  \dist_{\mc L}(1, h) \leq \sum_{i=1}^\ell \dist_{\mc L} (h_{i-1}, h_i) \leq  N \ell \leq N \dist_{\widehat \Gamma} (1, h). \qedhere \]
\end{proof}

\begin{prop} [$ (\widehat{\text{Q-0}})  \Rightarrow  (\widehat{\text{Q-1}})$]\label{prop:q2hat-implies-q1hat}
Relative quasiconvexity of Definition~\ref{def:Revisited-hat} implies relative quasiconvexity of Definition~\ref{def:hat}.
\end{prop}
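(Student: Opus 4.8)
The plan is to show that the subgraph $\mc L \subset \widehat \Gamma$ witnessing Definition~\ref{def:Revisited-hat} forces the ``$\mc H$ is coarsely dense along geodesics'' condition of Definition~\ref{def:hat}. First I would fix the $\mc H$-invariant, connected, quasi-isometrically embedded subgraph $\mc L$ with finitely many $\mc H$-orbits of edges, and let $\mu \geq 1$, $c \geq 0$ be quasi-isometry constants for the inclusion $\mc L \hookrightarrow \widehat \Gamma$ with respect to the combinatorial path metrics. Since $1_{\mc G}$ can be assumed to lie in $\mc L$ (translate if necessary, or note $\mc L$ is nonempty and $\mc H$-invariant so some $\mc G$-coset representative works after adjusting), and since $\mc L$ has finitely many $\mc H$-orbits of edges, $\mc L$ is $\mc H$-cocompact; hence there is a constant $D_0$ so that every vertex of $\mc L$ lies within combinatorial distance $D_0$ in $\widehat \Gamma$ of some element of the orbit $\mc H \cdot 1_{\mc G} = \mc H$. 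The point is that non-cone vertices of $\mc L$ that happen to be group elements are already $\mc H$-translates of the finitely many orbit representatives of vertices of the compact fundamental domain.

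Next I would take $f, g \in \mc H$ and a geodesic $P$ in $\widehat \Gamma$ from $f$ to $g$, and a non-cone vertex $p$ of $P$. Because $\mc L$ is connected and $\mc H$-invariant with $f, g \in \mc L$, there is a path $P'$ in $\mc L$ from $f$ to $g$, and by the quasi-isometric embedding $\dist_{\mc L}(f,g) \leq \mu\, \dist_{\widehat \Gamma}(f,g) + c = \mu |P| + c$. So $\mc L$ contains a path of bounded $\mc L$-length, hence a path of $\widehat \Gamma$-length at most $\mu|P| + c$, between $f$ and $g$. Now apply the Strong Fellow Travel property: the geodesic $P$ and the path in $\mc L$ (which is a $\lambda$-quasigeodesic for $\lambda$ depending only on $\mu, c$ after reparametrizing — actually I would instead take a $\widehat \Gamma$-geodesic inside... no, $\mc L$-geodesics need not be $\widehat \Gamma$-geodesics) are both $\lambda$-quasigeodesics in $\widehat \Gamma$ between the same endpoints for a uniform $\lambda$. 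By Proposition~\ref{prop:BCP} there is $M = M(\lambda)$ so that the non-cone vertex $p$ of $P$ is $\dist$-within $M$ of some non-cone vertex $q$ of the $\mc L$-path; but that $\mc L$-path can be chosen to pass only through vertices of $\mc L$, and every non-cone vertex of $\mc L$ is within combinatorial (hence, via properness, $\dist$-) distance $D_0$ of $\mc H$. Composing, $p$ is $\dist$-within $M + D_1$ of $\mc H$ for a uniform constant $D_1$ controlling the passage from the combinatorial $D_0$-bound to the proper metric $\dist$ (finitely many orbits, $\mc G$-equivariance of $\dist$ restricted to $\mc H$). This gives Definition~\ref{def:hat} with $\sigma = M + D_1$.

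The one genuine subtlety — and the step I expect to be the main obstacle — is ensuring that the path in $\mc L$ between $f$ and $g$ is a \emph{uniform} quasigeodesic in $\widehat \Gamma$ so that Proposition~\ref{prop:BCP} applies with a $\lambda$ independent of $f, g$. A geodesic of $\mc L$ joining $f$ to $g$ has $\mc L$-length $\leq \mu\,\dist_{\widehat \Gamma}(f,g) + c$, and since the inclusion is $1$-Lipschitz into $\widehat \Gamma$, its $\widehat \Gamma$-length is also $\leq \mu\,\dist_{\widehat \Gamma}(f,g)+c$; for it to be a $(\mu, c)$-quasigeodesic in $\widehat \Gamma$ one needs the same bound on every subpath, which holds because a subpath of an $\mc L$-geodesic is an $\mc L$-geodesic and the quasi-isometric embedding inequality applies to its endpoints too. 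One must also arrange that this quasigeodesic is \emph{embedded} and shares no interior vertices with the geodesic $P$ — if not, one replaces repeated or shared portions by shortcuts / passes to a subpath, which only improves the quasigeodesic constants, exactly as done in the proof of Proposition~\ref{prop:BCP-2}. Finally I would convert the combinatorial $D_0$-closeness of non-cone vertices of $\mc L$ to $\mc H$ into $\dist$-closeness: a non-cone vertex of $\mc L$ at $\widehat \Gamma$-distance $\leq D_0$ from $\mc H$ differs from an element of $\mc H$ by a product of $\leq D_0$ generators in $S \cup \bigcup \mb P$, and since $\mc L$ has finitely many $\mc H$-orbits of vertices and $\dist$ is proper and $\mc G$-left-invariant, these finitely many ``error'' elements lie in a bounded $\dist$-ball. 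This completes the passage to $\sigma$.
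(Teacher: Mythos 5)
Your proposal is correct and follows essentially the same route as the paper's proof: assume $1_{\mc G}\in\mc L$, take an $\mc L$-geodesic between the endpoints (an embedded uniform quasigeodesic in $\widehat\Gamma$ thanks to the quasi-isometric embedding), apply the Strong Fellow Travel property of Proposition~\ref{prop:BCP}, and use $\mc H$-cocompactness of $\mc L$ to place every non-cone vertex of $\mc L$ in a bounded $\dist$-neighborhood of $\mc H$. One small remark: your detour through the combinatorial $\widehat\Gamma$-distance bound $D_0$ is unnecessary and by itself would not suffice (a single cone-edge can join elements that are arbitrarily $\dist$-far apart); the paper's constant $\kappa$ comes directly from the fact you also invoke, namely finitely many $\mc H$-orbits of non-cone vertices of $\mc L$ together with left-invariance of $\dist$.
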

\begin{proof}
Let $\dist$ be a proper left-invariant metric  on $G$, and let $S$ be a finite relative generating set of $(\mc G, \mb P)$.
Suppose that $\mc H < \mc G$ satisfies Definition~\ref{def:Revisited-hat}.

Let $\mc L$ be connected quasi-isometrically embedded subgraph $\mc L$ of $\widehat \Gamma = \gps$ that is $\mc H$-equivariant and has  finitely many $\mc H$-orbits of edges.  The following statements about $\mc L$ hold.
\begin{enumerate}
\item Since $\mc L$ is connected, it has at least one vertex that is not a {cone-vertex}. Without loss of generality, assume that the identity $1_{\mc G} \in \mc G$ is a vertex of $\mc L$. In particular, we assume that all elements of $\mc H$ are vertices of $\mc L$.
\item \label{stat-2} Since $\mc H$ acts cocompactly on $\mc L$, there is $\kappa>0$ such that each vertex of $\mc L$ is either a cone-vertex, or an element of $\mc G$ is at distance at most $\kappa$ from $\mc H$ with respect to $\dist$.
\item \label{stat-3} Since $\mc L$ is quasi-isometrically embedded subgraph of $\widehat \Gamma$, there is $\lambda \geq 1$ such that any geodesic in $\mc L$ is a $\lambda$-quasi-geodesic in $\widehat \Gamma$.
\end{enumerate}

Let $M = M (\lambda) > 0$ be the constant provided by Proposition~\ref{prop:BCP}. Let $Q$ be a geodesic in $\widehat \Gamma$ with endpoints in $\mc H$.
Let $P$ be a geodesic in $\mc L$ between the endpoints of $Q$. By Statement~\eqref{stat-3},  $P$ is a $\lambda$-quasigeodesic. Therefore,  Proposition~\ref{prop:BCP} implies that for each non-cone vertex $q$ of $Q$, there is a non-cone vertex $p$ of $P$ such that $\dist (p, q) \leq M$.  Combining this with  Statement (\ref{stat-2}) shows that for each non-cone vertex $q$ of $Q$ there is a  non-cone vertex $h \in \mc H$ such that $\dist (p, h) \leq M+\kappa$. Therefore, $\mc H$ satisfies Definition~\ref{def:hat}.
\end{proof}

\subsection{The equivalence $ (\widehat{\text{Q-0}})  \Leftrightarrow  ({\text{Q-0}})   $ }

\begin{prop}[$ (\widehat{\text{Q-0}})  \Leftrightarrow  ({\text{Q-0}})   $] \label{prop:oursEhat}
Relative quasiconvexity of Definition~\ref{def:Revisited-hat} is equivalent to relative quasiconvexity of Definition~\ref{def:ours}.
\end{prop}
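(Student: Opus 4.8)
The plan is to prove the equivalence $(\widehat{\text{Q-0}}) \Leftrightarrow (\text{Q-0})$ by exploiting that $\widehat \Gamma = \gps$ is itself a $(\mc G, \mb P)$-graph, which was established in Proposition~\ref{prop:Dahmani}. Once this is granted, both directions essentially follow from the independence of $(\mc G, \mb P)$-graph proved in Theorem~\ref{thm:movings}, together with the observation that $(\widehat{\text{Q-0}})$ is literally $(\text{Q-0})$ specialized to the particular $(\mc G,\mb P)$-graph $\widehat \Gamma$.

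For the direction $(\widehat{\text{Q-0}}) \Rightarrow (\text{Q-0})$: suppose $\mc H$ satisfies Definition~\ref{def:Revisited-hat}, so there is a non-empty connected quasi-isometrically embedded $\mc H$-invariant subgraph $\mc L$ of $\widehat \Gamma$ with finitely many $\mc H$-orbits of edges. Since $\widehat \Gamma$ is a $(\mc G,\mb P)$-graph by Proposition~\ref{prop:Dahmani}, this says precisely that $\mc H$ satisfies relative quasiconvexity of Definition~\ref{def:ours} with respect to the specific $(\mc G,\mb P)$-graph $\widehat \Gamma$. By Theorem~\ref{thm:movings}, relative quasiconvexity of Definition~\ref{def:ours} holds for one $(\mc G,\mb P)$-graph if and only if it holds for every $(\mc G,\mb P)$-graph, so $\mc H$ satisfies $(\text{Q-0})$.

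For the converse $(\text{Q-0}) \Rightarrow (\widehat{\text{Q-0}})$: suppose $\mc H$ satisfies Definition~\ref{def:ours}. By Theorem~\ref{thm:movings} it then satisfies Definition~\ref{def:ours} with respect to every $(\mc G,\mb P)$-graph, in particular with respect to $\widehat \Gamma$ (which is a $(\mc G,\mb P)$-graph by Proposition~\ref{prop:Dahmani}). But that is exactly the statement of Definition~\ref{def:Revisited-hat}. Hence $\mc H$ satisfies $(\widehat{\text{Q-0}})$.

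I do not expect any serious obstacle: the only subtle point is making sure Definition~\ref{def:Revisited-hat} really is the verbatim restriction of Definition~\ref{def:ours} to the graph $\widehat \Gamma$ --- both ask for a non-empty connected, quasi-isometrically embedded, $\mc H$-invariant subgraph with finitely many $\mc H$-orbits of edges --- and that Proposition~\ref{prop:Dahmani} indeed certifies $\widehat \Gamma$ as a legitimate $(\mc G,\mb P)$-graph so that Theorem~\ref{thm:movings} applies to it. Given these, the proposition is an immediate corollary and requires essentially no new computation.
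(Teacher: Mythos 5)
Your proposal is correct and follows exactly the paper's own argument: the paper likewise invokes Proposition~\ref{prop:Dahmani} to certify $\gps$ as a $(\mc G,\mb P)$-graph and Theorem~\ref{thm:movings} to transfer Definition~\ref{def:ours} between $(\mc G,\mb P)$-graphs, with Definition~\ref{def:Revisited-hat} being the verbatim specialization to $\widehat\Gamma$. No gaps; nothing further is needed.
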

\begin{proof}
By Proposition~\ref{prop:Dahmani}, the coned-off Cayley graph $\gps$ with respect to a finite relative generating set $S$ is a $(\mc G, \mb P)$-graph.
By Theorem~\ref{thm:movings}, a subgroup $\mc H$ satisfies relative quasiconvexity of Definition~\ref{def:ours} for a $(\mc G, \mb P)$-graph $\mc K$ if and only if it does for $\gps$.
\end{proof}

%%%%%%%%%%%%%%%%%%%%   End of main body of article
%
%                             References
%
%   BiBTeX users uncomment the following line:
%
\bibliographystyle{plain}

\bibliography{xbib}

%\begin{thebibliography}
%\end{thebibliography}

\end{document}